\documentclass[10pt]{article}
\usepackage{epsfig}
\usepackage{amssymb,amsmath,amsthm,url}
\usepackage{multirow}
\usepackage{booktabs}
\usepackage{natbib}
\usepackage{hyperref}
\usepackage{setspace}
\usepackage{dsfont}
\usepackage{marginnote}

\usepackage{amsfonts,times,eucal}
\usepackage{graphicx,ifthen}
\usepackage{wrapfig}
\usepackage{latexsym}
\usepackage{color}
\usepackage{mathrsfs}
\usepackage{bm}
\usepackage{bbm}
\usepackage{srctex}
\usepackage{enumitem}
\makeatletter
\newcommand{\mylabel}[2]{#2\def\@currentlabel{#2}\label{#1}}
\makeatother
\pagetotal=29.7cm \textwidth=16.8cm \textheight=23.2cm
\topmargin=-1.8cm \headheight=0.3cm \headsep=1.6cm
\oddsidemargin=0cm \evensidemargin=0cm \arraycolsep=2pt
\columnsep=0.60cm


\linespread{1.2}
\usepackage{mathtools}

\theoremstyle{plain}
\newtheorem{theorem}{Theorem}[section]
\newtheorem{corollary}[theorem]{Corollary}
\newtheorem{lemma}[theorem]{Lemma}
\newtheorem{proposition}[theorem]{Proposition}

\newtheorem{condition}[theorem]{Condition}

\theoremstyle{definition}

\newtheorem{algorithm}[theorem]{Algorithm}

\theoremstyle{remark}

\numberwithin{equation}{section}


\newcommand{\N}{\mathbb{N}}
\newcommand{\R}{\mathbb{R}}

\newcommand{\Z}{\mathbb{Z}}

\newcommand{\p}{\mathbb{P}}
\newcommand{\pc}{\mathbb{P}^*}

\newcommand{\cA}{\mathcal{A}}
\newcommand{\cB}{\mathcal{B}}
\newcommand{\cC}{\mathcal{C}}
\newcommand{\cD}{\mathcal{D}}

\newcommand{\cG}{\mathcal{G}}
\newcommand{\cF}{\mathcal{F}}
\newcommand{\cH}{\mathcal{H}}
\newcommand{\cI}{\mathcal{I}}

\newcommand{\cL}{\mathcal{L}}
\newcommand{\cS}{\mathcal{S}}
\newcommand{\cT}{\mathcal{T}}

\newcommand{\cO}{\mathcal{O}}
\newcommand{\cU}{\mathcal{U}}
\newcommand{\cV}{\mathcal{V}}
\newcommand{\cW}{\mathcal{W}}


\newcommand{\fG}{\mathfrak{G}}

\newcommand{\fS}{\mathfrak{S}}
\newcommand{\fT}{\mathfrak{T}}
\newcommand{\fU}{\mathfrak{U}}
\newcommand{\E}[1]{\mathbbm{E}\left [  #1 \right ]}
\newcommand{\Es}[1]{\mathbbm{E} [  #1 ]}
\newcommand{\Ec}[1]{\mathbbm{E}^*\left [  #1 \right ]}
\newcommand{\Et}[1]{\widetilde{\mathbbm{E} }\left [  #1 \right ]}

\newcommand{\veps}{\epsilon}
\let\veps\epsilon
\renewcommand{\epsilon}{\varepsilon}

\newcommand{\pspace}{(\Omega,\cA,\p)}

\newcommand{\intd}[1]{\mathrm{d}#1}

\newcommand{\norm}[1]{\left\| #1 \right\| }
\newcommand{\norms}[1]{ \| #1 \| }
\newcommand{\scalar}[2]{\left\langle #1,#2 \right\rangle}

\newcommand{\1}[1]{\,\mathds{1}\! \left\{ #1 \right\} }

\newcommand{\md}[2]{\operatorname{d}_{p} (#1,#2 )}
\newcommand{\tiom}{\widetilde{\omega}}
\newcommand{\feps}{f_{\epsilon}}
\newcommand{\geps}{g_{\epsilon}}
\newcommand{\fx}{f_{X}}
\newcommand{\fxt}{f_{X,t}}

\newcommand{\gxt}{g_{X,t}}
\newcommand{\gxtt}{g_{\tilde{X},t}}
\newcommand{\oas}{o_{a.s.}}
\newcommand{\Oas}{\cO_{a.s.}}
\newcommand{\op}{o_{p}}
\newcommand{\Op}{\cO_{p}}
\newcommand{\hatPsiB}{\hat\Psi_{n,b,h}^*}
\newcommand{\hatPsi}{\hat\Psi_{n,h}}
\newcommand{\hatPsib}{\hat\Psi_{n,b}}

\allowdisplaybreaks

\begin{document}

\title{
The autoregression bootstrap for kernel estimates \\ of smooth nonlinear functional time series\footnote{This research was supported by the German Research Foundation (DFG), Grant Number KR-4977/1.}
}

\author{Johannes T. N. Krebs\footnote{Department of Statistics, University of California, Davis, CA, 95616, USA, email: \tt{jtkrebs@ucdavis.edu} }\; \footnote{Corresponding author}
\and
J{\"u}rgen E. Franke\footnote{Department of Mathematics, University of Kaiserslautern, 67653 Kaiserslautern, Germany, email: \tt{franke@mathematik.uni-kl.de} }
}

\date{\today}
\maketitle

\begin{abstract}
\setlength{\baselineskip}{1.8em}
Functional times series have become an integral part of both functional data and time series analysis. This paper deals with the functional autoregressive model of order 1 and the autoregression bootstrap for smooth functions. The regression operator is estimated in the framework developed by \cite{ferraty2004nonparametric} and \cite{ferraty_nonparametric_2007} which is here extended to the double functional case under an assumption of stationary ergodic data which dates back to \cite{laib2010nonparametric}. The main result of this article is the characterization of the asymptotic consistency of the bootstrapped regression operator.\medskip\\
\noindent {\bf Keywords:}  Autoregression bootstrap; Functional data analysis; Functional time series; Functional kernel regression; Hilbert spaces; Nonparametric statistics; Resampling

\noindent {\bf MSC 2010:} Primary: 62M10, 62F40, 62M20; Secondary: 62G09, 60G25
\end{abstract}

\section{Introduction}
The pioneering work of \cite{bosq_linear_2000} and \cite{ramsay_applied_2002,ramsay_functional_2005} encouraged the breakthrough of functional data analysis (FDA). Nowadays FDA is an established field in both theoretical and practical statistical research. Functional data objects which are sequentially observed over time and which feature a dependence pattern are known as functional time series. Typical examples are high-frequent financial data and data concerning the electricity consumption. Here based on a long and almost continuous record, data are segmented into curves over consecutive time intervals.

In this paper we focus on the autoregression bootstrap in the first order functional autoregressive (FAR(1)) model
\begin{align}\label{Eq:FAR1Process}
	X_t = \Psi(X_{t-1}) + \epsilon_t, \quad t\in\Z,
	\end{align}
where both the observations $X_t$ and the innovations $\epsilon_t$ are Hilbert space valued elements (in particular functions). The regression operator $\Psi$ is not necessarily linear and is estimated with kernel methods which date back at least to \cite{ferraty_nonparametric_2007}. So for our autoregression bootstrap we merge the model of Ferraty et al.\ (see \cite{ferraty_nonparametric_2006}) with the ideas from \cite{franke1992bootstrap} and \cite{kreutzberger1993}, which have also been considered in \cite{franke_bootstrap_2002} for the autoregression bootstrap of one-dimensional AR(1) processes. The detailed assumptions of the model are stated in Sections~\ref{Section_NotationsDefinitions} and \ref{Section_AsymptoticNormality}. 

The research of functional autoregressive processes dates back at least to \cite{bosq_linear_2000} who studies the FAR-model for linear operators. Since then there has been extensive study of linear functional time series. We refer the readers to \cite{antoniadis2003wavelet}, \cite{antoniadis2006functional}, \cite{bosq2007general}, \cite{hormann2010weakly}, \cite{horvath2012inference}. Since the approach of \cite{bosq_linear_2000} has certain drawbacks when it comes to implementation, there have been proposed alternative methods of estimating the linear functional model.  \cite{aue_prediction_2015} present a method which is based on functional principal component analysis (FPCA). \cite{hormann2015dynamic} propose another way to address the problem of dimension reduction.

In this paper, we continue with the framework of Ferraty et al.\ which is based on kernel methods in the functional regression model. When dealing with nonparametric models kernel methods have proved to be a powerful tool. So far, the functional regression model $y = \psi(X) + \epsilon$ with a real-valued response variable $y$ has been extensively studied in the kernel regression setting. \cite{ferraty_nonparametric_2007} study the asymptotic properties of the kernel estimator. \cite{masry2005nonparametric} and \cite{delsol_advances_2009} extend this functional regression model with a real-valued response to time series data which is strongly mixing. \cite{laib2010nonparametric,laib2011rates} study the functional kernel regression model for stationary ergodic data, which allows in particular the applicability to non-mixing processes.

So far, the double functional setting has only been investigated in some pioneering work. \cite{ferraty_regression_2012} study the asymptotic properties of the kernel estimator in a double functional regression model for pairs of i.i.d.\ data. \cite{krebs2018doublefunctional} extends their results to pairs of dependent data in the framework of stationary ergodic data of Laib and Louani and also studies the approximating properties of the wild and naive bootstrap in the double functional setting.

The residual bootstrap has received much attention in a model with a real-valued response variable. \cite{gonzalez2011bootstrap} study it in the functional linear regression model. \cite{ferraty_validity_2010} consider both the naive and the wild bootstrap for pairs of i.i.d.\ data in the nonlinear kernel regression setting. \cite{rana_bootstrap_2016} study the same bootstrap variants in a model with an $\alpha$-mixing condition. \cite{politis_kernel_2016} extend these results to the regression bootstrap in the FAR(1)-model. \cite{franke_residual_2016} consider the residual bootstrap for the linear FAR(1)-process based on the framework of \cite{bosq_linear_2000}. \cite{paparoditis2016sieve} proposes a sieve bootstrap for functional time series.

Naturally the FAR(1)-process is a dependent time series, more precisely a Markov process with respect to its natural filtration $(\cF_t)$, where $\cF_t = \sigma(X_0,\ldots,X_t)$. As $\alpha$-mixing can fail even for Markov processes if certain smoothness conditions are not satisfied (see, e.g., \cite{andrews1984non}), we choose the concept of stationary ergodic data from \cite{laib2010nonparametric} to model the dependence within the time series. This concept is quite general. Denote the conditional small ball probabilities of the process by $F_x^{\cF_{t-1}}(h) = \p( \|X_t-x\|\le h | X_{t-1} )$ and $F_x(h) = \p(\|X_t-x\|\le h)$. Then our model mainly relies on the ergodicity of the empirical averages $n^{-1} \sum_{t=1}^n F_x^{\cF_{t-1}}(h) \approx F_x(h)$. Using this result and a multiplicative structure of the small ball probability, we obtain limiting laws for the bootstrapped operator.

The unique contribution of this paper is the study of the autoregression bootstrap in the functional kernel regression model \eqref{Eq:FAR1Process} and its asymptotic normality in the Hilbert space-sense. This means that we construct in the first step a new functional time series $X^*$ from the estimated residuals of the observed time series $X$ based on the kernel estimate of the regression operator $\hat{\Psi}_{n,b}$ (and an oversmoothing bandwidth $b$). In the second step, we consider the asymptotic distribution of $\hat{\Psi}_{n,h}$ at a point $x$ in the function space. For that reason, we use the bootstrapped time series $X^*$ to construct an approximation $\hatPsiB(x)$. We show that after rescaling $\sqrt{n F_x(h)} (\hatPsiB(x)-\hatPsib(x))$ and $\sqrt{n F_x(h)} (\hatPsi(x)-\Psi(x))$ tend to the same Gaussian distribution on the Hilbert space. When compared to the regression bootstrap, the situation here is much more complex. For the regression bootstrap $X^*_t$ coincides with $X_t$ by construction. So, the regressors $X^*_t$ in the bootstrap world trivially satisfy the same assumptions as the real regressors $X_t$ which is quite useful in showing that the same kind of asymptotic properties hold for the bootstrap, too. So in our case, we also have to verify that the constructed time series $X^*$ has the same properties concerning its distribution as the time series $X$.

The remainder of this manuscript is organized as follows. We introduce the model and notation in Section~\ref{Section_NotationsDefinitions}. The detailed assumptions and the main result are
contained in Section~\ref{Section_AsymptoticNormality}. The technical results are given in Section~\ref{Section_Proofs} and in the Appendix~\ref{AppendixA}.

\section{General notation and framework}\label{Section_NotationsDefinitions}
Let $\mathcal{D}$ be a convex and compact subset of $\R^d$ with nonempty interior and let $\nu$ be a finite measure on the Borel-$\sigma$-field $\cB(\cD)$ of $\cD$ which admits a strictly positive density w.r.t.\ the Lebesgue measure. Let $\cH$ be the separable Hilbert space $L^2(\mathcal{D},\cB(\mathcal{D}),\nu)$. $\cH$ is equipped with an orthonormal basis $\{e_k:k\in\N\}$, an inner product $\scalar{\cdot}{\cdot}$ and the corresponding norm $\norm{\,\cdot\,}$. The stationary FAR(1)-process from \eqref{Eq:FAR1Process} is defined on a probability space $\pspace$ and takes values in a subspace $\cS$ of $\cH$ of smooth, real-valued functions. The innovations $\epsilon_t$ are i.i.d, have mean 0 and are also $\cS$-valued.

The regression operator $\Psi\colon \cH\rightarrow\cH$ is not necessarily linear but Lipschitz continuous w.r.t.\ the norm on the Hilbert space, $\norm{\cdot}$. We estimate $\Psi$ with the methods given in the kernel regression framework of \cite{ferraty2004nonparametric} but in this manuscript, we work in a double functional setting similar as in \cite{ferraty_regression_2012}.

Define on the set of continuous functions on $\cD$, $C^0(\cD)$, the norm
\begin{align*}
		&\norm{x}_{1,C^0(\cD)} \coloneqq \norm{x}_{\infty} + \tiom_x, \\
		&\text{ where }  \norm{x}_{\infty} \coloneqq \sup_{u\in \mathcal{D} } |x(u)| \text{ and }   \tiom_x  \coloneqq \sup_{u,v\in \mathcal{D}, u\neq v } \frac{ \left| x(u) - x(v) \right|}{ \norm{u-v}_2 }.
\end{align*}
Moreover, we define sets of smooth functions by 
\[
	\cU(r) \coloneqq \big \{ x\in C^0(\mathcal{D}): \norm{x}_{1,C^0(\cD)} \le r \big\}, \quad r\ge 0.
\]
Then $\cup_{r\in\N} \cU(r)$ is an $\R$-vector space. Set $\fU = \{ \cU(r) : r\ge 0\}$. It is well-known (cf.\ \cite{van2013weak}) that each $\cU=\cU(r)\in\fU$ is totally bounded in $(\cH,\norm{\cdot})$ and there is a constant $C$ which only depends on the dimension $d$ such that the logarithm of the $\veps$-covering number w.r.t.\ $\norm{\cdot}$ on $\cH$ satisfies
\begin{align*}
	\log N(\veps,\cU,\norm{\cdot}) \le C \lambda( \mathcal{D}^{(1)} ) ( \sqrt{\nu(\mathcal{D})} r  \veps^{-1} )^{d},
	\end{align*}
where $\cD^{(1)} = \{u\in\R^d: \; \exists v\in\cD:\, \norm{u-v}_{\infty} < 1\}$ and $\lambda$ is the Lebesgue measure. Throughout this manuscript, we assume that $\cS \subseteq \bigcup_{r \in\N} \cU(r)$ is a linear subspace. In particular, each realization of $\epsilon_t$ and $X_t$ is Lipschitz continuous.

We come to the definition of the estimator of the regression operator. For that reason, let $(r_n:n\in\N)$ be a real-valued sequence which tends to infinity. We give a precise definition of this sequence below and in the appendix. Set $\Delta_{h,t}(x) = K(h^{-1}\norm{X_t-x} )$, where $K$ is a kernel function and $h>0$ is the bandwidth. The estimator $\hatPsi\colon \cS\to\cS$ is defined by
\begin{align*}
		\hat{\Psi}_{n,h}(x) = 
		\begin{cases}
			(\sum_{t=1}^n \Delta_{h,t}(x) )^{-1} \sum_{t=1}^n X_{t+1} \Delta_{h,t}(x) &\text{if } x \in \cS\cap \cU(r_n), \\
			 \overline{X}_n &\text{if }  x \in \cS\setminus \cU(r_n). \\
			\end{cases}
\end{align*}

The subindex $n$ in $\hat\Psi_{n,h}$ emphasizes the dependence of the estimator on the set $\cU(r_n)$. The subindex $h$ clarifies which bandwidth is used. In particular, if we use later an oversmoothing bandwidth $b >> h$, the estimator $\hat\Psi_{n,b}$ is computed on the same sets $\cU(r_n)$, only the bandwidth varies (and the latter can also depend on $n$).

The definition of $\hat\Psi_{n,h}$ on the complement of $\cU(r_n)$ in $\cS$ is technical because for the bootstrap procedure to work, we need the estimator to be well-behaved on regions of $\cS$ where the sample data is relatively sparse.

We describe our autoregressive bootstrap procedure. Let $X_0,\ldots,X_{n+1}$ be a given functional time series. Set $\cI_n = \{1\le j \le n: X_{j-1}\in \cU(r_n) \}$.

Then we write $\epsilon'_t$ for the centered residuals which are estimated with $\hat{\Psi}_{n,b}$ (for an oversmoothing bandwidth $b$), i.e.,
\begin{align}\label{DefCenteredResiduals}
	\epsilon'_t = \epsilon'_{n,t} = \hat\epsilon_t - \bar\epsilon, \text{ where } \hat\epsilon_t = \hat\epsilon_{n,t} = X_t - \hat{\Psi}_{n,b}(X_{t-1} ), \quad t=1,\ldots,n+1 \text{ and } \bar{\epsilon} = |\cI_n|^{-1} \sum_{t\in\cI_n} \hat\epsilon_{n,t}
	\end{align}
is the average of the innovations which are estimated with a relatively high precision. Then, $\sum_{t\in\cI_n} \epsilon'_t = 0$. Note that we actually, we have a triangular array of centered residuals $\epsilon'_{n,t}$ but we only write $\epsilon'_t$ for reasons of simplicity.

Denote the random variables which formalize the resampling scheme of the autoregression bootstrap by $\delta_{t,j} = \delta_{t,j}^{(n)}$ ($1\le t\le n$, $j\in\cI_n$). Let $\kappa^{(n)}_t$ be independent and uniformly distributed on the random set $\cI_n$ for $t=1,\ldots,n$. Set
\begin{align*}
		&\delta_{t,j} = \delta_{t,j}^{(n)} = \mathds{1}\{ \kappa^{(n)}_t = j \}, \quad 1\le t \le n, j\in\cI_n.
\end{align*}
Note that $\sum_{j\in\cI_n} \delta^{(n)}_{t,j} = 1$ for all $1\le t\le n$. The procedure for generating the pseudo-time series is the following.
\begin{algorithm}[Generation of the pseudo-data for the autoregression bootstrap]\
	\begin{itemize}\setlength\itemsep{0em}

	\item [(1)] Set  $\epsilon^*_{t} = \epsilon^*_{n,t} = \sum_{j\in\cI_n} \delta^{(n)}_{t,j} \epsilon'_{n,j}$ for $t=1,\ldots,n+1$.

\item [(2)] Generate the bootstrap data $X^*_t=X^*_{n,t}$, for $t=1,\ldots,n+1$ recursively by
$
	X^*_{n,t} = \hatPsib( X^*_{n,t-1} ) + \epsilon^*_{n,t}$, for $t=1,\ldots,n+1$ and for a suitable initial value $X^*_{n,0} = X^*_0$.
\end{itemize}
\end{algorithm}

As $\cS$ is a linear space, the elements of the bootstrapped process are also smooth functions in $\cS$.
We assume that the starting value $X_0^*$ of the bootstrapped process is bounded uniformly in the data, i.e., $\sup_{n\in\N} \E{ \|X^*_0\| \,|\, X_0,\ldots,X_n}$ is $a.s.$ finite.
For instance, a suitable choice is the initial value of the time series $X^*_0 = X_0$.

In practice and if $n>>1$ is large, the choice of $X^*_0$ is of minor importance because of the exponentially decreasing dependence in the stationary FAR(1)-process.

\vspace{1em}
We introduce some notation. We write $\norm{Z}_{p,\p'}$ for the $p$-norm of a real-valued random variable $Z$ defined on a probability space $(\Omega',\cA',\p')$. We denote 
the conditional expectation given the data $X_0,\ldots,X_n$ by $\Ec{\cdot} = \E{\cdot| X_0,\ldots,X_n}$. In the same way, we write $ \p^*_n( A ) \coloneqq \E{ \1{A} \,|\, X_0,\ldots,X_n }$ for the probability measure given the data and $\cL^*(U)$ for the distribution of a random variable $U$ conditional on the data $X_0,\ldots,X_n$.

Denote the projections $\scalar{\Psi(\cdot)}{e_k}$, $\scalar{X_t}{e_k}$ and $\scalar{\epsilon_t}{e_k}$ by $\psi_k(\cdot)$, $X_{k,t}$ and $\epsilon_{k,t}$ for each $k,t\in\N$. 

Let $V,W$ be two normed linear spaces. If $\Gamma\colon V\to W$ is linear, we write $\Gamma.x$ for the image of $x\in V$ under $\Gamma$. If $\Gamma$ is also continuous, its norm is $\norm{\Gamma}_{\cL(V,W)}  = \sup\{ \|\Gamma.x\|_W: x\in V, \|x\|_V \le 1 \} < \infty$ and we write $\Gamma \in \cL(V,W)$.

If $\zeta$ is a real-valued (resp.\ $\cH$-valued) random function which satisfies $\zeta(u)/u \rightarrow 0$ in probability (resp.\ $\norm{\zeta(u)}/u \rightarrow 0$ in probability) as $u$ converges to some limit in $[-\infty,\infty]$, we write $\zeta(u)= \op(u)$. In the same way, we say that $\zeta(u)$ is $\Op(u)$ if $\zeta(u)/u$ (resp.\ $\norm{\zeta(u)}/u$) is bounded in probability as $u$ converges to some limit in $[-\infty,\infty]$. Moreover, we agree to use the analogue notation for $a.s.$ and $a.c.$ convergence.
	
	A Borel probability measure $\mu$ on $\cH$ is a Gaussian measure if and only if its Fourier transform $\hat\mu$ is given by
	$
			\hat\mu(x) \equiv \exp(	i\scalar{m}{x} - \scalar{\cC x}{x}/2 ),
	$
	where $m\in\cH$ and $\cC$ is a positive symmetric trace class operator on $\cH$. $m$ is the mean vector and $\cC$ is the covariance operator of $\mu$. We also write $\fG(m,\cC)$ for this measure $\mu$.
	
We write $\Rightarrow$ for weak convergence on $\cH$, i.e., $\mu_n\Rightarrow\mu$ for probability measures $\mu$ and $(\mu_n:n\in\N)$ if and only if $\int_\cH F\intd{\mu_n}\rightarrow\int_\cH F \intd{\mu}$ as $n\rightarrow \infty$ for all $F\colon\cH\rightarrow\R$ which are bounded and Lipschitz continuous w.r.t.\ $\|\cdot\|$ on $\cH$ and the Euclidean norm on $\R$. Write $F_x(h)$ for the probability $\p(\norm{X_t-x}\le h)$ for $x\in\cH$ and $h>0$. Similarly, $F_x^{\cF_{t-1}}(x) = \p(\|X_t-x\|\le h \,|\, \cF_{t-1})$, where $\cF_t = \sigma(X_0,\ldots,X_t)$. Moreover, $U(x,h)$ is the $h$-neighborhood of $x$ w.r.t.\ $\norm{\cdot}$ in $\cH$.

\section{Asymptotic normality}\label{Section_AsymptoticNormality}
We give the detailed assumptions on the functional autoregression model; these concern the distributional properties of the process $X$, the characteristics of the regression operator and the kernel function as well as the properties of the small ball probability, the bandwidth choices and the covering assumptions of the function space.

We consider the bootstrap of the regression operator at a fixed point $x\in \cU(r_1)$ in the function space.

	\begin{itemize}
	\item [\mylabel{C:Kernel}{(A1)}]	The kernel $K$ has support $[0,1]$, has a continuous derivative $K'\le 0$ and $K(1)>0$.
	
	\item [\mylabel{C:Operator}{(A2)}] 
	\begin{itemize}
	\item  [\mylabel{C:Operator1}{(i)}] $\Psi\colon \cS \to\cH$ is Lipschitz w.r.t.\ $\norm{\cdot}$, i.e., $\norm{\Psi(x)-\Psi(y)}\le L_\Psi \norm{x-y}$, $L_{\Psi}<1$. $\sup_{x\in\cS}\tiom_{\Psi(x)} <\infty$.
		\item [\mylabel{C:Operator2}{(ii)}] 
For all $k\in\N$ and some $\delta>0$ the operator $\psi_k$ admits a linear approximation in a $\delta$-neighborhood of $x$ in the sense that for all $y,z\in U(x,\delta)$, $\psi_k(z)-\psi_k(y) = \intd{\psi_k}(y).(z-y) + R_k(z,y)$ for a bounded linear operator $\intd{\psi_k}(y)\in\cL(\cH,\R)$ and a remainder which satisfies $| R_k(z,y) | \le L_k \norm{z-y}^{1+\alpha}$ for an $\alpha>0$. 

$\sup_{y\in U(x,\delta)} \sum_{k\in\N} \norm{ \intd{\psi_k}(y)}_{\cL(\cH,\R)}^2 < \infty$, $\sum_{k\in\N} L_k^2< \infty$ and $\sum_{k\in\N} \|\intd{\psi_k}(y) - \intd{\psi_k}(x) \|^2_{\cL(\cH,\R)} = \cO( \norm{z-y}^{2(1+\alpha)} )$.
		\item [\mylabel{C:Operator3}{(iii)}] 
Define functions $\varphi_x(\| \epsilon_t-x \|) \coloneqq \E{ \epsilon_t - x |\| \epsilon_t-x \|}$ and $\tilde\varphi_x(\| X_t-x \|) \coloneqq \E{ X_t - x |\| X_t-x \|}$ for $x\in\cH$. Then $\varphi_x(u) = \intd{\varphi}_x(0).u + \bar{R}_x(u)$ and $\tilde\varphi_x(u)= \intd{\tilde\varphi}_x(0).u + \tilde{R}_x(u)$, where $\intd{\varphi}_x(0),\intd{\tilde\varphi}_x(0)\in\cH$ and $\bar{R}_x(u)=\cO(u^{1+\alpha})$ and $\tilde{R}_x(u)=\cO(u^{1+\alpha})$ uniformly in $x\in\cS$ as $u\rightarrow 0$. Moreover, the map $x\mapsto \intd{\varphi}_x(0)$ from $\cS$ to $\cH$ is H{\"o}lder continuous with exponent $\alpha$ and uniformly bounded.

\end{itemize}

\item [\mylabel{C:SmallBallProbability}{(A3)}]
\begin{itemize}
\item [\mylabel{C:SmallBallProbability1}{(i)}] $\p(\| \epsilon_t - y \| \le u) \le \p(\| \epsilon_t - z_0\| \le u)$ for all $y\in\cH$ and $u$ in a neighborhood of 0 for a certain $z_0\in\cH$. $\cS \ni y \mapsto \p( \|\epsilon_t-y\|\le u )/\p( \| \epsilon_t-x \|\le u )$ is H{\"o}lder continuous with H{\"o}lder exponent $\alpha$ (from \ref{C:Operator}~\ref{C:Operator2}) uniformly in $u$ in a neighborhood of 0. For all $y\in\cS$ and $u,\veps$ sufficiently small there is a $\tilde{L}\in\R_+$ such that
\[
		[ \p(\| \epsilon_t-y \| \le u+\veps ) - \p(\| \epsilon_t-y \| \le u-\veps ) ] \, \p(\| \epsilon_t-y \| \le u)^{-1} \le \tilde{L} \, \veps u^{-1}.
\]
\end{itemize}
$\p( \| \epsilon_t-x \|\le h ) = \feps(x)\phi(h)+\geps(h,x)$, where $\feps \ge 0$ and $\geps$ are random functionals with the property $\sup_{x\in\cU} |\geps(h,x)| = o(\phi(h))$ ($h\rightarrow 0$) for all $\cU\in\fU$ and for all $x\in \cS$.

 Set	$\fxt(x)\coloneqq \feps(x-\Psi(X_{t-1}))$, $\gxt(h,x)\coloneqq \geps(h,x-\Psi(X_{t-1}))$ and $\fx(x) \coloneqq \E{ \feps(x-\Psi(X_t)) }$. Assume that $\inf_{x\in \cU\cap \cS} \fx(x)> 0$ $a.s.$ for all $\cU\in\fU$. Then due to the Markov property of the FAR(1)-process	$F_x^{\cF_{t-1}}(h) = \feps(x-\Psi(X_{t-1})) \phi(h) +  \geps(h,x-\Psi(X_{t-1})) = \fxt(x) \phi(h) + \gxt(h,x)$.

\begin{itemize}\setlength\itemsep{0em}
\item [\mylabel{C:SmallBallProbability2}{(ii)}] $ \sup_{x\in\cU}| n^{-1} \sum_{t=1}^n \fxt(x) - \fx(x)| + n^{-1} \sum_{t=1}^n |\gxt(h,x)| \phi(h)^{-1} = \oas(1)$ for all $\cU\in\fU$. 

\item [\mylabel{C:SmallBallProbability3}{(iii)}] $\sup_{u\in [0,1] }	| \phi(hu)/\phi(h) - \tau_0(u) | = o(1)$ as $ h\rightarrow 0$ for some $\tau_0\colon [0,1]\to [0,1]$, where $\tau_0$ is continuous in a neighborhood of 0 and $\tau_0(0)=0$.
\end{itemize}

\item [\mylabel{C:Process}{(A4)}]
\begin{itemize}
\item [\mylabel{C:Process1}{(i)}]
$\sum_{k>m} \mathbb{E}[ |X_{k,t}|^2] \vee \sum_{k>m} \mathbb{E}[ |\epsilon_{k,t}|^2  ] \vee \sum_{k>m} \langle x,e_k \rangle^2 \le a_0\exp(-a_1 m)$, $\forall m\in\N$ for certain $a_0,a_1 \in\R_+$.
\item [\mylabel{C:Process2}{(ii)}]  $\Es{ \norm{\epsilon_t}^{m }} = \cO( m! \tilde{H}^{m-2})$ and $\tiom_{\epsilon_t} \le C < \infty$ $a.s.$ for some $\tilde{H}>1$ and some $C\in\R_+$.
\end{itemize}

\item [\mylabel{C:Bandwidth}{(A5)}]
$h,b\rightarrow 0$, $h/b\rightarrow 0$ as $n\rightarrow\infty$ such that $h(n \phi(h))^{1/2} \rightarrow L^*\in\R_+$, $(n\phi(h))^{1/2} (\log n)^{-(2+\alpha)} \rightarrow \infty$ (where $\alpha$ from \ref{C:Operator}~\ref{C:Operator2}), $[\phi(h)/\phi(b) ] (\log n)^2 = o(1)$, $b(\log n)^{1/2}=o(1)$, $b^{1+\alpha} (n\phi(h))^{1/2}=o(1)$.
\end{itemize}

The conditions on the kernel function and the small ball probability allow us to define the moments for $x\in\cS$
\begin{align*}
			M_{0} \coloneqq K(1) - \int_{0}^1 (s K(s))' \tau_0(s) \intd{s} \text{ and }		M_{j} \coloneqq K(1)^j  - \int_{0}^1 (K(s)^j)' \tau_0(s) \intd{s} > 0, \quad j \ge 1.
\end{align*}
It follows from the assumptions in Condition~\ref{C:SmallBallProbability} that there is a $D(z_0)\in\R_+$ such that $\feps(x) \le \feps(z_0) \le D(z_0)$ and $|\geps(h,x)|/\phi(h) \le \feps(z_0) + |\geps(h,z_0)|/\phi(h) \le D(z_0)$ $a.s.$ for all $h$ in a neighborhood of $0$ for $x\in\cS$. Moreover,
\begin{align}\label{SmallBallLipschitz}
				| \p(\| \epsilon_t-y_1 \| \le u ) - \p(\| \epsilon_t-y_2 \| \le u ) | \, \p(\| \epsilon_t-x \| \le u)^{-1} \le \tilde{L} \|y_1-y_2\|^\alpha\wedge \tilde{B},
\end{align}
for some $\tilde{B}\in\R_+$ and $\alpha$ from Condition~\ref{C:Operator} and for all $u$ in a neighborhood of 0.

Condition~\ref{C:Kernel} on the kernel function $K$ is standard in the functional kernel regression model of Ferraty et al. A Lipschitz constant which is smaller than 1 in Condition~\ref{C:Operator}~\ref{C:Operator1} is necessary to obtain an exponential decay of the influence of the past values of the time series. Condition~\ref{C:Operator}~\ref{C:Operator2} and \ref{C:Operator3} are important for the convergence results, in particular, for the limiting expression of the bias.

Condition~\ref{C:SmallBallProbability} on the small ball probability function appears in similar version also in \cite{laib2010nonparametric} and \cite{laib2011rates}, who also give examples of functional processes which satisfy these criteria.

Condition~\ref{C:Process} is a moment condition. Note that we require the error terms and the observations $X_t$ to have uniformly bounded Lipschitz constants. However, we do not require the functions $\epsilon_t$ and $X_t$ to be uniformly bounded.

Condition~\ref{C:Bandwidth} on the bandwidth is similar as in \cite{ferraty_validity_2010}.

\vspace{1em}
Before we proceed, we need two technical conditions on the estimator $\hatPsib(\cdot)$, the first is a covering condition, the second concerns dependence relations. 

Let $\overline{L}\in\R_+$, $\kappa_n\in\N_+$, $\ell_n > 0$ such that $\log \kappa_n \le \overline{L}\, b h^{-1} \log n$ and $\ell_n=o(b (n\phi(h))^{-1/2}  (\log n)^{-1} )$. We say a set $\cW \subseteq \cS$ satisfies ($\ast$) at $n$ if there are points $z_{n,1},\ldots,z_{n,\kappa_n}\in\cH$ such that $\cW \subseteq \cup_{i=1}^{\kappa_n} U(z_{n,i},\ell_n)$.

Set
$
		\cV_n = \{ \hatPsib(y_1)-\hatPsib(y_2)+X_t-\bar\epsilon \;|\; y_1,y_2\in \cS, 1\le t\le n+1 \} \cap U(x,h).
$
\begin{itemize}
\item [\mylabel{C:Covering}{(A6)}]
$\p( \cV_n \text{ satisfies ($\ast$) at $n$} ) \rightarrow 1$.
\end{itemize}
The intention of Condition~\ref{C:Covering} is that the complexity of the relevant set of functions in the $h$-neighborhood of $x$ w.r.t.\ $\|\cdot\|$ decreases sufficiently fast as $h$ tends to zero. Covering conditions as \ref{C:Covering} are frequently used in this context, see also \cite{ferraty_validity_2010} for a variant where the random set $\cV_n$ is replaced by the underlying space which in the present case is $\cS$.

Furthermore, define the random variable $A_{n,t} =  \Psi(X_{t-1})-\hat\Psi_{n,b}(X_{t-1})-\bar{\epsilon}\in\cS$ for $1\le t \le n$ and $n\in\N$. Let $\mathbb{Q}_{n,t}$ be the probability distribution of $(\epsilon_t,A_{n,t},\1{t\in\cI_n})$ on $\cS^2\times \{0,1\}$ and let $\tilde{\mathbb{Q}}_{n,t} = \p_{\epsilon_t}\otimes\p_{ (A_{n,t},\1{t\in\cI_n})}$ be the product measure of $\epsilon_t$ and $(A_{n,t},\1{t\in\cI_n})$ on $\cS^2\times \{0,1\}$. We need another technical condition regarding the estimates.
\begin{itemize}
\item [\mylabel{C:BetaMixing}{(A7)}] The $\beta$-mixing coefficient $\beta(\epsilon_t,(A_{n,t},\1{t\in \cI_n}))$ is of order $o(\phi(h) )$. $\mathbb{Q}_{n,t} \ll \tilde{\mathbb{Q}}_{n,t}$ such that the Radon-Nikod{\'y}m derivative is uniformly essentially bounded in that
\[
	g_{n,t} = \frac{\intd{\mathbb{Q}_{n,t}}}{\intd{\tilde{\mathbb{Q}}_{n,t}}} \text{ satisfies } \sup_{\substack{n\in\N\\ 1\le t\le n}} \|g_{n,t}\|_{\infty, \tilde{\mathbb{Q}}_{n,t} } < \infty.
	\]
	Moreover, there are $L_{n,t}\in\R_+$, $1\le t \le n$, $n\in\N$ such that $n^{-1} \sum_{t=1}^n L_{n,t}=o(1)$ and $\sup_{n\in\N, 1\le t\le n} L_{n,t} < \infty$ as well as for all $h$ in a neighborhood of 0
	\begin{align}
	\begin{split}\label{C:DiffConditionalDistribution1}
			&\sup_{z\in\cS} \: \phi(h)^{-1} \Big|\p(\|\epsilon_t-z\|\le h \,|\, A_{n,t}, X_1, \ldots, X_{t-1})- \p(\|\epsilon_t-z\|\le h )		\Big| \le L_{n,t} \; a.s.,  \end{split}\\
			\begin{split}\label{C:DiffConditionalDistribution2}
			&\sup_{k\in\N} \: \sup_{z\in\cS} \: \phi(h)^{-1} \Big| \E{K( h^{-1} \|\epsilon_t-z\|) (h^{-1}\langle \epsilon_t-z, e_k\rangle) \,|\, A_{n,t},X_1, \ldots, X_{t-1}} \\
			&\qquad\qquad\qquad\qquad -  \E{K( h^{-1} \|\epsilon_t-z\|) (h^{-1}\langle \epsilon_t-z, e_k\rangle)}		\Big| \le L_{n,t} \; a.s.\end{split}
	\end{align}
	\end{itemize}
The first requirement in Condition~\ref{C:BetaMixing} is satisfied if the $\beta$-mixing coefficient is $o(n^{-1})$. As by construction $\epsilon_t$ and $X_{t-1}$ are independent, this coefficient can be considered as the variation of $\epsilon_t$ with $\Psi(X_{t-1})-\hat\Psi_{n,b}(X_{t-1})$ and $\bar\epsilon$. The first part of Condition~\ref{C:BetaMixing} allows us to exchange the measure $\mathbb{Q}_{n,t}$ with $\tilde{\mathbb{Q}}_{n,t}$ at negligible costs, see also Lemma~\ref{L:BetaMixing}. \eqref{C:DiffConditionalDistribution1} roughly means the the averaged difference between the conditional and unconditional distribution of $\|\epsilon_t-x\|$ vanishes $a.s.$ \eqref{C:DiffConditionalDistribution2} has a similar implication.

We come to the construction of the sequence $(r_n: n\in\N)$ which determines which of estimated residuals are used in the resampling. $(r_n: n\in\N)$ satisfies the three technical conditions
\begin{align}
&\sup_{x\in \cU(r_n)} \| \hatPsib(x)-\Psi(x) \| = \op(1),\label{C:UniformConv1}\\
\begin{split}\label{C:UniformConv2}
&\sup_{z\in\cU(r_n)} \sup_{s\in [0,1]} (|\cI_n|\, \phi(h))^{-1} \Big| \sum_{t=1}^n \1{\|z+\epsilon'_t-x\|\le hs}\1{t\in\cI_n} \\
&\qquad\qquad\qquad\qquad -\p(\|z+\epsilon'_t-x\|\le hs, t\in\cI_n) \Big| = \op(1), \end{split}\\
\begin{split}\label{C:UniformConv3}
&\sup_{z\in\cU(r_n)} (|\cI_n|\,\phi(h))^{-1} \Big\| \sum_{t=1}^n K(h^{-1} \|z+\epsilon'_t-x\| ) (h^{-1} (z+\epsilon'_t-x) ) \1{t\in\cI_n} \\
&\qquad\qquad\qquad\qquad - \E{K(h^{-1} \|z+\epsilon'_t-x\| ) (h^{-1} (z+\epsilon'_t-x) ) \1{t\in\cI_n} }  \Big\| = \op(1).\end{split}
\end{align}
We show in Appendix~\ref{AppendixA} the existence of such a $(r_n: n\in\N)$ in the case where the dimension $d$ of $\cD$ is 1.

The main result of this manuscript concerns the asymptotic normality of the bootstrapped regression operator $\hatPsiB$ at the point $x$.
\begin{align*}
		\hatPsiB (x) \coloneqq 
		\begin{cases}
			\sum_{t=1}^n X^*_{t+1} \Delta^*_{h,t}(x) \big/ \sum_{t=1}^n \Delta^*_{h,t}(x)  &\text{if }  x\in \cS\cap\cU(r_n), \\
			\overline{X}_n &\text{if }  x \in \cS\setminus \cU(r_n),
			\end{cases}
			\end{align*}
where $\Delta^*_{h,t}(x) = K(h^{-1} \|X^*_{t} - x \| )$. We introduce the two families of probability distributions on $\cH$
\begin{align*}
		&\mu_{x,n} = \cL\big( \sqrt{n \phi(h) } (\hatPsi(x)- \Psi(x) ) \big),\\
		&\mu^*_{x,n} = \cL^*\big( \sqrt{n \phi(h)} (\hatPsiB(x)-\hatPsib(x) ) \big)
		\end{align*}
for $n\in\N_+$. Moreover, define $\bar{B}(x) = L^* \,M_{0}\fx(x)\, \{\sum_{k\in\N} \intd{\psi}_k(x) . \intd{\varphi_x(0)}  e_k \} \in\cH$, where $L^*$ is defined in \ref{C:Bandwidth}.

\begin{theorem}\label{BootstrapDistribution}\
Let $\cC_x$ be the unique covariance operator on $\cH$ which is characterized by the condition
$
		\scalar{\cC_x v}{v} = M_2 (M_1^2 f_X(x) )^{-1} \mathbb{E}[\scalar{\epsilon_t}{v}^2 ]
$
for all $v\in\cH$. Then
\begin{align*}
		&\cL(\sqrt{n \phi(h) } (\hatPsi(x)- \Psi(x) ) ) \Rightarrow \fG( \bar{B}(x), \cC_x),\\
		&\cL^*(\sqrt{n \phi(h)} (\hatPsiB(x)-\hatPsib(x) ) ) \Rightarrow \fG( \bar{B}(x), \cC_x) \text{ in probability.}
		\end{align*}
Furthermore, let $F\colon \cH\rightarrow\R$ be a bounded Lipschitz functional. Then $\big| \int_{\cH} F \intd{\mu^*_{x,n}} - \int_{\cH} F \intd{\mu_{x,n}} \big| \rightarrow 0$ in probability. In particular, for the projection in a specific direction $v \in\cH$
\begin{align*}
		\sup_{z\in\R} \left| \p^*\left( \sqrt{n \phi(h)} \scalar{\hatPsiB(x)-\hatPsib(x)}{v} \le z \right) - \p\left( \sqrt{n \phi(h)} \scalar{\hatPsi(x)- \Psi(x)}{v} \le z \right) \right| = o_p(1).
\end{align*}
\end{theorem}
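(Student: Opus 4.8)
The plan is to prove the two marginal limit laws separately---the central limit theorem for $\hatPsi(x)$ in the ``real world'' and the conditional central limit theorem for $\hatPsiB(x)$ in the ``bootstrap world''---and then to obtain the last two assertions by a soft comparison argument, exploiting that both rescaled quantities converge to the \emph{same} Gaussian law $\fG(\bar{B}(x),\cC_x)$. Since $x$ is fixed in $\cU(r_1)\subseteq\cU(r_n)$, both estimators are always given by the kernel quotient; write $\hat f_n(x)=(n\phi(h))^{-1}\sum_{t=1}^n\Delta_{h,t}(x)$ and $\hat f^*_n(x)=(n\phi(h))^{-1}\sum_{t=1}^n\Delta^*_{h,t}(x)$. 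Using \eqref{Eq:FAR1Process} and the recursion $X^*_{t}=\hatPsib(X^*_{t-1})+\epsilon^*_{t}$ one gets the parallel decompositions
\begin{gather*}
	\sqrt{n\phi(h)}\,(\hatPsi(x)-\Psi(x))=\hat f_n(x)^{-1}\bigl(S_n(x)+B_n(x)\bigr),\\
	\sqrt{n\phi(h)}\,(\hatPsiB(x)-\hatPsib(x))=\hat f^*_n(x)^{-1}\bigl(S^*_n(x)+B^*_n(x)\bigr),
\end{gather*}
with variance parts $S_n(x)=(n\phi(h))^{-1/2}\sum_t\epsilon_{t+1}\Delta_{h,t}(x)$, $S^*_n(x)=(n\phi(h))^{-1/2}\sum_t\epsilon^*_{t+1}\Delta^*_{h,t}(x)$ and bias parts $B_n(x)=(n\phi(h))^{-1/2}\sum_t(\Psi(X_t)-\Psi(x))\Delta_{h,t}(x)$, $B^*_n(x)=(n\phi(h))^{-1/2}\sum_t(\hatPsib(X^*_t)-\hatPsib(x))\Delta^*_{h,t}(x)$.

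For the real-world CLT I would treat the three pieces as follows. Denominator: integration by parts against the kernel, using \ref{C:Kernel} and \ref{C:SmallBallProbability}~\ref{C:SmallBallProbability3}, gives $\E{\Delta_{h,t}(x)\mid\cF_{t-1}}=(M_1\fxt(x)+o(1))\phi(h)$ a.s., and then \ref{C:SmallBallProbability}~\ref{C:SmallBallProbability2} together with a martingale strong law (increments $L^2$-controlled via \ref{C:SmallBallProbability} and \ref{C:Process}) yields $\hat f_n(x)\to M_1 f_X(x)>0$ a.s. Bias: projecting onto $e_k$, apply on $\{\Delta_{h,t}(x)\neq0\}\subseteq U(x,\delta)$ the first-order expansion \ref{C:Operator}~\ref{C:Operator2} and the expansion of $\varphi_x$ from \ref{C:Operator}~\ref{C:Operator3}; the remainder is controlled by $\sum_k L_k^2<\infty$ and the kernel moment $M_0$, and $h\sqrt{n\phi(h)}\to L^*$ identifies $\hat f_n(x)^{-1}B_n(x)\to\bar{B}(x)$ in probability. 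Variance: as $\epsilon_{t+1}\perp\cF_t$, $S_n(x)$ is an $\cH$-valued martingale-difference array, and I would invoke a Hilbert-space martingale CLT, checking (i) $(n\phi(h))^{-1}\sum_t\Delta_{h,t}(x)^2\,\E{\scalar{\epsilon_t}{v}^2}\to M_2 f_X(x)\,\E{\scalar{\epsilon_t}{v}^2}$, which after dividing by $\hat f_n(x)^2\to M_1^2 f_X(x)^2$ is precisely $\scalar{\cC_x v}{v}$; (ii) a Lindeberg condition from \ref{C:Process}~\ref{C:Process2}; (iii) tightness on $\cH$ from the uniform eigen-tail decay \ref{C:Process}~\ref{C:Process1}. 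Slutsky on $\cH$ then gives $\mu_{x,n}\Rightarrow\fG(\bar{B}(x),\cC_x)$; this part is essentially the double-functional kernel-regression CLT adapted to the FAR$(1)$ dependence structure.

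The bootstrap CLT is the core of the proof. Working conditionally on $X_0,\dots,X_n$, with $\cF^*_t=\sigma(X^*_0,\epsilon^*_1,\dots,\epsilon^*_t)$, the innovation $\epsilon^*_{t+1}$ is independent of $\cF^*_t$ and $\Ec{\epsilon^*_{t+1}}=|\cI_n|^{-1}\sum_{j\in\cI_n}\epsilon'_j=0$ by \eqref{DefCenteredResiduals}, so $S^*_n(x)$ is a $(\cF^*_t)$-martingale-difference array. Everything hinges on showing that, with probability tending to $1$, the bootstrap chain $X^*$ inherits the small-ball structure of $X$, i.e.\ that $\p^*_n(\|\epsilon^*_t-y\|\le h)$ and the attendant conditional averages mimic their $X$-counterparts; this is where \ref{C:Covering}, \ref{C:BetaMixing} and \eqref{C:UniformConv1}--\eqref{C:UniformConv3} are used. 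Writing $\epsilon'_t=\epsilon_t+A_{n,t}$ with $A_{n,t}=\Psi(X_{t-1})-\hatPsib(X_{t-1})-\bar\epsilon$, the $\beta$-mixing bound of \ref{C:BetaMixing} replaces $\mathbb{Q}_{n,t}$ by the product $\tilde{\mathbb{Q}}_{n,t}$ at cost $o(\phi(h))$; then \eqref{C:DiffConditionalDistribution1}--\eqref{C:DiffConditionalDistribution2}, the H{\"o}lder/Lipschitz estimates \ref{C:SmallBallProbability}~\ref{C:SmallBallProbability1} and \eqref{SmallBallLipschitz}, and \eqref{C:UniformConv1} reduce the local behaviour of $\epsilon'_t$ near $x-\hatPsib(X^*_{t-1})$ to that of $\epsilon_t$ near $x-\Psi(X^*_{t-1})$, after which \eqref{C:UniformConv2}--\eqref{C:UniformConv3} (uniformly over $z\in\cU(r_n)$) turn resampled sums into conditional means. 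Granting this, $\hat f^*_n(x)\to M_1 f_X(x)$ in probability; the conditional covariance of $S^*_n(x)$ converges to $M_1^2 f_X(x)^2\,\scalar{\cC_x v}{v}$ because the empirical second moments of $\{\epsilon'_j\}_{j\in\cI_n}$ are consistent for $\E{\scalar{\epsilon_t}{v}^2}$, the Lindeberg condition follows by transferring \ref{C:Process}~\ref{C:Process2} to the residuals, and tightness from \ref{C:Process}~\ref{C:Process1}; hence $\hat f^*_n(x)^{-1}S^*_n(x)\Rightarrow\fG(0,\cC_x)$ in probability. For the bootstrap bias I would split $\hatPsib(X^*_t)-\hatPsib(x)=(\Psi(X^*_t)-\Psi(x))+[(\hatPsib-\Psi)(X^*_t)-(\hatPsib-\Psi)(x)]$: the first summand contributes $\bar{B}(x)$ exactly as in the real world, while for the second the crude bound \eqref{C:UniformConv1} is too weak once multiplied by $\sqrt{n\phi(h)}$, so I would instead observe that the relevant increments lie in the set $\cV_n$ and apply \ref{C:Covering}---$\cV_n$ is covered by $\kappa_n$ balls of radius $\ell_n=o(b(n\phi(h))^{-1/2}(\log n)^{-1})$ with $\log\kappa_n\le\overline{L}\,bh^{-1}\log n$---so that, combining this entropy bound with \eqref{C:UniformConv1} and the bandwidth relations $b^{1+\alpha}(n\phi(h))^{1/2}=o(1)$, $[\phi(h)/\phi(b)](\log n)^2=o(1)$, $b(\log n)^{1/2}=o(1)$ of \ref{C:Bandwidth}, the scaled remainder is $\op(1)$. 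Slutsky then gives $\mu^*_{x,n}\Rightarrow\fG(\bar{B}(x),\cC_x)$ in probability. I expect this last step---orchestrating \ref{C:Covering}, \ref{C:BetaMixing} and \eqref{C:UniformConv1}--\eqref{C:UniformConv3}, in effect a bound on the modulus of continuity of $\hatPsib-\Psi$ at scale $h$ along the random bootstrap path---to be the main obstacle.

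The last two assertions are then soft. Both $\mu_{x,n}$ and $\mu^*_{x,n}$ converge weakly on $\cH$ to $\fG(\bar{B}(x),\cC_x)$ (the second only in probability), so for any bounded Lipschitz $F\colon\cH\to\R$ the integrals $\int_\cH F\intd{\mu_{x,n}}$ and $\int_\cH F\intd{\mu^*_{x,n}}$ both converge (the latter in probability) to $\int_\cH F\intd{\fG(\bar{B}(x),\cC_x)}$, whence their difference is $\op(1)$. Finally, for $v\in\cH$ with $\scalar{\cC_x v}{v}>0$, the push-forward of $\fG(\bar{B}(x),\cC_x)$ under $z\mapsto\scalar{z}{v}$ is a non-degenerate univariate Gaussian, which has a continuous distribution function, so P{\'o}lya's theorem upgrades the weak convergence of the one-dimensional projections of $\mu^*_{x,n}$ and $\mu_{x,n}$ to convergence in supremum (Kolmogorov) distance; subtracting the two uniform limits gives the stated $o_p(1)$ bound.
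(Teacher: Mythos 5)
Your proposal matches the paper's proof strategy in all essentials: the same bias/variance decomposition with the $\hat f_h$ denominator, a Hilbert-space martingale-difference CLT for the variance term with the three Lindeberg-type conditions, transfer of the small-ball structure to the bootstrap path via \ref{C:BetaMixing} and \eqref{C:UniformConv1}--\eqref{C:UniformConv3}, the covering condition \ref{C:Covering} to kill the bootstrap bias remainder, and then the soft weak-convergence comparison and P{\'o}lya's theorem for the last two assertions. The paper simply packages these ingredients into Lemmas~\ref{BootstrapHatF}, \ref{ConvergenceVariance}, \ref{ConvergenceBias} (supported by Proposition~\ref{BootstrappedInnovations} and Lemma~\ref{L:UniformConvergenceInNeighborhood}) and assembles them exactly as you outline.
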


This means that the bootstrapped regression operator has the same Gaussian limiting distribution in probability as the estimated regression operator. This convergence can also be characterized in terms of bounded Lipschitz continuous functionals $F$. Concerning the one-dimensional projections, we also obtain a uniform convergence result with P{\'o}lya's theorem. This is a generalization of the result given in \cite{franke_bootstrap_2002} for the autoregression bootstrap of a real-valued AR(1) process.

In contrast to the existing results for the wild and naive bootstrap in functional kernel regression (see \cite{ferraty_validity_2010} or in \cite{krebs2018doublefunctional}), the autoregression bootstrap only approximates the true distribution in probability and not $a.s.$ This is mainly because in our case the bootstrap regressors $X^*_t$ do not coincide with the original regressors $X_t$ and, thus, have different, more complex stochastic properties.

Finally, consider the scaling factor $(n\phi(h))^{1/2}$. Let $\hat{F}_{n,x}(h)=n^{-1} \sum_{i=1}^n \1{ d(x,X_{n,i})\le h}$ be the empirical version of $F_x(h)$. Then the above results remain valid if we replace $(n \phi(h))^{1/2}$ by $(n \hat{F}_{n,x}(h))^{1/2}$ and omit the factor $f_X(x)$ in the definition of the covariance operator $\cC_x$. Indeed, $F_x(h)/\phi(h) = f_X(x) + o(1)$ and if $\hat{F}_{n,x}(h)/F_x(h)\rightarrow 1$ as $n\rightarrow\infty$, the claim follows directly from Slutzky's theorem, see also \cite{ferraty_validity_2010} and \cite{laib2010nonparametric} for this extension.

\section{Technical results}\label{Section_Proofs}

In this section, we write $\cF^*_{t}$ for the $\sigma$-field generated by the random variables $X^*_0,\ldots,X^*_t$ for $t=0,\ldots,n$. So $\Ec{ Z | \cF^*_t} = \E{ Z | X_0,\ldots, X_n, X^*_0,\ldots,X^*_t }$ for a random variable $Z$.

Since the autoregression bootstrap is a residual-based bootstrapping procedure, we need to study the approximability of the innovations by the bootstrap innovations, where the latter are drawn from the centered sample residuals. This is done in the following in terms of the Mallows metric $\operatorname{d}_2$ which is discussed in detail in \cite{bickel1981}: let $F$ and $G$ be two distributions on $\cS$ and $p\ge 1$, then the Mallows distance between $F$ and $G$ is given by
\[
 \md{F}{G} \coloneqq \inf  \E{ \norm{U-V}^p }^{1/p},
\]
where the infimum is taken over all pairs of random variables $(U,V)$ with values in $\cS^2$ such that $\cL(U)=F$ and $\cL(V)=G$. \cite{bickel1981} prove that the infimum is attained. This fact ensures the existence of a coupled process $\{ \tilde{\epsilon}_t:t=1,\ldots,n+1 \}$ such that the residual $\epsilon_t$ and the bootstrap residual $\epsilon^*_t$ feature the relation
\begin{itemize}\setlength\itemsep{0em}
\item [(i)]	$\{ \tilde{\epsilon}_t:t=1,\ldots,n+1 \}$ are conditionally i.i.d.\ given the data $X_0,\ldots,X_{n+1}$; 
\item [(ii)] the components $\tilde{\epsilon}_t$ have a conditional distribution given the data $X_0,\ldots,X_{n+1}$ which coincides with the unconditional distribution of the $\epsilon_t$, i.e., $\cL^*(\tilde\epsilon_t) = \cL(\epsilon_t)$ for $t=1,\ldots,n+1$;
\item [(iii)] $\operatorname{d}_2(\epsilon_t,\epsilon^*_t) = \operatorname{d}_2(\tilde{\epsilon}_t,\epsilon^*_t) = \mathbb{E}^* [\| \tilde{\epsilon}_t - \epsilon^*_t \|^2 ]^{1/2} = o(1) $ for $t=1,\ldots,n+1$.
\end{itemize}
See also \cite{franke_bootstrap_2002} and the subsequent Theorem~\ref{ConsistencyFARBoot} for the existence of such a process $\tilde\epsilon$.

Define a process $\tilde{X}$ as follows: set $\tilde{X}_0$ such that $\cL^*(\tilde{X}_0) = \cL(X_0)$. Then use the residuals $\tilde{\epsilon}_t$ to make the recursive definition $\tilde{X}_{t} = \Psi(\tilde{X}_{t-1}) +  \tilde{\epsilon}_{t}$. Note that we are actually dealing with a triangular array of random variables $\{\tilde\epsilon_{n,1},\ldots,\tilde\epsilon_{n,n+1} \}$ and $\{ \tilde{X}_{n,1},\ldots,\tilde{X}_{n,n+1} \}$, $n\in\N$.

The proof of Theorem~\ref{BootstrapDistribution} decomposes in several lemmas, we consider the following quantities
\begin{align*}
	\hatPsi(x) - \Psi(x) &= \frac{ (n \phi(h) )^{-1} \sum_{t=1}^n \Delta_{h,t}(x) ( \Psi(X_t)-\Psi(x) ) }{ (n \phi(h) )^{-1}  \sum_{t=1}^n \Delta_{h,t}(x)  } +  \frac{ (n \phi(h) )^{-1} \sum_{t=1}^n \Delta_{h,t}(x) \epsilon_{t+1} }{ (n \phi(h) )^{-1}  \sum_{t=1}^n \Delta_{h,t}(x) } \nonumber \\
	&= \frac{ B_g(x) }{\hat{f}_h(x) } + \frac{ V_g(x) }{\hat{f}_h(x)}.\\
		&\nonumber \\
		\hatPsiB(x) - \hatPsib(x) &= \frac{ (n \phi(h) ) )^{-1} \sum_{t=1}^n \Delta^*_{h,t}(x) ( \hatPsib(X_t^*)-\hatPsib(x) ) }{ (n \phi(h) )^{-1}  \sum_{t=1}^n \Delta^*_{h,t}(x) }  +  \frac{ (n \phi(h) )^{-1} \sum_{t=1}^n \Delta^*_{h,t}(x) \epsilon_{t+1}^* }{ (n \phi(h) )^{-1}  \sum_{t=1}^n \Delta^*_{h,t}(x)  } \nonumber \\
	&= \frac{ B_g^*(x) }{\hat{f}_h^*(x) } + \frac{ V_g^*(x) }{\hat{f}_h^*(x)}	.	
\end{align*}
The approximating property of the $\epsilon^*_t$ is as follows.

\begin{theorem}[Approximation of the innovations in the FAR(1)-model]\label{ConsistencyFARBoot}
Assume that $\E{\norm{\epsilon_t}^p}<\infty$, for some $1\le p <\infty$. Then the bootstrapped innovations satisfy $
		\md{ \epsilon_t}{\epsilon^*_t} = \md{ \tilde{\epsilon}_t }{\epsilon^*_t} = \Ec{ \norm{\tilde{\epsilon}_t - \epsilon^*_t}^p }^{1/p} = \op(1)$ as $n\rightarrow \infty$. 
\end{theorem}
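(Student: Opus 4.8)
The plan is to recognize $\md{\epsilon_t}{\epsilon^*_t}$ as the Mallows distance $\md{F}{\hat{F}_n}$ between the unconditional law $F=\cL(\epsilon_t)$ of a true innovation and the conditional law $\hat{F}_n=\cL^*(\epsilon^*_t)$, which by the resampling scheme of the algorithm is precisely the empirical distribution of the centered estimated residuals $\{\epsilon'_{n,j}:j\in\cI_n\}$ (mass $|\cI_n|^{-1}$ on each atom, and independent of $t$). Because the infimum in the definition of $\operatorname{d}_p$ is attained \cite{bickel1981} and $\tilde\epsilon_t$ is constructed to have conditional law $F$, the two asserted equalities $\md{\epsilon_t}{\epsilon^*_t}=\md{\tilde\epsilon_t}{\epsilon^*_t}=\Ec{\|\tilde\epsilon_t-\epsilon^*_t\|^p}^{1/p}$ are immediate once we show $\md{F}{\hat{F}_n}=\op(1)$, which is the real content. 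Since $\operatorname{d}_p$ is a genuine metric on the Borel laws on $\cS$ with finite $p$-th moment, I would split it along the empirical distribution $\tilde{F}_n$ of the \emph{true} innovations $\{\epsilon_j:j\in\cI_n\}$, so that $\md{F}{\hat{F}_n}\le\md{F}{\tilde{F}_n}+\md{\tilde{F}_n}{\hat{F}_n}$, and bound the two pieces separately.

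For the second piece I would use the coupling of $\tilde{F}_n$ and $\hat{F}_n$ that matches $\epsilon_j$ with $\epsilon'_{n,j}$ for every $j\in\cI_n$, each matched pair carrying mass $|\cI_n|^{-1}$; since this is a legitimate coupling,
\[
\md{\tilde{F}_n}{\hat{F}_n}^p\le|\cI_n|^{-1}\sum_{j\in\cI_n}\|\epsilon_j-\epsilon'_{n,j}\|^p=|\cI_n|^{-1}\sum_{j\in\cI_n}\| \Psi(X_{j-1})-\hatPsib(X_{j-1})-\bar\epsilon \|^p .
\]
For $j\in\cI_n$ we have $X_{j-1}\in\cU(r_n)$, so each summand is at most $(\sup_{y\in\cU(r_n)}\|\Psi(y)-\hatPsib(y)\|+\|\bar\epsilon\|)^p$, a bound independent of $j$. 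The supremum is $\op(1)$ by \eqref{C:UniformConv1}. For $\bar\epsilon$, the centering identity $\sum_{t\in\cI_n}\epsilon'_{n,t}=0$ gives $\bar\epsilon=|\cI_n|^{-1}\sum_{t\in\cI_n}\epsilon_t+|\cI_n|^{-1}\sum_{t\in\cI_n}(\Psi(X_{t-1})-\hatPsib(X_{t-1}))$, whose second term is again $\op(1)$ by \eqref{C:UniformConv1} and whose first term is $\op(1)$ by the strong law of large numbers for i.i.d.\ mean-zero $\cH$-valued variables (the event $\{t\in\cI_n\}$ is $\cF_{t-1}$-measurable, hence independent of $\epsilon_t$, and $|\cI_n|/n\to1$ a.s.\ since $\cS\subseteq\bigcup_{r}\cU(r)$, $r_n\to\infty$, and the process is stationary ergodic). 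Hence $\md{\tilde{F}_n}{\hat{F}_n}=\op(1)$.

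For the first piece $\md{F}{\tilde{F}_n}$ I would appeal to the Hilbert-space version of the Bickel--Freedman theorem: for i.i.d.\ $\cH$-valued $\epsilon_j$ with $\E{\|\epsilon_j\|^p}<\infty$, the empirical measure converges to $F$ in $\operatorname{d}_p$ almost surely, equivalently $\tilde{F}_n\Rightarrow F$ together with $\int_\cH\|u\|^p\,\tilde{F}_n(\intd{u})\to\E{\|\epsilon_j\|^p}$. The only twist is that the index set $\cI_n$ is random and data-dependent; but membership $\{j\in\cI_n\}$ is determined by $X_{j-1}$ alone — hence by $\epsilon_{j-1},\epsilon_{j-2},\dots$ and not by $\epsilon_j$ — and $|\cI_n|=n(1+o(1))$ a.s., so the averages restricted to $\cI_n$ share the almost sure limits of the full-sample averages, giving $\md{F}{\tilde{F}_n}=\oas(1)$. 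Combining the two bounds yields $\md{\epsilon_t}{\epsilon^*_t}=\op(1)$.

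The step I expect to be the main obstacle is precisely the careful treatment of the random index set $\cI_n$: one has to verify that discarding the asymptotically negligible indices $\{t\le n:X_{t-1}\notin\cU(r_n)\}$ perturbs neither the law-of-large-numbers limit of the true innovations nor the convergence of their empirical $p$-th moments, and that $|\cI_n|$ grows fast enough that dividing by it is harmless — this is where the construction of the sequence $(r_n)$ and the uniform consistency \eqref{C:UniformConv1} of the oversmoothed estimate $\hatPsib$ enter. Everything else — the triangle inequality for $\operatorname{d}_p$, the one-line coupling estimate, and the Bickel--Freedman limit theorem — is routine.
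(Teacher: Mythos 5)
Your proof is correct and follows the same high-level blueprint as the paper's --- triangle inequality for $\operatorname{d}_p$ through an intermediate empirical measure, Bickel--Freedman for the i.i.d.\ innovations, and a coupling argument for the remaining piece --- but the two proofs place the intermediate distribution at a different spot, and the work gets distributed differently. The paper takes the intermediate measure to be the \emph{full-sample} empirical distribution $F_n=n^{-1}\sum_{t=1}^n\delta_{\epsilon_t}$, invokes Bickel--Freedman in the textbook i.i.d.\ setting to get $\md{F}{F_n}\to0$ a.s., and then constructs an explicit Bernoulli-randomized coupling $(U,V)$ (with $J\sim\operatorname{unif}(\cI_n)$, $J'\sim\operatorname{unif}(\{1,\dots,n\}\setminus\cI_n)$, $W\sim\operatorname{Ber}((n-|\cI_n|)/n)$) to handle the fact that $F_n$ and $\hat{F}_n$ put mass on mismatched index sets; the contributions from $W=1$ are controlled by $\E[n^{-1}\sum_{t\notin\cI_n}\|\epsilon_t\|^p]=\E\|\epsilon_t\|^p\,\p(t\notin\cI_n)\to 0$, exploiting the $\cF_{t-1}$-measurability of $\{t\in\cI_n\}$. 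You instead take the intermediate measure to be the $\cI_n$-restricted empirical distribution $\tilde{F}_n$, which makes the coupling to $\hat{F}_n$ the trivial index-matching $\epsilon_j\leftrightarrow\epsilon'_{n,j}$, so that $\md{\tilde{F}_n}{\hat{F}_n}^p\le\sup_{y\in\cU(r_n)}\|\hatPsib(y)-\Psi(y)\|^p+\Op(\|\bar\epsilon\|^p)=\op(1)$ in one line; the price is that the Bickel--Freedman step now has to be carried out for the empirical measure over the \emph{random, data-dependent} subset $\cI_n$, which you correctly flag as the delicate point and handle via the $\cF_{t-1}$-measurability of $\{t\in\cI_n\}$, $|\cI_n|/n\to1$, and a law-of-large-numbers argument. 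The trade-off is essentially a matter of taste: the paper's Bernoulli coupling keeps the LLN step in its cleanest i.i.d.\ form at the cost of a more elaborate coupling, while your version keeps the coupling trivial at the cost of having to argue uniform integrability and negligibility of the discarded indices inside the LLN step (which, if written out fully, would reproduce an estimate very close to the paper's). Both are complete, correct proofs.
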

\begin{proof}
First note that by \cite{bickel1981} it is true that $\md{F_{\epsilon} }{F_n } \rightarrow 0$ $a.s.$, where $F_n$ is the empirical distribution of the error terms $\{\epsilon_1,\ldots,\epsilon_t\}$. Thus, it suffices to consider $\md{\hat{F}_n} {F_n}$, where $\hat{F}_n$ is the distribution of the $\epsilon'_t$, $t\in\cI_n$. Therefore, consider the independent random variables
\[
			J\sim \operatorname{unif}(\cI_n),\quad J' \sim \operatorname{unif}(\{1,\ldots,n\}\setminus \cI_n ), \quad W \sim \operatorname{Ber}( (n-|\cI_n|) n^{-1} ).
\]
Set
\[
 U \coloneqq \epsilon'_J, \quad V \coloneqq \epsilon_J \delta(W=0) + \epsilon_{J'} \delta(W=1).
\]
Clearly, $\p^*( U = \epsilon'_t ) = |\cI_n|^{-1}$ for $t\in\cI_n$. Moreover,
\[
			\p^*( V = \epsilon_t ) = \p^*(W=0) \p^*( V = \epsilon_t | W=0 ) + \p^*(W=1) \p^*( V = \epsilon_t | W=1 ) = n^{-1}
\]
if $t\in \cI_n$ and if $t\notin\cI_n$. So $U$ (resp. $V$) has distribution $\hat{F}_n$ (resp. $F_n$). It remains to compute their Mallows distance.
\begin{align}
		\Ec{ \norm{ U - V }^p  } &\le \Ec{ \norm{ U - V }^p  \1{W=0} } + 2^p \Ec{ \|U\|^p + \|V\|^p  \1{W=1} } \nonumber\\
		&= \frac{|\cI_n|}{n} \Ec{ \| \epsilon'_J-\epsilon_J \|^p  } + 2^p \frac{n-|\cI_n|}{n}  \Ec{ \|\epsilon'_J \|^p + \| \epsilon_{J'} \|^p   } \nonumber\\
		\begin{split}\label{EqConsistencyFARBootEq1}
		&\le n^{-1} \sum_{t\in\cI_n} \| \epsilon'_t - \epsilon_t \|^p + 4^p \frac{n-|\cI_n|}{n} \Big\{ |\cI_n|^{-1} \sum_{t\in\cI_n} \|\epsilon'_t-\epsilon_t\|^p \\
		&\qquad\qquad\qquad\qquad + |\cI_n|^{-1} \sum_{t\in\cI_n} \|\epsilon_t\|^p + |n-\cI_n|^{-1} \sum_{t\notin\cI_n} \|\epsilon_t\|^p  \Big\} .
		\end{split}
\end{align}
Consider the first and the second sum in \eqref{EqConsistencyFARBootEq1}
\begin{align}
		 |\cI_n|^{-1}  \sum_{t\in\cI_n} \norm{\epsilon'_t - \epsilon_t }^p &\le 2^p\Big( |\cI_n|^{-1}  \sum_{t\in\cI_n} \norm{\hat\epsilon_t - \epsilon_t }^p + \| \bar\epsilon \|^p	\Big). \label{EqConsistencyFARBootEq2}
\end{align}
It follows by the strong law of large numbers for Hilbert space valued sequences (cf.\ \cite{bosq_linear_2000}) that the last term in \eqref{EqConsistencyFARBootEq2} converges to zero $a.s.$ Moreover,
\begin{align*}
		  |\cI_n|^{-1}  \sum_{t\in\cI_n} \norm{\hat\epsilon_t - \epsilon_t }^p &=   |\cI_n|^{-1} \sum_{t\in\cI_n} \| \hatPsib(X_{t-1})-\Psi(X_{t-1}) \|^p  \le \sup_{x\in \cU(r_n) } \|\hatPsib(x) - \Psi(x) \|^p.
		\end{align*}
This term converges to zero in probability by Corollary~\ref{C:UnifConsistencyHatPsi}. It remains to consider the second and the third sum in the curly parentheses in \eqref{EqConsistencyFARBootEq1}. Clearly, $(n-|\cI_n|) n^{-1} = \op(1)$. We have for the second sum
\begin{align*}
			|\cI_n|^{-1} \sum_{t\in\cI_n} \|\epsilon_t\|^p  &\rightarrow \E{\|\epsilon_t\|^p} \; \text{ in probability}.
\end{align*}
So, the second sum is $\op(1)$. And, if we multiply the third sum in the curly bracket by the factor $(n-|\cI_n|) n^{-1}$, we find
\begin{align*}
\mathbb{E}\Big[ n^{-1} \sum_{t\notin\cI_n} \|\epsilon_t\|^p \Big] &=  \E{ \|\epsilon_t \|^p } \p(t\notin \cI_n) \rightarrow 0.
\end{align*}
 Hence, $\Ec{ \norm{ U - V }^p  } = \op(1)$ and $\operatorname{d}_p(\epsilon_t,\epsilon^*_t) = \operatorname{d}_p(\tilde{\epsilon}_t,\epsilon^*_t) = \op(1)$.
\end{proof}

It follow several lemmas which generalize the results of \cite{franke_bootstrap_2002} to the functional case. 

\begin{lemma}\label{BootstrapCondProb}\
\begin{itemize}\setlength\itemsep{0em}
\item [(i)]	$\max_{1\le t \le n} \p^*( X^*_t \notin  \cU(r_n)  ) = \Op( r_n^{-1}) $ and $\max_{1\le t \le n} \p^*( \hatPsib(X^*_{t-1}) \notin  \cU(r_n)  )= \Op( r_n^{-1}) $.
\item [(ii)] $n^{-1} \sum_{t=1}^n \1{ X^*_t \notin  \cU(r_n)  } = \Op( r_n^{-1})$, $n^{-1} \sum_{t=1}^n \mathds{1}\{ \hat\Psi_{n,b}(X^*_{t-1}) \notin  \cU(r_n)  \} = \Op( r_n^{-1})$  \\
and $n^{-1} \sum_{t=1}^n \1{ \Psi(X^*_{t-1}) \notin  \cU(r_n)  } = \Op( r_n^{-1})$.
\end{itemize}
Moreover, $\1{  X^*_t \notin  \cU(r_n)  , \norm{X^*_t - x}\le h}	\le  \1{r_n < c }$ for some nonrandom $c\in\R_+$ uniformly in $t$ and $n$.
\end{lemma}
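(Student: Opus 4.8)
\emph{Sketch of the argument.} Membership in $\cU(r_n)$ is governed by the norm $\|\cdot\|_{1,C^0(\cD)}=\|\cdot\|_\infty+\tiom_{\,\cdot\,}$, while the recursion $X^*_t=\hatPsib(X^*_{t-1})+\epsilon^*_t$ contracts only in the Hilbert norm $\|\cdot\|$ of $\cH$ (via $L_\Psi<1$); the plan is to treat the two seminorm pieces of $\|\cdot\|_{1,C^0(\cD)}$ separately and to reduce the whole lemma to a bound $\Ec{\|X^*_t\|}=\Op(1)$ that is \emph{uniform in $t$}. The Lipschitz seminorm is in fact deterministically bounded: setting $C':=\sup_{y\in\cS}\tiom_{\Psi(y)}+C$, finite by \ref{C:Operator}~\ref{C:Operator1} and \ref{C:Process}~\ref{C:Process2}, we have $\tiom_{X_t}\le\tiom_{\Psi(X_{t-1})}+\tiom_{\epsilon_t}\le C'$ $a.s.$ for all $t$, and since $\hatPsib(z)$ is in every case a convex combination of $X_1,\dots,X_{n+1}$ (a $K$-weighted average of $X_2,\dots,X_{n+1}$ on $\cS\cap\cU(r_n)$, the sample mean $\overline{X}_n$ otherwise), $\tiom_{\hatPsib(z)}\le C'$ for all $z\in\cS$; hence $\tiom_{\epsilon'_j}\le 4C'$ for $j\in\cI_n$, $\tiom_{\epsilon^*_t}\le 4C'$ and $\tiom_{X^*_t}\le 5C'$ for every $t$. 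I would also use the Sobolev-type embedding $\|f\|_\infty\le C_S(\|f\|+\tiom_f)$ for Lipschitz $f$ on $\cD$, with $C_S=C_S(\cD,\nu)$ (a consequence of the convexity of $\cD$ and the strict positivity of the density of $\nu$, implicit already in the covering estimate of Section~\ref{Section_NotationsDefinitions}), so that $\|Y\|_{1,C^0(\cD)}\le C_S\|Y\|+(C_S+1)\tiom_Y$; it therefore suffices to bound $\max_t\Ec{\|Y_t\|}$ for $Y_t\in\{X^*_t,\ \epsilon^*_t,\ \hatPsib(X^*_{t-1}),\ \Psi(X^*_{t-1})\}$.

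For $Y_t=X^*_t$ the contraction $L_\Psi<1$ does the work. Put $D_n:=\sup_{z\in\cS\cap\cU(r_n)}\|\hatPsib(z)-\Psi(z)\|=\op(1)$ by \eqref{C:UniformConv1} (cf.\ Corollary~\ref{C:UnifConsistencyHatPsi}) and $\bar{M}_n:=n^{-1}\sum_k\|X_k\|=\Op(1)$ (ergodic theorem, $\E{\|X_0\|}<\infty$ by $L_\Psi<1$ and \ref{C:Process}). On $\cS\cap\cU(r_n)$, $\|\hatPsib(z)\|\le\|\Psi(0)\|+L_\Psi\|z\|+D_n$; off $\cU(r_n)$, $\|\hatPsib(z)\|=\bar{M}_n$; so with $K_n:=(\|\Psi(0)\|+D_n)\vee\bar{M}_n=\Op(1)$ we get $\|\hatPsib(z)\|\le L_\Psi\|z\|+K_n$ for all $z\in\cS$. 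Applying this to the recursion and taking $\Ec{\,\cdot\,}$ gives $\Ec{\|X^*_t\|}\le L_\Psi\Ec{\|X^*_{t-1}\|}+K_n+E_n$ with $E_n:=\Ec{\|\epsilon^*_t\|}=|\cI_n|^{-1}\sum_{j\in\cI_n}\|\epsilon'_j\|$, independent of $t$. Since $\hat\epsilon_j=\epsilon_j+(\Psi(X_{j-1})-\hatPsib(X_{j-1}))$ for $j\in\cI_n$, the last term of norm $\le D_n$, and $\epsilon'_j=\hat\epsilon_j-\bar\epsilon$, one bounds $E_n\le 2(n/|\cI_n|)\,n^{-1}\sum_{j=1}^n\|\epsilon_j\|+2D_n$; here $n^{-1}\sum_j\|\epsilon_j\|\to\E{\|\epsilon_1\|}<\infty$, and $n^{-1}\sum_j\1{X_{j-1}\notin\cU(r_n)}\le r_n^{-1}n^{-1}\sum_j\|X_{j-1}\|_{1,C^0(\cD)}=\op(1)$ forces $|\cI_n|/n\to1$, so $E_n=\Op(1)$. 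Iterating and using $\Ec{\|X^*_0\|}=\Op(1)$ (guaranteed by the standing assumption on $X^*_0$) yields $\max_{0\le t\le n}\Ec{\|X^*_t\|}\le\Ec{\|X^*_0\|}+(K_n+E_n)/(1-L_\Psi)=\Op(1)$. The other three families reduce to this one: $\Ec{\|\epsilon^*_t\|}=E_n$; $\hatPsib(X^*_{t-1})=X^*_t-\epsilon^*_t$; and $\|\Psi(X^*_{t-1})\|\le\|\Psi(0)\|+L_\Psi\|X^*_{t-1}\|$. In each case Step~1 then gives $\max_t\Ec{\|Y_t\|_{1,C^0(\cD)}}=\Op(1)$.

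The conclusions follow by Markov's inequality. For (i), $\p^*(Y_t\notin\cU(r_n))=\p^*(\|Y_t\|_{1,C^0(\cD)}>r_n)\le r_n^{-1}\Ec{\|Y_t\|_{1,C^0(\cD)}}$, and the maximum over $1\le t\le n$ is $\Op(r_n^{-1})$. For (ii), $n^{-1}\sum_t\1{Y_t\notin\cU(r_n)}\le r_n^{-1}n^{-1}\sum_t\|Y_t\|_{1,C^0(\cD)}$, and since $\Ec{n^{-1}\sum_t\|Y_t\|_{1,C^0(\cD)}}\le\max_t\Ec{\|Y_t\|_{1,C^0(\cD)}}=\Op(1)$, the nonnegative variable $n^{-1}\sum_t\|Y_t\|_{1,C^0(\cD)}$ is itself $\Op(1)$ (conditional Markov again), so the sum is $\Op(r_n^{-1})$; this covers $Y_t\in\{X^*_t,\ \hatPsib(X^*_{t-1}),\ \Psi(X^*_{t-1})\}$. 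Finally, if $\|X^*_t-x\|\le h$ then $\|X^*_t\|\le\|x\|+\sup_n h<\infty$, hence $\|X^*_t\|_{1,C^0(\cD)}\le C_S(\|x\|+\sup_n h)+5(C_S+1)C'=:c$ is bounded by a nonrandom constant, so $X^*_t\notin\cU(r_n)$ together with $\|X^*_t-x\|\le h$ forces $r_n<c$, uniformly in $t$ and $n$.

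The main obstacle is the second paragraph. The bootstrap operator $\hatPsib$ is neither equal to $\Psi$ nor a $\|\cdot\|_{1,C^0(\cD)}$-contraction: it is only $\op(1)$-close to $\Psi$ on $\cU(r_n)$ and degenerates to the constant $\overline{X}_n$ outside $\cU(r_n)$, so no $C^0$-bound can be propagated directly through the recursion. Carrying the recursion in $\cH$ is what makes $L_\Psi<1$ usable — and simultaneously harmless off $\cU(r_n)$, since $\|\overline{X}_n\|=\Op(1)$ — the bootstrap innovations are controlled through $\epsilon'_j\approx\epsilon_j$ and the ergodic theorem, and one returns to the norm $\|\cdot\|_{1,C^0(\cD)}$ defining $\cU(r_n)$ through the Sobolev embedding together with the $a.s.$ uniform boundedness of the Lipschitz seminorms along both the original and the bootstrap process. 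It is the uniformity in $t$ of the bound $\Ec{\|X^*_t\|}=\Op(1)$ that delivers both the ``$\max_t$'' statement in (i) and the ``$n^{-1}\sum_t$'' statement in (ii).
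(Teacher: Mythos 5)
Your proof is correct and follows essentially the same route as the paper's: a deterministic uniform bound on the Lipschitz seminorms $\tiom_{X^*_t}$ obtained from the sublinearity of $\tiom$ and the fact that $\hatPsib$ always outputs a convex combination of the observed curves (or $\overline{X}_n$); a contraction iteration for $\Ec{\norm{X^*_t}}$ in the Hilbert norm driven by $L_\Psi<1$ together with $\sup_{z\notin\cU(r_n)}\norm{\hatPsib(z)}=\norm{\overline{X}_n}=\Op(1)$ and Corollary~\ref{C:UnifConsistencyHatPsi}; and Markov's inequality to conclude (i), (ii), and the final amendment. The only cosmetic deviation is the $L^\infty$ step: you invoke an additive embedding $\norm{f}_\infty\le C_S(\norm{f}+\tiom_f)$ (which indeed holds, e.g.\ by averaging $f(u)\le f(v)+\tiom_f\,\mathrm{diam}(\cD)$ over $v\in\cD$ and applying Cauchy--Schwarz, or by AM--GM applied to Lemma~\ref{BoundInftyNorm}), whereas the paper uses the multiplicative interpolation of Lemma~\ref{BoundInftyNorm} and then H\"older's inequality to separate factors inside the Ces\`aro sum in part (ii); both routes deliver the same bounds.
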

\begin{proof}
In the first part, we prove that both $\max_{1\le t \le n} \Ec{ \norm{X^*_t} } = \Op(1)$ and $n^{-1} \sum_{t=1}^n \| X^*_t\| = \Op(1)$. Consider
\begin{align*}
		\norm{ X^*_{t} } &\le \sup_{x\notin  \cU(r_n) } \norm{\hatPsib(x)} \1{X^*_{t-1} \notin  \cU(r_n) } + \sup_{x \in  \cU(r_n) } \norm{ \hatPsib(x)-\Psi(x) } \1{X^*_{t-1} \in  \cU(r_n) } \\
		&\quad + \norm{ \Psi(X^*_{t-1})} \1{X^*_{t-1} \in  \cU(r_n) } + \norm{\epsilon^*_{t}}  .
\end{align*}
Hence, the conditional expectation of $\norm{ X^*_{t} }$ is at most
\begin{align*}
		\Ec{\norm{ X^*_{t} } } &\le \sup_{x\notin \cU(r_n) }\norm{\hatPsib(x)} + \sup_{x \in  \cU(r_n) } \norm{ \hatPsib(x)-\Psi(x) } + L_{\Psi} \Ec{ \norm{X^*_{t-1}} } + \norm{\Psi(0)} + \Ec{\norm{\epsilon^*_{t}} },
		\end{align*}
where by assumption the number $L_{\Psi}<1$. Set 
\[
D\coloneqq \sup_{x\notin \cU(r_n) }\norm{\hatPsib(x)}+ \sup_{x \in  \cU(r_n) } \norm{ \hatPsib(x)-\Psi(x) }+\norm{\Psi(0)} + \Ec{\norm{\epsilon^*_{t}} }.\]
Then $D$ is independent of $t$ and bounded in probability by Theorem~\ref{ConsistencyFARBoot} and Corollary~\ref{C:UnifConsistencyHatPsi}. If we iterate this inequality another $(t-1)$ times, we obtain
	\[ 
	\Ec{\norm{ X^*_{t} } } \le L_{\Psi}^{t}\, \Ec{ \norm{X^*_0} } + \sum_{k=0}^{t-1} L_{\Psi}^k D \le  \Ec{ \norm{X^*_0}} + D/(1-L_{\Psi}) . \]
In particular, $\max_{0\le t \le n} \Ec{\norm{ X^*_{t} } } \le \Ec{ \norm{X^*_0}} + D/(1-L_{\Psi})$ in probability and uniformly in $n\in\N$ because $\sup_{n\in\N} \Ec{ \|X_0^*\| }<\infty$. Moreover, we see that
\begin{align*}
			\| X^*_t\| &\le \|\epsilon^*_t\| + L_\Psi  \| X^*_{t-1}\| + D',
\end{align*}
where $D' = \sup_{x\notin \cU(r_n) } \| \hatPsib(x) \| +  \sup_{x \in  \cU(r_n) } \| \hatPsib(x)-\Psi(x) \| + \| \Psi(0) \|$.
Iterating this inequality and summing up, we obtain
\begin{align}\label{BootstrapCondProbE1}
		n^{-1} \sum_{t=1}^n \| X^*_t\| \le (1-L_\Psi)^{-1} \Big(n^{-1} \sum_{t=1}^n \| \epsilon^*_t\| + D' + \| X^*_0\| \Big).
\end{align}
Now, we use
\begin{align}\label{BootstrapCondProbE2}
		n^{-1} \sum_{t=1}^n \|\epsilon^*_t\| \le \sum_{j\in\cI_n} (n^{-1}\sum_{t=1}^n \delta_{t,j}) (\|\epsilon_j\|+\|\hat\Psi_{n,b}(X_{j-1})-\Psi(X_{j-1})\| ) + \| \bar{\hat{\epsilon}}\|,
\end{align}
where
\[
			 \sum_{j\in\cI_n} (n^{-1}\sum_{t=1}^n \delta_{t,j}) \|\hat\Psi_{n,b}(X_{j-1})-\Psi(X_{j-1})\| \le L_\Psi \sum_{j\in\cI_n} (n^{-1}\sum_{t=1}^n \delta_{t,j}) \|X_{j-1}\| + \Op(1).
\]
Clearly, $n^{-1} \sum_{j\in\cI_n} (\sum_{t=1}^n \delta_{t,j} +1 ) ( \norm{X_j} + \norm{\epsilon_j} )$ is bounded in probability. Thus, \eqref{BootstrapCondProbE2} is $\Op(1)$ and so is \eqref{BootstrapCondProbE1}. This finishes the first part.

We come to the second part of the proof. Consider $\p^*( X^*_t\notin  \cU(r_n)  ) \le  r_n^{-1} \Ec{ \norm{X^*_t}_\infty + \tiom_{X^*_t}  }$. We show that this last conditional expectation is bounded in probability. Note that $\norm{X^*_t}_\infty  \le C \tiom_{X^*_t}^{d/(2+d)} \norm{X^*_t}^{2/(2+d)} $ by Lemma~\ref{BoundInftyNorm} for a constant $C$ which does not depend on the functions $X^*_t$. Consequently,
\begin{align*}
			\Ec{\norm{X^*_t}_\infty } \le C \Ec{ \tiom_{X^*_t} }^{d/(2+d)} \Ec{ \norm{X^*_t} }^{2/(2+d)} .
\end{align*}
So, it remains to consider $ \tiom_{X^*_t}$. For that reason, we use the sublinearity of the map $x\mapsto \tiom_x$ and the definition of the estimator $\hatPsib$. As $\tiom_{\epsilon_t}$ and $\tiom_{\Psi(X_t)}$ are essentially bounded, there is a constant $c^*\in\R_+$ such that for all $t$ and $n$
\begin{align}
			\tiom_{X^*_t} &\le  \tiom_{\epsilon^*_t} + \tiom_{\hat\Psi_{n,b}(X^*_{t-1}) } \le  \tiom_{\epsilon^*_t} + \sum_{j=1}^n \frac{\Delta_{b,j}( X^*_{t-1} ) }{ \sum_{j=1}^n \Delta_{b,j}( X^*_{t-1} )  } \tiom_{X_{j+1}} \nonumber \\
			&\le \sum_{j=1}^n  \delta_{t,j} \left( \tiom_{X_j } + \tiom_{\epsilon_j} \right) + n^{-1}\sum_{j=1}^n \tiom_{X_j} + 2 \sup_{x\in\cS} \sum_{j=1}^n \frac{\Delta_{b,j}( x) }{ \sum_{j=1}^n \Delta_{b,j}( x )  } \tiom_{X_{j+1}} \le c^*. \label{Eq:BootstrapCondProb2}
\end{align}
In particular,	$\Ec{ \tiom_{X^*_t} } \le c^*$ (uniformly in $t$ and $n$). This proves the first statement in (i) which is $\max_{1\le t\le n} \p^*( X^*_t\notin  \cU(r_n)  ) = \Op( r_n^{-1} )$. The second statement in (i) follows similarly. This completes (i). 

We come to (ii). Using Lemma~\ref{BoundInftyNorm}, $\1{a>b}\le b^{-1} a$ if $a,b>0$ and H{\"o}lder's inequality, we obtain
\begin{align*}
			n^{-1} \sum_{t=1}^n \1{ X^*_t \notin  \cU(r_n)  } = \cO\bigg\{ r_n^{-1} \Big( \Big(n^{-1} \sum_{t=1}^n \tiom_{X^*_t} \Big)^{d/(d+2)} (n^{-1}  \sum_{t=1}^n \norm{X^*_t} )^{2/(d+2)} + n^{-1} \sum_{t=1}^n \tiom_{X^*_t} \Big) \bigg\}.
\end{align*}
Note that $n^{-1} \sum_{t=1}^n \norm{X^*_t} $ is $\Op(1)$ by the above \eqref{BootstrapCondProbE1} et seq. Moreover, from \eqref{Eq:BootstrapCondProb2}, $\tiom_{X^*_t} \le c^*$. This shows that $n^{-1} \sum_{t=1}^n \1{ X^*_t \notin  \cU(r_n)  } = \Op( r_n^{-1})$. The second statement in (ii) follows similarly as the first one. For the last statement in (ii), we use additionally the smoothness assumption on the regression operator, i.e., $\tiom_{\Psi(x)} = \cO( 1)$.

We need again the inclusion 
$
					\{ X^*_t \notin  \cU(r_n)   \} \subseteq \{ \norm{X^*_t}_\infty > r_n/2 \quad\text{or} \quad \tiom_{X^*_t} > r_n /2\}
$ for the amendment of the lemma. If we use additionally the requirement that $\norm{X^*_t - x}\le h$, we obtain from Lemma~\ref{BoundInftyNorm}
\[
		\norm{ X^*_t}_\infty \le C \norm{ X^*_t }^{2/(2+d)} \tiom_{X^*_t}^{d/(2+d)} \le C (\norm{x}+h)^{d/(2+d)} \tiom_{X^*_t}^{d/(2+d)} = \cO(1).
\]
Hence, since $r_n \rightarrow\infty$, there is a constant $c\in\R_+$ such that $\{ X^*_t \notin  \cU(r_n) , \norm{X^*_t - x}\le h  \} \subseteq \{ r_n < c \}$.
\end{proof}

\begin{lemma}\label{BootstrapMean}
There are random variables $S_{n,1} = \op(1)$ and $S_{n,2} =\Op(1)$ ($n\rightarrow\infty$) such that
\begin{align*}
		\mathbb{E}^*[ \| X^*_t - \tilde{X}_t \| ] = S_{n,1} + L_{\Psi}^{t-1 } S_{n,2}. \quad \text{In particular, } \mathbb{E}^*\Big[ n^{-1} \sum_{t=1}^n \| X^*_t - \tilde{X}_t \| \Big] = \op(1).
		\end{align*}
\end{lemma}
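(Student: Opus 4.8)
The plan is to compare the two recursions $X^*_t = \hatPsib(X^*_{t-1}) + \epsilon^*_t$ and $\tilde X_t = \Psi(\tilde X_{t-1}) + \tilde\epsilon_t$ term by term, tracking the discrepancy $\|X^*_t - \tilde X_t\|$. Writing
\[
	X^*_t - \tilde X_t = \big(\hatPsib(X^*_{t-1}) - \Psi(\tilde X_{t-1})\big) + (\epsilon^*_t - \tilde\epsilon_t),
\]
I would split the operator difference via the intermediate term $\Psi(X^*_{t-1})$, namely $\hatPsib(X^*_{t-1}) - \Psi(X^*_{t-1})$ plus $\Psi(X^*_{t-1}) - \Psi(\tilde X_{t-1})$. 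The second piece is controlled by the Lipschitz property \ref{C:Operator}~\ref{C:Operator1}: $\|\Psi(X^*_{t-1}) - \Psi(\tilde X_{t-1})\| \le L_\Psi \|X^*_{t-1} - \tilde X_{t-1}\|$ with $L_\Psi < 1$, which is exactly what generates the geometric factor $L_\Psi^{t-1}$. The first piece, $\hatPsib(X^*_{t-1}) - \Psi(X^*_{t-1})$, is handled by splitting according to whether $X^*_{t-1} \in \cU(r_n)$ or not: on $\cU(r_n)$ it is bounded by $\sup_{x\in\cU(r_n)}\|\hatPsib(x) - \Psi(x)\|$, which is $\op(1)$ by \eqref{C:UniformConv1} (equivalently Corollary~\ref{C:UnifConsistencyHatPsi}); off $\cU(r_n)$ one uses that $\hatPsib$ is there bounded (it equals $\overline X_n$ or is otherwise controlled as in the proof of Lemma~\ref{BootstrapCondProb}) and that $\Ec{\|X^*_{t-1}\|}$ and $\Ec{\tiom_{X^*_{t-1}}}$ are $\Op(1)$ uniformly in $t$, so by Lemma~\ref{BootstrapCondProb}(i) the event $\{X^*_{t-1}\notin\cU(r_n)\}$ has conditional probability $\Op(r_n^{-1}) = \op(1)$, making its contribution to the expectation $\op(1)$.

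Taking $\Ec{\cdot}$ throughout and using that $\tilde X_{t-1}$ is $\cF^*$-measurable only in a limited sense — more carefully, conditioning and using the tower property over $\cF^*_{t-1}$ — I obtain a recursive inequality of the form
\[
	\Ec{\|X^*_t - \tilde X_t\|} \le L_\Psi \, \Ec{\|X^*_{t-1} - \tilde X_{t-1}\|} + s_n,
\]
where $s_n := \Ec{\|\epsilon^*_t - \tilde\epsilon_t\|} + (\text{the }\hatPsib - \Psi\text{ terms above})$ is $\op(1)$, uniformly in $t$, by Theorem~\ref{ConsistencyFARBoot} (which gives $\md{\tilde\epsilon_t}{\epsilon^*_t} = \op(1)$, hence by Jensen also the first-moment version) together with the two estimates just described. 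Iterating this scalar recursion from $t$ down to $0$ gives
\[
	\Ec{\|X^*_t - \tilde X_t\|} \le \frac{s_n}{1 - L_\Psi} + L_\Psi^{t}\,\Ec{\|X^*_0 - \tilde X_0\|},
\]
and since $\Ec{\|X^*_0 - \tilde X_0\|} \le \Ec{\|X^*_0\|} + \Ec{\|\tilde X_0\|}$ is $\Op(1)$ by the standing assumption on $X^*_0$ and the construction of $\tilde X_0$ (whose conditional law matches that of $X_0$), this is the claimed form with $S_{n,1} = s_n/(1-L_\Psi) = \op(1)$ and $S_{n,2} = L_\Psi^{-1}\Ec{\|X^*_0 - \tilde X_0\|}\cdot\mathbf{1} = \Op(1)$ (absorbing constants). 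Averaging over $t = 1,\dots,n$ and noting $n^{-1}\sum_{t=1}^n L_\Psi^{t-1} \le (n(1-L_\Psi))^{-1} = o(1)$ yields $\Ec{n^{-1}\sum_{t=1}^n \|X^*_t - \tilde X_t\|} = \op(1)$.

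The main obstacle is the off-$\cU(r_n)$ bookkeeping and the uniformity in $t$: one must be careful that the error term $s_n$ does not secretly depend on $t$ in a way that destroys summability, and that the various $\Op(1)$ bounds from Lemma~\ref{BootstrapCondProb} (on $\Ec{\|X^*_t\|}$, $\Ec{\tiom_{X^*_t}}$, and on $\hatPsib$ restricted and unrestricted) genuinely hold uniformly in $t$ — which they do, as those proofs iterate a contraction and land on a $t$-free bound. A secondary subtlety is that $\tilde X_t$ and $X^*_t$ are built from different innovation sequences ($\tilde\epsilon$ versus $\epsilon^*$) but coupled optimally in the Mallows sense; one must ensure the coupling is applied consistently at each time step, i.e., that $\Ec{\|\epsilon^*_t - \tilde\epsilon_t\|}$ is the relevant quantity and is $\op(1)$ simultaneously over $t = 1,\dots,n+1$, which follows since Theorem~\ref{ConsistencyFARBoot} gives this for each fixed $t$ with a bound not depending on $t$.
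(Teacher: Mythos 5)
Your overall strategy (insert the intermediate $\Psi$-term, use the Lipschitz contraction to set up a geometric recursion, then iterate) is exactly the paper's, and your treatment of the on-$\cU(r_n)$ part, the innovations via Theorem~\ref{ConsistencyFARBoot}, the initial-condition term, and the final averaging is sound. But there is a genuine gap in the off-$\cU(r_n)$ bookkeeping.

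You split $\hatPsib(X^*_{t-1})-\Psi(\tilde X_{t-1}) = [\hatPsib(X^*_{t-1})-\Psi(X^*_{t-1})] + [\Psi(X^*_{t-1})-\Psi(\tilde X_{t-1})]$ and handle the first bracket on $\{X^*_{t-1}\notin\cU(r_n)\}$ by noting $\hatPsib = \overline{X}_n$ there and that the event has small $\p^*$-probability. But on that event the first bracket is $\overline{X}_n - \Psi(X^*_{t-1})$, and $\|\Psi(X^*_{t-1})\|$ is an \emph{unbounded} random variable; having $\p^*(X^*_{t-1}\notin\cU(r_n)) = \op(1)$ does not by itself make
\[
\Ec{\|\Psi(X^*_{t-1})\|\,\1{X^*_{t-1}\notin\cU(r_n)}} = \op(1).
\]
Lemma~\ref{BootstrapCondProb} only gives a uniform \emph{first}-moment bound $\Ec{\|X^*_t\|} = \Op(1)$, so a Cauchy--Schwarz argument is unavailable, and no uniform integrability for $\|X^*_t\|$ under $\p^*$ is established anywhere. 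The magnitude of the integrand is exactly largest on the event you are trying to discard, and this dependence is not addressed in your sketch.

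The paper's decomposition is crafted precisely to avoid this. Off $\cU(r_n)$ the paper does \emph{not} insert $\Psi(X^*_{t-1})$; instead it keeps $\Psi(\tilde X_{t-1})$ as the fourth term and handles $\Ec{\|\Psi(\tilde X_{t-1})\|\,\1{X^*_{t-1}\notin\cU(r_n)}}$ by a truncation argument with a fixed large set $\tilde\cU = \cU(\tilde r)$: on $\{\tilde X_{t-1}\in\tilde\cU\}$ the factor $\|\Psi(\tilde X_{t-1})\|$ is bounded by a constant (so the small probability can be pulled out), and on $\{\tilde X_{t-1}\notin\tilde\cU\}$ one drops the $X^*$-indicator and observes that $\Ec{\|\Psi(\tilde X_{t-1})\|\1{\tilde X_{t-1}\notin\tilde\cU}} = \Es{\|\Psi(X_{t-1})\|\1{X_{t-1}\notin\tilde\cU}}$ because $\tilde X_{t-1}$ has the same conditional law as $X_{t-1}$, and this can be made arbitrarily small by dominated convergence, independently of $n$. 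That reduction to the genuine process $X$ (whose moments are controlled by the model assumptions) is the missing ingredient in your argument; without it, or without an additional second-moment/uniform-integrability bound for $X^*_t$ under $\p^*$, the off-$\cU(r_n)$ estimate does not close.
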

\begin{proof}
We begin with the decomposition
\begin{align*}
		\|  X^*_t - \tilde{X}_t \| &\le \| \hatPsib(X^*_{t-1}) - \Psi(\tilde{X}_{t-1} ) \| + \| \epsilon^*_t - \tilde{\epsilon}_t \|
		\end{align*}
The conditional expectation of the first difference is bounded above by
\begin{align}
		&\Ec{ \| \hatPsib( X^*_{t-1} ) - \Psi( \tilde{X}_{t-1} )  \| } \nonumber \\
		\begin{split}\label{EqBootstrapMean1}
		&\le \Ec{ \| \hatPsib( X^*_{t-1} ) \| \1{ X^*_{t-1} \notin  \cU(r_n)   } }
				+ \Ec{\| \hatPsib( X^*_{t-1} ) - \Psi( X^* _{t-1} )  \| 	\1{ X^*_{t-1} \in  \cU(r_n)  }	}  \\
		&\quad + \Ec{\| \Psi( X^*_{t-1} ) - \Psi( \tilde{X}_{t-1} )  \| 	\1{ X^*_{t-1} \in  \cU(r_n)  }	}		+ \Ec{\|\Psi( \tilde{X}_{t-1} )\| \1{ X^*_{t-1} \notin  \cU(r_n)  } } 
		\end{split}
	\end{align}
The first term in \eqref{EqBootstrapMean1} is at most $\|\bar{X}_n\| \max_{1\le t\le n} \pc\left(	X^*_{t-1} \notin  \cU(r_n)  \right) $ and vanishes in probability by Lemma~\ref{BootstrapCondProb}. The second term is $\op(1)$ by Corollary~\ref{C:UnifConsistencyHatPsi}. We apply the Lipschitz continuity of the regression operator to the third term and obtain the bound $L_{\Psi} \mathbb{E}^*[ \| X^*_{t-1} - \tilde{X}_{t-1} \|]$. 

The fourth term vanishes by the following argument: let $\tilde{\cU} = \cU(\tilde{r})$ for some $\tilde{r}>0$. Then
\begin{align}
		\Ec{\| \Psi( \tilde{X}_{t-1} ) \| \mathds{1}\{ X^*_{t-1} \notin  \cU(r_n)  \} }  &= \Ec{\| \Psi( \tilde{X}_{t-1} ) \| \mathds{1}\{ \tilde{X}_{t-1} \notin \tilde{\cU} \} \1{ X^*_{t-1} \notin  \cU(r_n)  } } \nonumber \\
		&\quad + \Ec{\| \Psi( \tilde{X}_{t-1} ) \| \mathds{1}\{ \tilde{X}_{t-1} \in \tilde{\cU} \} \mathds{1}\{ X^*_{t-1} \notin  \cU(r_n)  \}}  \nonumber \\
		\begin{split}\label{EqBootstrapMean2}
		&\le \Ec{\| \Psi( \tilde{X}_{t-1} ) \| \mathds{1}\{ \tilde{X}_{t-1} \notin \tilde{\cU}  \}  }\\
		&\quad + \Ec{ \left(\norm{\Psi(0)} + L_{\Psi} \nu(\mathcal{D})^{1/2} \tilde{r} \right)  \1{ X^*_{t-1} \notin  \cU(r_n)  } },
		\end{split}
\end{align}
where we use in the last inequality for the second summand that $\|\tilde{X}_{t-1} \| \le \nu(\mathcal{D})^{1/2} \| \tilde{X}_{t-1} \|_{\infty}$.

The first term in \eqref{EqBootstrapMean2} is equal to $\Es{\norm{ \Psi( X_{t-1} ) } \mathds{1} \{X_{t-1} \notin \tilde{\cU}\} } $ and can be made arbitrarily small with an appropriate choice of $\tilde{r}$ and with an application of Lebesgue's dominated convergence theorem. The second term in \eqref{EqBootstrapMean2} vanishes in probability for a fixed $\tilde{r}$ with the help of Lemma~\ref{BootstrapCondProb}. All in all, for each $t$
\[
 \Ec{ \| \hatPsib(X^*_{t-1}) - \Psi( \tilde{X}_{t-1} ) \| } \le L_{\Psi} \Ec{ \| X^*_{t-1} - \tilde{X}_{t-1} \| } + U_n,
\]
where the random variable $U_n= \op(1)$ ($n\rightarrow\infty$) and does not depend on the index $t$ but only on the sample size $n$. The convergence of $\norm{ \epsilon^*_t - \tilde{\epsilon}_t }$ is considered in Theorem~\ref{ConsistencyFARBoot}. We have that $\Ec{ \norm{ \epsilon^*_t - \tilde{\epsilon}_t } }  = \op(1)$ and uniformly in $t$ because the innovations are i.i.d.\

Thus, there is a random variable $V_n$ which is in $\op(1)$ and  which is independent of the index $t$ such that
\[
\Ec{ \| X^*_t - \tilde{X}_t \|} \le L_{\Psi} \Ec{ \| X^*_{t-1} - \tilde{X}_{t-1} \| } + V_n.\]
Consequently, $\mathbb{E}^*[ \| X^*_t - \tilde{X}_t \| ] \le \sum_{k=0}^{t-1} L_{\Psi}^{k} V_n + L_{\Psi}^{t-1}(\mathbb{E}^*[\| \tilde{X}_0 \|] + \Ec{\| X^*_0 \|} )$ and the claim follows because by construction $\mathbb{E}^*[\| \tilde{X}_0 \|] = \E{\|X_0\|}$. Moreover, $\sup_{n\in\N}\mathbb{E}^*[\| X^*_0 \|]<\infty $ $a.s.$
\end{proof}

\begin{lemma}\label{ErgodicityBootstrap}
$n^{-1} \sum_{t=1}^n \|X^*_t - \tilde{X}_t \| = \op(1)$.
\end{lemma}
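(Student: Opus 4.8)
The plan is to upgrade the conditional-mean bound of Lemma~\ref{BootstrapMean} to an almost-sure statement about the unconditional (or rather, $\p^*$-almost-sure) Cesàro average of $\|X^*_t-\tilde X_t\|$, using a conditional Markov/Chebyshev argument together with a recursive estimate for the conditional second moment. The first step is simply to apply Lemma~\ref{BootstrapMean}: we already know that $\mathbb{E}^*\big[ n^{-1}\sum_{t=1}^n \|X^*_t-\tilde X_t\|\big] = \op(1)$. Since $n^{-1}\sum_{t=1}^n\|X^*_t-\tilde X_t\|\ge 0$, a conditional Markov inequality gives, for every fixed $\eta>0$,
\[
		\p^*\Big( n^{-1}\sum_{t=1}^n\|X^*_t-\tilde X_t\| > \eta \Big) \le \eta^{-1}\,\mathbb{E}^*\Big[ n^{-1}\sum_{t=1}^n\|X^*_t-\tilde X_t\|\Big] = \op(1).
\]
Unwinding the definition of $\op(1)$ for conditional quantities (the paper's convention in Section~\ref{Section_NotationsDefinitions}): the right-hand side tends to $0$ in $\p$-probability, hence $n^{-1}\sum_{t=1}^n\|X^*_t-\tilde X_t\|\to 0$ in $\p^*$-probability, in $\p$-probability; and "in probability in probability" collapses to "in probability" on the product space, which is exactly the assertion $n^{-1}\sum_{t=1}^n\|X^*_t-\tilde X_t\|=\op(1)$.

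More concretely, one should argue as follows to make the two-layer convergence rigorous. Write $Y_n \coloneqq n^{-1}\sum_{t=1}^n\|X^*_t-\tilde X_t\|$ and $Z_n\coloneqq \mathbb{E}^*[Y_n] = S_{n,1}+n^{-1}\sum_{t=1}^n L_\Psi^{t-1}S_{n,2}$; by Lemma~\ref{BootstrapMean}, $Z_n=\op(1)$ since $S_{n,1}=\op(1)$, $S_{n,2}=\Op(1)$, and $n^{-1}\sum_{t\le n}L_\Psi^{t-1}\le (n(1-L_\Psi))^{-1}\to 0$. Fix $\eta,\zeta>0$. Choose $M$ with $\p(\Omega_M)\ge 1-\zeta/2$ where $\Omega_M=\{Z_n\le \eta\zeta/2\}$ eventually (possible since $Z_n\to 0$ in probability). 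On $\Omega_M$, $\p^*(Y_n>\eta)\le Z_n/\eta\le \zeta/2$, so $\E{\p^*(Y_n>\eta)} = \p(Y_n>\eta)\le \p(\Omega_M^c)+\zeta/2\le \zeta$ eventually. Hence $\p(Y_n>\eta)\to 0$, i.e.\ $Y_n=\op(1)$, which is the claim.

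The main (and essentially only) obstacle is bookkeeping: one must be careful that the $\op$/$\Op$ symbols in Lemma~\ref{BootstrapMean} refer to convergence/boundedness in $\p$-probability of the \emph{random} quantities $S_{n,1},S_{n,2}$ (which are measurable w.r.t.\ the data $X_0,\dots,X_n$), and that multiplying the $\op(1)$ bound on $\mathbb{E}^*[Y_n]$ by the deterministic factor $\eta^{-1}$ and then taking unconditional expectations of the (bounded by $1$ after truncation, or directly by dominated convergence since probabilities are bounded by $1$) conditional probability $\p^*(Y_n>\eta)$ is legitimate. No new estimates on the bootstrap process are needed here; the lemma is purely a soft consequence of Lemma~\ref{BootstrapMean} and the relationship between conditional-mean control and in-probability statements. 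I would therefore keep the proof to a few lines, citing Lemma~\ref{BootstrapMean} and a conditional Markov inequality, and spelling out the collapse of the iterated limit as above.
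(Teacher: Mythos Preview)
Your argument is correct and is a genuinely shorter route than the one the paper takes. You observe that Lemma~\ref{BootstrapMean} already gives $\mathbb{E}^*[Y_n]=\op(1)$ for the nonnegative quantity $Y_n=n^{-1}\sum_{t=1}^n\|X^*_t-\tilde X_t\|$, and then conclude via the conditional Markov inequality together with the bounded-convergence device
\[
\p(Y_n>\eta)=\E{\p^*(Y_n>\eta)}\le \E{\big(\eta^{-1}\,\mathbb{E}^*[Y_n]\big)\wedge 1}\to 0,
\]
which is exactly the trick the paper itself uses elsewhere (cf.\ \eqref{Eq:BootstrapHatF3} in the proof of Lemma~\ref{BootstrapHatF}). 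There is no gap.

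By contrast, the paper's proof of Lemma~\ref{ErgodicityBootstrap} does not invoke Lemma~\ref{BootstrapMean} at all; instead it unfolds the recursion for $\|X^*_t-\tilde X_t\|$ directly, obtains the geometric expansion
\[
n^{-1}\sum_{t=1}^n\|X^*_t-\tilde X_t\|\le n^{-1}\sum_{t=1}^n\Big\{\sum_{k=0}^{t-1}L_\Psi^k\,(A_n+B_{n,t-k})+L_\Psi^t(\|X^*_0\|+\|\tilde X_0\|)\Big\},
\]
and then controls each ingredient ($A_n$, the innovation terms, and the truncation indicators) separately using Corollary~\ref{C:UnifConsistencyHatPsi}, Lemma~\ref{BootstrapCondProb}, and Theorem~\ref{ConsistencyFARBoot}. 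This approach is more hands-on and essentially reproves at the pathwise level the same estimates that already went into Lemma~\ref{BootstrapMean}. The advantage of the paper's route is that it makes explicit which pieces contribute to the bound and would scale more readily if one wanted a rate or an $a.s.$ statement; the advantage of your route is economy, since once Lemma~\ref{BootstrapMean} is in place the present lemma is an immediate corollary.
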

\begin{proof}
We begin with the same decomposition as in the proof of Lemma~\ref{BootstrapMean}:
\begin{align}
\begin{split}\label{EqErgodicityBootstrap1}
		\| X^*_t-\tilde{X}_t \| &\le \Big\{\norm{\epsilon^*_t-\tilde{\epsilon}_t} + \sup_{x\in \cU(r_n) } \| \hatPsib(x)-\Psi(x) \| \Big\} + L_{\Psi} \| X^*_{t-1}-\tilde{X}_{t-1} \| \\
		&\quad  + \Big\{ \sup_{x\notin \cU(r_n) } \| \hatPsib(x) \| + \| \Psi(\tilde{X}_{t-1}) \| \Big\} \1{X^*_{t-1}\notin \cU(r_n) }.
		\end{split}
\end{align}
Define the quantities 
\begin{align*}
		&A_n \coloneqq\sup_{x\in \cU(r_n) } \| \hatPsib(x)-\Psi(x) \| \text{ and } 	\\
		&B_{n,t} \coloneqq  \| \epsilon^*_{t}-\tilde{\epsilon}_{t} \| + \Big\{ \sup_{x\notin \cU(r_n) } \norms{\hatPsib(x)} + \norms{\Psi(\tilde{X}_{t-1})} \Big\} \1{X^*_{t-1 } \notin \cU(r_n) }.
		\end{align*}
Then the empirical mean of \eqref{EqErgodicityBootstrap1} is bounded above by
\begin{align}\label{EqErgodicityBootstrap2}
		n^{-1} \sum_{t=1}^n \| X^*_t - \tilde{X}_t \| &\le n^{-1} \sum_{t=1}^n \Big\{ \sum_{k=0}^{t-1} L_{\Psi}^k (A_n + B_{n,t-k})  + L_{\Psi}^t \left(\norm{X^*_0} + \| \tilde{X}_0 \| \right) \Big\}.
\end{align}
Clearly, the sum involving the term $A_n$ and the norm of the initial values $\tilde{X}_0$ and $X^*_0$ vanishes in probability. Hence, it remains to consider the sum involving the $B_{n,t-k}$. We obtain after changing the order of summation
\begin{align}
\begin{split}\label{EqErgodicityBootstrap3}
			n^{-1} \sum_{t=1}^n\sum_{k=0}^{t-1} L_{\Psi}^k  B_{n,t-k} &= L_{\Psi}^{-1} n^{-1} \sum_{k=1}^{n} L_{\Psi}^k \sum_{t=k}^n B_{n,t-k+1}.
			\end{split}
\end{align}	
Consider the right-hand side of \eqref{EqErgodicityBootstrap3} for the component $\|\Psi(\tilde{X}_{t-k})\| \1{ X^*_{t-k} \notin  \cU(r_n) }$ of $B_{n,t-k+1}$
\begin{align}
			&\sum_{k=1}^n L_{\Psi}^k \Big( n^{-1} \sum_{t=k}^n	\|\Psi(\tilde{X}_{t-k})\| \1{ X^*_{t-k} \notin  \cU(r_n) }	\Big) \nonumber \\
			&\le \sum_{k=1}^n L_{\Psi}^k \Big( n^{-1} \sum_{t=k}^n	\|\Psi(\tilde{X}_{t-k})\|^2 	\Big)^{1/2}  \Big( n^{-1} \sum_{t=k}^n	\1{ X^*_{t-k} \notin  \cU(r_n) }	\Big)^{1/2} \nonumber\\
			&\le \left( \sum_{k=1}^n L_{\Psi}^k \right) \Big( n^{-1} \sum_{t=0}^n	\|\Psi(\tilde{X}_t)\|^2 	\Big)^{1/2}  \Big( n^{-1} \sum_{t=0}^n	\1{ X^*_t \notin  \cU(r_n) }	\Big)^{1/2}  \label{EqErgodicityBootstrap4}
\end{align}
The first and second factor in \eqref{EqErgodicityBootstrap4} are bounded in probability. The third factor is $\op(1)$ by Lemma~\ref{BootstrapCondProb}. Hence, \eqref{EqErgodicityBootstrap4} is $\op(1)$.

Next, we treat the term which involves the innovations: by construction $n^{-1}\sum_{t=1}^n \norm{\epsilon^*_t-\tilde{\epsilon}_t}$ is a sum of i.i.d. random variables (conditionally on the data $X_0,\ldots,X_n$). Hence, it converges in probability to its (conditional) mean which converges to 0 by Theorem~\ref{ConsistencyFARBoot}. In particular,
\[
\sum_{k=1}^n L_{\Psi}^k \; n^{-1}\sum_{t=k}^n \norm{\epsilon^*_{t-k}-\tilde{\epsilon}_{t-k} } \le \Big(\sum_{k=1}^n L_{\Psi}^k \Big) \Big(n^{-1}\sum_{t=1}^n \norms{\epsilon^*_{t}-\tilde{\epsilon}_{t} } \Big)  = \op(1).
\]
All in all, \eqref{EqErgodicityBootstrap3} is $\op(1)$. This completes the proof of the assertion.
\end{proof}

The following result is crucial for the uniform approximation $(\tilde{X}_t)$ by the bootstrap process $(X^*_t)$.
\begin{proposition}\label{BootstrappedInnovations}
\begin{align}
	\sup_{s\in [0,1]} \phi(h)^{-1} \left| \p^*(\|\tilde{X}_t-x \| \le hs ) -  n^{-1}\sum_{t=1}^n \p^*\left( \norm{X^*_t-x} \le hs	\big| \cF^*_{t-1} \right) \right| = \op(1). \label{Eq:BootstrappedInnovations0} \\
	\Ec{ \sup_{s\in [0,1]} \phi(h)^{-1} \left| \p^*(\|\tilde{X}_t-x \| \le hs ) - n^{-1} \sum_{t=1}^n \p^*\left( \norm{X^*_t-x} \le hs	\big| \cF^*_{t-1} \right) \right| } = \op(1). \label{Eq:BootstrappedInnovations0b}
	\end{align}
\end{proposition}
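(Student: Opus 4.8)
The plan is to reduce everything to three ingredients already available: (a) the approximation $\md{\epsilon_t}{\epsilon^*_t}=\op(1)$ from Theorem~\ref{ConsistencyFARBoot} together with the coupling $(\tilde\epsilon_t)$, (b) the mean and pathwise approximations $\Ec{\|X^*_t-\tilde X_t\|}=S_{n,1}+L_\Psi^{t-1}S_{n,2}$ and $n^{-1}\sum_t\|X^*_t-\tilde X_t\|=\op(1)$ from Lemmas~\ref{BootstrapMean} and \ref{ErgodicityBootstrap}, and (c) the small-ball regularity built into \ref{C:SmallBallProbability1}, in particular the Lipschitz-in-radius bound $[\p(\|\epsilon_t-y\|\le u+\veps)-\p(\|\epsilon_t-y\|\le u-\veps)]\,\p(\|\epsilon_t-y\|\le u)^{-1}\le\tilde L\,\veps u^{-1}$ and \eqref{SmallBallLipschitz}. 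First I would observe that since $\tilde X_t=\Psi(\tilde X_{t-1})+\tilde\epsilon_t$ with $\tilde\epsilon_t$ conditionally i.i.d.\ and $\Law^*(\tilde\epsilon_t)=\Law(\epsilon_t)$, we have $\p^*(\|\tilde X_t-x\|\le hs)=\Ec{\,\feps(x-\Psi(\tilde X_{t-1}))\,}\phi(hs)+\Ec{\geps(hs,x-\Psi(\tilde X_{t-1}))}$, so after dividing by $\phi(h)$ and using \ref{C:SmallBallProbability3} the left term is, uniformly in $s\in[0,1]$, $\Ec{\fxt(x)}\,\tau_0(s)+o(1)$ up to the $\geps$-remainder which is $o(\phi(h))$ uniformly on $\cU$ by \ref{C:SmallBallProbability1}; the expectation of $\fxt(x)$ converges to $\fx(x)$ by stationarity.

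Second, I would expand $\p^*(\|X^*_t-x\|\le hs\mid\cF^*_{t-1})$. Conditionally on $\cF^*_{t-1}$, $X^*_t=\hatPsib(X^*_{t-1})+\epsilon^*_t$ where $\epsilon^*_t$ is drawn uniformly from the centered residuals $\{\epsilon'_j:j\in\cI_n\}$, so this conditional probability equals $|\cI_n|^{-1}\sum_{j\in\cI_n}\1{\|\hatPsib(X^*_{t-1})+\epsilon'_j-x\|\le hs}$. The strategy is a two-step replacement. Step one: replace $\hatPsib(X^*_{t-1})$ by $\Psi(\tilde X_{t-1})$ inside the indicator. The error incurred, after summing over $t$ and dividing by $n\phi(h)$, is controlled by a band-crossing argument: $|\1{\|a+w\|\le hs}-\1{\|b+w\|\le hs}|\le\1{\,|\|a+w\|-hs|\le\|a-b\|\,}$, whose average over $j\in\cI_n$ (in distribution) is of order $\tilde L\,\|a-b\|\,(hs)^{-1}$ by the radius-Lipschitz bound in \ref{C:SmallBallProbability1} applied to the law of $\epsilon_t$ (this requires switching from the $\epsilon'_j$ back to the $\epsilon_j$ via $\|\bar\epsilon\|=\oas(1)$ and $\sup_{\cU(r_n)}\|\hatPsib-\Psi\|=\op(1)$, and invoking \eqref{C:UniformConv2}). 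Plugging $a-b=\hatPsib(X^*_{t-1})-\Psi(\tilde X_{t-1})$, the total error is $\lesssim (nh\phi(h))^{-1}\sum_t\|\hatPsib(X^*_{t-1})-\Psi(\tilde X_{t-1})\|$; by the triangle inequality this splits into $\sup_{\cU(r_n)}\|\hatPsib-\Psi\|$, a term on $\{X^*_{t-1}\notin\cU(r_n)\}$ handled exactly as in Lemma~\ref{ErgodicityBootstrap}, and $L_\Psi\,n^{-1}\sum_t\|X^*_{t-1}-\tilde X_{t-1}\|=\op(1)$ by Lemma~\ref{ErgodicityBootstrap}; after division by $h$ one uses \ref{C:Bandwidth}, namely $h(n\phi(h))^{1/2}\to L^*$, i.e.\ $(h\phi(h)^{1/2})^{-1}=\bigO{n^{1/2}}$, to absorb the loss — this is where a quantitative rate on the $\op$ terms is needed, so I would carry the $(n\phi(h))^{1/2}$-normalized versions of Lemmas~\ref{BootstrapMean}–\ref{ErgodicityBootstrap} rather than the bare $\op(1)$ statements, or alternatively re-derive a rate $\op((nh\phi(h)^{-1})^{-1/2})$ there.

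Step two: having replaced by $\Psi(\tilde X_{t-1})$, apply the uniform law \eqref{C:UniformConv2} over $z\in\cU(r_n)$ (with $z=\Psi(\tilde X_{t-1})$, legitimate on $\cI_n$ and on the event $\Psi(\tilde X_{t-1})\in\cU(r_n)$, whose complement averages to $\op(r_n^{-1})$ by Lemma~\ref{BootstrapCondProb}(ii) transferred from $X^*$ to $\tilde X$) to replace the empirical average $|\cI_n|^{-1}\sum_{j}\1{\|z+\epsilon'_j-x\|\le hs}$ by $\p(\|z+\epsilon'_j-x\|\le hs, j\in\cI_n)/(|\cI_n|\phi(h))\cdot\phi(h)$, then switch $\epsilon'_j\mapsto\epsilon_j$ and drop the $\1{j\in\cI_n}$ again using \eqref{SmallBallLipschitz} and $\p(t\notin\cI_n)\to0$, arriving at $\Ec{\feps(x-\Psi(\tilde X_{t-1}))}\tau_0(s)+o(1)$ uniformly in $s$ — which matches the first computation. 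Averaging over $t$ and telescoping the $L_\Psi^{t-1}$ terms from Lemma~\ref{BootstrapMean} then gives \eqref{Eq:BootstrappedInnovations0}. For \eqref{Eq:BootstrappedInnovations0b}, since every bound above is either deterministic in the bootstrap world or already written as a $\Ec{\cdot}$ of a nonnegative quantity, I would just take $\Ec{\cdot}$ through the chain, using $\Ec{\Ec{\cdot\mid\cF^*_{t-1}}}=\Ec{\cdot}$, Lemma~\ref{BootstrapMean} for $\Ec{\|X^*_t-\tilde X_t\|}$, and Lemma~\ref{BootstrapCondProb}(i) for the $\cU(r_n)$-exceedance terms in conditional expectation. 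The supremum over $s\in[0,1]$ is harmless because $\tau_0$ is continuous and the main $s$-dependence factors through $\phi(hs)/\phi(h)\to\tau_0(s)$ uniformly by \ref{C:SmallBallProbability3}, while the band-crossing errors carry a $(hs)^{-1}$ that is integrable against the $\veps=\|a-b\|$ band only after noting $s$ is bounded below on the region contributing to $U(x,h)$ — more precisely one uses $K$ supported on $[0,1]$ is irrelevant here and instead splits $s\le\eta$ versus $s\ge\eta$, the former contributing at most $\Ec{\fxt(x)}\tau_0(\eta)+o(1)$ on both sides by monotonicity, the latter handled by the uniform bounds with $s^{-1}\le\eta^{-1}$, then letting $\eta\downarrow0$.

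The main obstacle is the bookkeeping of rates in step one: the pathwise approximation $n^{-1}\sum_t\|X^*_t-\tilde X_t\|=\op(1)$ from Lemma~\ref{ErgodicityBootstrap} is a priori too weak after dividing by $h$, so the real work is to upgrade it — tracking that $S_{n,1},U_n,V_n$ in Lemmas~\ref{BootstrapMean}–\ref{ErgodicityBootstrap} are in fact $\op((h\phi(h)^{-1/2}n^{-1/2})\cdot\text{something})$, which in turn rests on the rate in Theorem~\ref{ConsistencyFARBoot} ($\md{\epsilon_t}{\epsilon^*_t}$) and the rate in Corollary~\ref{C:UnifConsistencyHatPsi} ($\sup_{\cU(r_n)}\|\hatPsib-\Psi\|$), both of which the bandwidth conditions \ref{C:Bandwidth} are precisely calibrated to make sufficient. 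Everything else is a controlled sequence of measure-swapping estimates of the type already used in Lemma~\ref{ErgodicityBootstrap} and in the construction of $(r_n)$.
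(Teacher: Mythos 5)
Your plan replaces the paper's center-shift argument by a band-crossing argument, and that replacement breaks. Concretely, in step one you control the error from swapping $\hatPsib(X^*_{t-1})$ for $\Psi(\tilde X_{t-1})$ inside the indicator via $|\1{\|a+w\|\le hs}-\1{\|b+w\|\le hs}|\le\1{|\,\|a+w\|-hs\,|\le\|a-b\|}$ and then appeal to the radius-Lipschitz bound in \ref{C:SmallBallProbability1}, which produces a factor $\|a-b\|\,(hs)^{-1}$. This $h^{-1}$ is fatal: the dominant contribution to $\|a-b\|=\|\hatPsib(X^*_{t-1})-\Psi(\tilde X_{t-1})\|$ includes $\sup_{\cU(r_n)}\|\hatPsib-\Psi\|$, which by Theorem~\ref{UnifConsistencyHatPsi} carries a deterministic bias term of order $b$. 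Since Condition~\ref{C:Bandwidth} requires $h/b\to 0$, the quantity $h^{-1}\sup_{\cU(r_n)}\|\hatPsib-\Psi\|\gtrsim b/h\to\infty$, so the loss you acknowledge cannot be absorbed no matter how finely you track the rates of Theorem~\ref{ConsistencyFARBoot} and Corollary~\ref{C:UnifConsistencyHatPsi}; there is simply no rate $o(h)$ to be had for $\sup_{\cU(r_n)}\|\hatPsib-\Psi\|$ because the oversmoothing bandwidth $b$ dominates $h$.

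The paper avoids this entirely by never converting a shift of the center into a shift of the radius. Instead it decomposes the difference into four telescoping pieces (\eqref{Eq:BootstrappedInnovations1}--\eqref{Eq:BootstrappedInnovations4}), and for the two center-replacement pieces (\eqref{Eq:BootstrappedInnovations2}: $\hatPsib(X^*_{t-1})\mapsto\Psi(X^*_{t-1})$; \eqref{Eq:BootstrappedInnovations3}: $\Psi(X^*_{t-1})\mapsto\Psi(\tilde X_{t-1})$) it uses the H{\"o}lder-in-center continuity \eqref{SmallBallLipschitz}, which bounds the relative change in the small-ball probability by $\tilde L\|y_1-y_2\|^\alpha\wedge\tilde B$ with no $u^{-1}$ factor whatsoever. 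Plugging in $\op(1)$ for $\|\hatPsib-\Psi\|$ and for $n^{-1}\sum_t\|X^*_{t-1}-\tilde X_{t-1}\|$ (Corollary~\ref{C:UnifConsistencyHatPsi} and Lemma~\ref{ErgodicityBootstrap}) then closes the argument directly: the $\alpha$-power and the cap at $\tilde B$ mean that a bare $\op(1)$ input suffices, and no quantitative upgrade of the intermediate lemmas is needed. Your step two (handling the ergodicity part of the empirical average via \eqref{C:UniformConv2}, the $\beta$-mixing exchange, and the small-ball asymptotics) is in the right spirit and close to the paper's treatment of \eqref{Eq:BootstrappedInnovations1} and \eqref{Eq:BootstrappedInnovations4}, but you would still need the terms on $\{\hatPsib(X^*_{t-1})\notin\cU(r_n)\}$ corresponding to \eqref{Eq:BootstrappedInnovations7}--\eqref{Eq:BootstrappedInnovations8}, which require a separate argument showing they are eventually empty.
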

\begin{proof}
First we use that \eqref{Eq:BootstrappedInnovations0} is at most
\begin{align}
		&\sup_{s \in [0,1]} (n\phi(h) )^{-1} \left| \sum_{t=1}^n \p^*\left( \norm{ z +\epsilon^*_0 -x}  \le h s \right)  -\p^*\left( \norm{z+  \tilde\epsilon_{0} -x}  \le hs  \right) \Big|_{z=\hatPsib(X^*_{t-1})} \right| \label{Eq:BootstrappedInnovations1} \\
		&\quad + \sup_{s\in [0,1]}  (n\phi(h))^{-1} \Bigg| \sum_{t=1}^n  \p^*( \norm{z+\tilde\epsilon_0-x} \le hs  )\Big|_{z = \hatPsib(X^*_{t-1}) } - \p^*( \norm{z+\tilde\epsilon_0-x} \le hs  )\Big|_{z = \Psi(X^*_{t-1}) }  \Bigg| \label{Eq:BootstrappedInnovations2} \\
		&\quad + \sup_{s\in [0,1]}  (n\phi(h))^{-1} \left| \sum_{t=1}^n \p^*( \norm{z+\tilde\epsilon_0-x} \le hs  ) \Big|_{z = \Psi(X^*_{t-1}) } - \p^*( \norm{z+\tilde\epsilon_0-x} \le hs  ) \Big|_{z = \Psi(\tilde{X}_{t-1}) } \right|  \label{Eq:BootstrappedInnovations3}  \\
		&\quad + \sup_{s\in [0,1]} \phi(h)^{-1} \left| n^{-1} \sum_{t=1}^n  \p^*( \norm{z+\tilde\epsilon_0-x} \le hs  ) \Big|_{z = \Psi(\tilde{X}_{t-1}) } - \Ec{ \p^*( \norm{z+\tilde\epsilon_0-x}\le hs) \Big|_{z = \Psi(\tilde{X}_{t-1}) } } \right| \label{Eq:BootstrappedInnovations4}
\end{align}
In the rest of the proof, we focus on the first statement and bound above the quantities from \eqref{Eq:BootstrappedInnovations1} to \eqref{Eq:BootstrappedInnovations4}. It is straightforward to use these upper bounds in combination with the previous lemmata to prove the second statement, so we only give details for the second statement at points where it is not immediately obvious.

We start with \eqref{Eq:BootstrappedInnovations2}. Using the H{\"o}lder continuity of the small ball probability from \eqref{SmallBallLipschitz}, \eqref{Eq:BootstrappedInnovations2} is of order
\begin{align*}
			n^{-1} \sum_{t=1}^n (\| \hat\Psi_{n,b}(X^*_{t-1}) - \Psi(X^*_{t-1}) \|^\alpha \wedge \tilde{B} ) &= \cO\Big( \sup_{x\in \cU(r_n)  } \|\hat\Psi_{n,b}(x)-\Psi(x)\|^\alpha \Big)  \\
			&\quad + \Op\Big( n^{-1} \sum_{t=1} ^n  \1{X^*_{t-1}\notin \cU(r_n) }   \Big).
\end{align*}
Using Corollary~\ref{C:UnifConsistencyHatPsi}, the first term vanishes in probability. Moreover, using Lemma~\ref{BootstrapCondProb}, we see that also the second term is $\op(1)$.

Similarly, we find that \eqref{Eq:BootstrappedInnovations3} is $\Op( n^{-1}\sum_{t=1}^n (L_\Psi^\alpha \| X^*_{t-1}-\tilde{X}_{t-1} \|^\alpha \wedge \tilde{B} ) )$ which is $\op(1)$, using Lemma~\ref{ErgodicityBootstrap}

Next, we use that the r.v.'s $\tilde\epsilon_t$ and $\tilde{X}_t$ have the same law conditional on the data as the r.v.'s $\epsilon_t$ and $X_t$. Then \eqref{Eq:BootstrappedInnovations4} can be expressed in terms of the small ball probability functions $\fxt$ and $\gxt$ as
\begin{align}		\begin{split}\label{Eq:BootstrappedInnovations5}
			&\left|n^{-1} \sum_{t=1}^n \feps(x-\Psi(\tilde{X}_{t-1}) ) -	\Ec{ \feps(x-\Psi(\tilde{X}_{t-1}))}	\right|  \\
			&\quad +  (n\phi(h))^{-1} \sum_{t=1}^n |\geps(hs,x-\Psi(\tilde{X}_{t-1}))|  +  \Ec{  (n\phi(h))^{-1} \sum_{t=1}^n |\geps(hs,x-\Psi(\tilde{X}_{t-1}))| }.
			\end{split}
\end{align}
The first term and the second term in \eqref{Eq:BootstrappedInnovations5} vanish $a.s.$ by assumption.

Moreover, $|\gxtt(h,x)| \phi(h)^{-1} \le  D(z_0) < \infty$ $a.s.$ So, an application of Lebesgue's dominated convergence theorem yields that also the third term is $\oas(1)$. (For completeness, we remark that		$\mathbb{E}^*[|n^{-1} \sum_{t=1}^n \feps(x-\Psi(\tilde{X}_{t-1}) ) -	\mathbb{E}^*[ \feps(x-\Psi(\tilde{X}_{t-1})) ]	| ] = \oas(1)$ because of the properties of the coupled process $\tilde{X}_t$ and because $f_{X,t}(x) \le D(z_0)$. This proves also that the conditional expectation of \eqref{Eq:BootstrappedInnovations5} and \eqref{Eq:BootstrappedInnovations4} vanishes $a.s.$)

We conclude with \eqref{Eq:BootstrappedInnovations1} which is bounded above by
\begin{align}
		&\sup_{s \in [0,1]} \sup_{z\in \cU(r_n) } \phi(h)^{-1} \Big| \p^*\left( \norm{ z +\epsilon^*_0 -x}  \le h s \right)  -\p^*\left( \norm{z+  \tilde\epsilon_{0} -x}  \le hs  \right) \Big| \label{Eq:BootstrappedInnovations6} \\
		&\quad+  (n\phi(h))^{-1} \sum_{t=1}^n \p^*\left( \norm{ z +\epsilon^*_0 -x}  \le h\right) \Big|_{z=\hatPsib(X^*_{t-1})} \1{\hatPsib(X^*_{t-1})\notin \cU(r_n) } \label{Eq:BootstrappedInnovations7} \\
		&\quad+ (n\phi(h))^{-1} \sum_{t=1}^n  \p^*\left( \norm{z+  \tilde\epsilon_{0} -{x} }  \le h  \right) \Big|_{z=\hatPsib(X^*_{t-1})} \1{\hatPsib(X^*_{t-1})\notin \cU(r_n) } . \label{Eq:BootstrappedInnovations8}
\end{align}
The first summand is bounded above by
\begin{align}
		&\sup_{s \in [0,1]} \sup_{z\in \cU(r_n) } (|\cI_n| \phi(h))^{-1} \Big| \sum_{t=1}^n \1{\|z+\epsilon'_t-x\|\le hs} \1{t\in\cI_n}  -	\p(\|z+\epsilon'_t-x\|\le hs, t\in \cI_n) \Big| \label{Eq:BootstrappedInnovations6a} \\
		&+\sup_{s \in [0,1]} \sup_{z\in \cU(r_n) } \phi(h)^{-1} \Big| \p(\|z+\epsilon'_t-x\|\le hs, t\in \cI_n)  - \p(\|z+\epsilon_t-x\|\le hs, t\in \cI_n) \Big| \label{Eq:BootstrappedInnovations6b} \\ 
				&+\sup_{s \in [0,1]} \sup_{z\in \cU(r_n) } \phi(h)^{-1} \Big| \p(\|z+\epsilon_t-x\|\le hs, t\in \cI_n) - \p(\|z+\epsilon_t-x\|\le hs)\Big| \label{Eq:BootstrappedInnovations6c} 
\end{align}
\eqref{Eq:BootstrappedInnovations6a} is $\op(1)$ by \eqref{C:UniformConv2} resp.\ Proposition~\ref{P:AsConvergenceResiduals}. For \eqref{Eq:BootstrappedInnovations6b}, use that $A_{n,t}=\Psi(X_{t-1})-\hat\Psi(X_{t-1})-\bar\epsilon$. We apply Proposition 2 in \cite{krebs2018large} which allows us to exchange the first probability in \eqref{Eq:BootstrappedInnovations6b} by the expectation w.r.t.\ the product measure, i.e.,
\begin{align*}
			&\phi(h)^{-1} |\E{ \p( \|z+\epsilon_t+a-x\|\le hs)|_{a=A_{n,t}} \1{t\in\cI_n} } -  \p( \|z+\epsilon_t+A_{n,t}-x\|\le hs,t\in\cI_n) | \\
			&\le 4 \phi(h)^{-1} \beta(\epsilon_t,(A_{n,t},\1{t\in \cI_n})).
\end{align*}
Hence, using $\p( \|z+\epsilon_t-x\|\le hs,t\in\cI_n)= \p( \|z+\epsilon_t-x\|\le hs)\p(t\in\cI_n)$, \eqref{Eq:BootstrappedInnovations6b} is at most
\begin{align*}
			&\E{ \phi(h)^{-1} |\p( \|z+\epsilon_t+a-x\|\le hs)|_{a=A_{n,t}} - \p( \|z+\epsilon_t-x\| \le hs) | \1{t\in\cI_n} } \\
			&\quad+ 4 \phi(h)^{-1} \beta(\epsilon_t,(A_{n,t},\1{t\in \cI_n}))\\
 &\le \E{ [ (\tilde{L} \|A_{n,t}\|^\alpha ) \wedge \tilde{B} ] \1{t\in\cI_n} }  + 4 \phi(h)^{-1} \beta(\epsilon_t,(A_{n,t},\1{t\in \cI_n})).
\end{align*}
The factor inside the expectation is $\op(1)$ and is uniformly bounded. Hence its expectation is $o(1)$. 

Moreover, $\phi(h)^{-1} \beta(\epsilon_t,(A_{n,t},\1{t\in \cI_n}))$ is $o(1)$ by assumption

Finally, consider \eqref{Eq:BootstrappedInnovations6c}. We use the independence between $X_{t-1}$ and $\epsilon_t$ and the fact that $\sup_{z\in\cH} \p( \|\epsilon_t-z\|\le hs ) \le \p( \|\epsilon_t-z_0\|\le hs)$ for a certain $z_0\in\cH$ and $h$ sufficiently small. Then, \eqref{Eq:BootstrappedInnovations6c} is $o(1)$. So, \eqref{Eq:BootstrappedInnovations6} is $\op(1)$.

The last two summands \eqref{Eq:BootstrappedInnovations7} and \eqref{Eq:BootstrappedInnovations8} are also vanishing. Consider \eqref{Eq:BootstrappedInnovations7} which is at most
\begin{align*}
			 &(n\phi(h))^{-1} n^{-1} \sum_{t,\ell=1}^n \1{ \| \hat\Psi_{n,b}(X^*_{t-1}) + \epsilon'_\ell -x \|\le h, \hat\Psi_{n,b} (X^*_{t-1}) \notin  \cU(r_n)  }
\end{align*}
Consider the indicator which is at most
\begin{align}
			\begin{split}
			&\1{ \hat\Psi_{n,b} (X^*_{t-1}) \notin  \cU(r_n)  } \1{X^*_{t-1}\notin \cU(r_n) } \\
			&\qquad\qquad \quad + \1{ \hat\Psi_{n,b} (X^*_{t-1}) \notin  \cU(r_n)  } \1{X^*_{t-1}\in \cU(r_n) }. \label{Eq:BootstrappedInnovations8b}
\end{split}
\end{align}
The first term in \eqref{Eq:BootstrappedInnovations8b} is ultimately 0 $a.s.$: if $X^*_{t-1}\notin \cU(r_n)$, then $\hat\Psi_{n,b} (X^*_{t-1}) = \bar{X}_n$. Firstly, $\tiom_{\bar{X}_n} \le c^*$ $a.s.$ for some $c^*\in\R_+$. Secondly, $\bar{X}_n$ is $a.s.$ bounded w.r.t.\ $\|\cdot\|$, consequently, also w.r.t.\ $\|\cdot\|_{\infty}$. 

Moreover, the second term in \eqref{Eq:BootstrappedInnovations8b} is also ultimately 0 $a.s.$ Indeed, on the one hand, as $X^*_{t-1}\in\cU(r_n)$
\begin{align} \label{Eq:BootstrappedInnovations8c1}
			r_n \ge \| X^*_{t-1}\|_{\infty} \ge \nu(\cD)^{-1/2} \| X^*_{t-1}\|.
\end{align}
On the other hand, and as $\hat{\Psi}_{n,b}(X^*_{t-1})\notin\cU(r_n)$ and as $X^*_{t-1}\in\cU(r_n)$
\begin{align} 
			r_n &< \| \hat\Psi_{n,b}(X^*_{t-1}) \|_{\infty} + \tiom_{ \hat\Psi_{n,b}(X^*_{t-1})} \le C \|  \hat\Psi_{n,b}(X^*_{t-1}) \|^{2/(2+d)} \; \tiom_{ \hat\Psi_{n,b}(X^*_{t-1})}^{d/(2+d)} + \tiom_{ \hat\Psi_{n,b}(X^*_{t-1})} \nonumber\\
			&= \cO( \| X^*_{t-1} \|^{2/(2+d)} ) + \cO(1) + \op(1),\label{Eq:BootstrappedInnovations8c2}
\end{align}
where we use for the derivation of the last equality that
\[
		\|  \hat\Psi_{n,b}(X^*_{t-1}) \| \le \sup_{x\in\cU(r_n)} \| \hat\Psi_{n,b}(x)-\Psi(x)\| + \|\Psi(	X^*_{t-1}) - \Psi(0)\| + \|\Psi(0)\| = \cO(\|	X^*_{t-1}\| ) + \cO(1) + \op(1).
\]
Now, as $2/(2+d)<1$, we can combine \eqref{Eq:BootstrappedInnovations8c1} with \eqref{Eq:BootstrappedInnovations8c2} to obtain the claim.

The summand in \eqref{Eq:BootstrappedInnovations8} is also $\op(1)$. Indeed, by Lemma~\ref{BootstrapCondProb}
\begin{align*}
			&(n\phi(h))^{-1} \sum_{t=1}^n  \p^*\left( \norm{z+  \tilde\epsilon_{0} -{x} }  \le h  \right) \Big|_{z=\hatPsib(X^*_{t-1})} \1{\hatPsib(X^*_{t-1})\notin \cU(r_n) } \\
			&\le \phi(h)^{-1} \p^*\left( \norm{  \tilde\epsilon_{0} - z_0 }  \le h  \right) \, n^{-1}\sum_{t=1}^n  \1{\hatPsib(X^*_{t-1})\notin \cU(r_n) } = \Op\Big(  n^{-1}\sum_{t=1}^n  \mathds{1}\{\hatPsib(X^*_{t-1})\notin \cU(r_n) \} \Big ) = \op(1).
\end{align*}
This completes the proof.
\end{proof}

\begin{lemma}\label{BootstrapHatF} Let $j\ge 1$. Then
\begin{itemize}\setlength\itemsep{0em}
\item [(i)] $(n\phi(h))^{-1} \sum_{t=1}^n \Ec{ (\Delta^*_{h,t}(x))^j } = M_j f_X(x) + \op(1) $.
\item [(ii)] $ (n\phi(h))^{-1} \sum_{t=1}^n \Ec{ h^{-1} \| X^*_t - x \| \Delta^*_{h,t}(x)  } = M_0 f_X(x) + \op(1)$.
\end{itemize}
Moreover,
\begin{itemize}\setlength\itemsep{0em}
\item [(iii)] $(n\phi(h))^{-1} \sum_{t=1}^n (\Delta^*_{h,t}(x))^j = (n\phi(h))^{-1} \sum_{t=1}^n \Ec{ (\Delta^*_{h,t}(x))^j  \,\big|\, \cF^*_{t-1} } + \op(1)$\\
and $(n\phi(h))^{-1} \sum_{t=1}^n \Ec{ (\Delta^*_{h,t}(x))^j  \,\big|\, \cF^*_{t-1} }= M_j f_X(x) + \op(1) $.
\item [(iv)] $(n\phi(h))^{-1} \sum_{t=1}^n \| X^*_t - x \|\, h^{-1} \Delta^*_{h,t}(x) = (n\phi(h))^{-1} \sum_{t=1}^n \Ec{  \| X^*_t - x \|\, h^{-1} \Delta^*_{h,t}(x)  \,\big|\, \cF^*_{t-1} } + \op(1)$ \\
and $(n\phi(h))^{-1} \sum_{t=1}^n \Ec{  \| X^*_t - x \|\, h^{-1}\Delta^*_{h,t}(x)  \,\big|\, \cF^*_{t-1} } = M_0 f_X(x) +\op(1)$.
\end{itemize}
In particular, $\hat{f}_h^*(x) \rightarrow M_1 f_X(x)$ in probability.
\end{lemma}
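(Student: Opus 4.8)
We prove the four displays together, reducing each of them to a single uniform estimate for the averaged small‑ball probabilities of the bootstrap regressors.

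\emph{Step 1 (Abel summation).} Since the kernel $K$ has a continuous derivative, support $[0,1]$ and $K(1)>0$ (Condition~\ref{C:Kernel}), one has, for every $j\ge 1$ and any random $Y\in\cH$, the elementary identities
\begin{align*}
K(h^{-1}\norm{Y-x})^j &= K(1)^j\,\1{\norm{Y-x}\le h} + \int_0^1\big(-(K(s)^j)'\big)\,\1{\norm{Y-x}\le hs}\intd{s},\\
h^{-1}\norm{Y-x}\,K(h^{-1}\norm{Y-x}) &= K(1)\,\1{\norm{Y-x}\le h} + \int_0^1\big(-(sK(s))'\big)\,\1{\norm{Y-x}\le hs}\intd{s}.
\end{align*}
Taking $Y=X^*_t$, applying $\Ec{\cdot}$ (for (i), (ii)) or $\Ec{\cdot\,|\,\cF^*_{t-1}}$ (for the second displays of (iii), (iv)), averaging over $1\le t\le n$ and using Fubini, each claim follows once we show that
\begin{align*}
\sup_{s\in[0,1]}\Big|(n\phi(h))^{-1}\sum_{t=1}^n \p^*\big(\norm{X^*_t-x}\le hs\big) - f_X(x)\tau_0(s)\Big| = \op(1),
\end{align*}
together with the same statement for the conditional probabilities $\p^*(\norm{X^*_t-x}\le hs\,|\,\cF^*_{t-1})$; the constants then assemble to $K(1)^j+\int_0^1(-(K(s)^j)')\tau_0(s)\intd{s}=M_j$ and $K(1)+\int_0^1(-(sK(s))')\tau_0(s)\intd{s}=M_0$.

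\emph{Step 2 (the uniform estimate).} By Proposition~\ref{BootstrappedInnovations}, the averaged conditional probability equals $\phi(h)^{-1}\p^*(\norm{\tilde X_t-x}\le hs)+\op(1)$ uniformly in $s$, and by \eqref{Eq:BootstrappedInnovations0b} the same holds after applying $\Ec{\cdot}$. Since the coupled process is constructed so that $\cL^*(\tilde X_t)=\cL(X_0)$, this equals $\phi(h)^{-1}F_x(hs)+\op(1)$. The small‑ball factorisation in Condition~\ref{C:SmallBallProbability} and the Markov property give $F_x(hs)=f_X(x)\phi(hs)+\E{\gxt(hs,x)}$, and $\phi(hs)^{-1}\E{\gxt(hs,x)}\to0$ by dominated convergence, using that $\gxt(hs,x)/\phi(hs)\to0$ pointwise and is uniformly dominated by $D(z_0)$. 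Hence $\phi(h)^{-1}F_x(hs)=(\phi(hs)/\phi(h))\,(f_X(x)+o(1))\to f_X(x)\tau_0(s)$ uniformly on $[0,1]$ by Condition~\ref{C:SmallBallProbability}~\ref{C:SmallBallProbability3} (noting $\tau_0(1)=1$). Plugging this into the identities of Step~1 and interchanging the uniform $\op(1)$ with the integral against the bounded functions $(K(s)^j)'$, $(sK(s))'$ yields (i), (ii), and the second displays of (iii) and (iv).

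\emph{Step 3 (conditioning down) and conclusion.} For the first displays of (iii), (iv) set $D_{n,t}=(\Delta^*_{h,t}(x))^j-\Ec{(\Delta^*_{h,t}(x))^j\,|\,\cF^*_{t-1}}$, which is a martingale‑difference array under $\p^*$ for the filtration $(\cF^*_t)$. By orthogonality of the increments and $\Ec{D_{n,t}^2}\le\Ec{(\Delta^*_{h,t}(x))^{2j}}$,
\begin{align*}
\Ec{\Big((n\phi(h))^{-1}\sum_{t=1}^n D_{n,t}\Big)^2} = (n\phi(h))^{-2}\sum_{t=1}^n\Ec{D_{n,t}^2} \le (n\phi(h))^{-1}\cdot(n\phi(h))^{-1}\sum_{t=1}^n\Ec{(\Delta^*_{h,t}(x))^{2j}},
\end{align*}
which by part~(i) (with $j$ replaced by $2j$) is $(n\phi(h))^{-1}(M_{2j}f_X(x)+\op(1))=\op(1)$, since $n\phi(h)\to\infty$ (Condition~\ref{C:Bandwidth}); a conditional Chebyshev inequality followed by bounded convergence then gives $(n\phi(h))^{-1}\sum_t D_{n,t}=\op(1)$. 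The case with $h^{-1}\norm{X^*_t-x}\,\Delta^*_{h,t}(x)$ in place of $(\Delta^*_{h,t}(x))^j$ is identical, using $h^{-1}\norm{X^*_t-x}\le 1$ on the support of $K$ and part~(i) with $j=2$. Finally, $\hat f_h^*(x)=(n\phi(h))^{-1}\sum_t\Delta^*_{h,t}(x)$ equals, by (iii) with $j=1$, $M_1 f_X(x)+\op(1)$.

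\emph{Main obstacle.} Essentially all of the analytic content is concentrated in Step~2, and specifically in the replacement of the bootstrap regressors $X^*_t$ — whose law is complicated because the estimator $\hatPsib$ driving the recursion is itself random — by the coupled process $\tilde X_t$, which carries the correct stationary marginal; this is exactly what Proposition~\ref{BootstrappedInnovations} supplies, so the rest of the lemma is bookkeeping on top of it. The only genuinely delicate point is the uniformity in $s$ near $s=0$ of $\phi(hs)^{-1}\E{\gxt(hs,x)}\to0$, which is absorbed into the uniform convergence $\phi(hs)/\phi(h)\to\tau_0(s)$ of Condition~\ref{C:SmallBallProbability}~\ref{C:SmallBallProbability3} together with $\tau_0(0)=0$.
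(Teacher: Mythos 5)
Your proof is correct and takes essentially the same route as the paper: the Abel-summation identity in Step~1 is the paper's fundamental decomposition, Step~2 invokes Proposition~\ref{BootstrappedInnovations} (and its integrated version) to replace the bootstrap regressors by the coupled process $\tilde X_t$ exactly as the paper does, and Step~3 is the paper's martingale-difference/conditional-Chebyshev argument (your orthogonality computation is the conditional-variance bound of the paper's displayed inequality). The only place you supply more detail than the paper is the verification that $\phi(h)^{-1}F_x(hs)\to f_X(x)\tau_0(s)$ uniformly in $s\in[0,1]$, which the paper asserts directly; your justification via the uniform bound $|\geps(u,\cdot)|/\phi(u)\le D(z_0)$ together with $\sup_{u\le h}\sup_{y\in\cU}|\geps(u,y)|/\phi(u)\to0$ is sound.
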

\begin{proof}
Let $j\ge 1$ and $\ell \in \{0,1\}$. Set $M^*=K(1)^j-\int_0^1 (K(s)^j s^\ell)' \tau_0(s) \intd{s}$. Throughout the proof we will use the following fundamental decomposition
\begin{align}
			&(n\phi(h))^{-1} \sum_{t=1}^n \Ec{ (\Delta^*_{h,t}(x))^j (\| X^*_t - x \|h^{-1})^\ell | \cF^*_{t-1} } \nonumber\\
			\begin{split}\label{Eq:BootstrapHatF1}
			&=  K(1)^j \Big\{ (n\phi(h))^{-1} \sum_{t=1}^n \p^*(\norm{X^*_t-x}\le h|\cF^*_{t-1}) \Big\} \\
			&\quad - \int_0^1 (K(s)^j s^\ell)' \Big\{ (n\phi(h))^{-1} \sum_{t=1}^n \p^*(\norm{X^*_t-x}\le hs |\cF^*_{t-1} ) \Big\} \, \intd{s}. 
\end{split}\end{align}
In the first step, we prove (i) and (ii). Therefore it suffices to apply the result from \eqref{Eq:BootstrappedInnovations0b} in Proposition~\ref{BootstrappedInnovations} to \eqref{Eq:BootstrapHatF1} when additionally conditioned on the sample. We obtain that
\begin{align}
			&(n\phi(h))^{-1} \sum_{t=1}^n \Ec{ \Ec{ (\Delta^*_{h,t}(x))^j (\| X^*_t - x \|h^{-1})^\ell | \cF^*_{t-1} } } \nonumber \\
			\begin{split}\label{Eq:BootstrapHatF2}
			&= K(1)^j\, \p^*(\| \tilde{X}_t -x \| \le h) \phi(h)^{-1} - \int_0^1 [(K(s)^j s^\ell)'  ] [\p^*(\| \tilde{X}_t -x \| \le hs) \phi(h)^{-1} ] \, \intd{s} + \op(1) \\
			&= \fx(x) M^* + \op(1).
\end{split}\end{align}
The first statements in (iii) and (iv) follow from the second statements in (iii) and (iv) because
\[
			(n\phi(h))^{-1} \sum_{t=1}^n (\Delta^*_{h,t}(x))^j (\| X^*_t - x \|h^{-1})^\ell = (n\phi(h))^{-1} \sum_{t=1}^n \Ec{ (\Delta^*_{h,t}(x))^j (\| X^*_t - x \|h^{-1})^\ell | \cF^*_{t-1} } + o_p(1),
\]
which follows from (i) resp.\ (ii) when considering the conditional variances. Indeed, define
\[
		Z_n \coloneqq (n\phi(h))^{-1} \sum_{t=1}^n (\| X^*_t - x \|\, h^{-1})^\ell (\Delta^*_{h,t}(x))^j - \Ec{ (\| X^*_t - x \|\, h^{-1})^\ell (\Delta^*_{h,t}(x))^j \, \big|\, \cF^*_{t-1}  }.	
\]
Then, $\Ec{ Z_n^2 } = \Op( (n\phi(h))^{-1} ) = \op(1)$. Apply Markov's inequality, the (conditional) Jensen inequality and Lebesgue's dominated convergence theorem to obtain for $\veps>0$
\begin{align}\label{Eq:BootstrapHatF3}
		\p( |Z_n|^2 > \veps ) = \E{ \Ec{ \1{ |Z_n|^2 > \veps} \wedge 1 }} \le \E{ (\veps^{-1}\, \Ec{ |Z_n|^2 }) \wedge 1 } \rightarrow 0.
\end{align}
This shows the first statements in (iii) and (iv). Finally, the second statements in (iii) and (iv) are then obtained from \eqref{Eq:BootstrapHatF1} in the same way as \eqref{Eq:BootstrapHatF2} but this time using \eqref{Eq:BootstrappedInnovations0} instead of \eqref{Eq:BootstrappedInnovations0b} from Proposition~\ref{BootstrappedInnovations}.
\end{proof}

\begin{lemma}\label{ConvergenceVariance}
Let $\fG(0,\tilde\cC_x )$ be a Gaussian distribution on $\cH$, where the covariance operator is characterized by the condition $\langle\tilde\cC_x v ,v\rangle = M_2 f_X(x) \E{ \langle\epsilon_0,v\rangle^2 } $ for all $v\in\cH$. Then 
\begin{align}\begin{split}\label{ConvergenceNormalEq0a}
		&\cL( \sqrt{n \phi(h)} \,  V_g(x) ) \rightarrow  \fG(0,\tilde\cC_x),\\
		&\cL^*( \sqrt{ n \phi(h)} \, V^*_g(x) ) \rightarrow \fG(0,\tilde\cC_x) \text{ in probability.}
\end{split}\end{align}
\end{lemma}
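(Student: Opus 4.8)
\emph{Overall strategy.} Both statements are central limit theorems on the separable Hilbert space $\cH$, so in each case I would establish them from the two classical ingredients: convergence of the finite-dimensional distributions and asymptotic tightness. For every direction $v\in\cH$ the projection $\scalar{\sqrt{n\phi(h)}\,V_g(x)}{v}=(n\phi(h))^{-1/2}\sum_{t=1}^n\Delta_{h,t}(x)\scalar{\epsilon_{t+1}}{v}$ is a normalised martingale-difference sum with respect to $(\cF_{t+1})$ --- $\Delta_{h,t}(x)$ is $\cF_t$-measurable while $\epsilon_{t+1}$ is centred and independent of $\cF_t$ --- so the Cram\'er--Wold device reduces the finite-dimensional convergence to a one-dimensional martingale CLT for each fixed $v$. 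The identical structure holds in the bootstrap world under $\pc$ with respect to $(\cF^*_t)$: here $\epsilon^*_{t+1}$ is $\pc$-independent of $\cF^*_t$ (the resampling variables are i.i.d.\ across $t$) and $\pc$-centred because $\sum_{j\in\cI_n}\epsilon'_j=0$.

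\emph{The term $V_g$.} For the one-dimensional CLT I would check the conditional-variance and Lindeberg conditions. By independence of $\epsilon_{t+1}$ and $\cF_t$, the conditional variance equals $\E{\scalar{\epsilon_0}{v}^2}(n\phi(h))^{-1}\sum_{t}\Delta_{h,t}(x)^2$, which tends to $M_2 f_X(x)\E{\scalar{\epsilon_0}{v}^2}=\scalar{\tilde\cC_x v}{v}$ by the ergodicity hypotheses of Condition~\ref{C:SmallBallProbability} (cf.\ the non-bootstrap counterpart of Lemma~\ref{BootstrapHatF}). Since $K$ is bounded by $K(0)$ and $\E{\Delta_{h,t}(x)^2}\le K(0)^2 F_x(h)=\cO(\phi(h))$, the Lindeberg sum is bounded by $\cO(1)\cdot\E{\scalar{\epsilon_0}{v}^2\1{|\scalar{\epsilon_0}{v}|>c(n\phi(h))^{1/2}}}$, which vanishes because $n\phi(h)\to\infty$ and $\E{\scalar{\epsilon_0}{v}^2}<\infty$ by Condition~\ref{C:Process}. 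Tightness follows from martingale orthogonality: $\E{\scalar{\sqrt{n\phi(h)}V_g(x)}{e_k}^2}=\E{\epsilon_{k,0}^2}(n\phi(h))^{-1}\sum_t\E{\Delta_{h,t}(x)^2}\le C\E{\epsilon_{k,0}^2}$ uniformly in $n$, hence $\sup_n\sum_{k>m}\E{\scalar{\sqrt{n\phi(h)}V_g(x)}{e_k}^2}\le C a_0 e^{-a_1 m}\to0$ by Condition~\ref{C:Process}~\ref{C:Process1}; this is the standard Hilbert-space tightness criterion and also shows $\tilde\cC_x$ is trace class. Hence $\cL(\sqrt{n\phi(h)}V_g(x))\Rightarrow\fG(0,\tilde\cC_x)$.

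\emph{The term $V_g^*$.} Fix $v$ and condition on the data; write $\xi^*_{n,t}=(n\phi(h))^{-1/2}\Delta^*_{h,t}(x)\scalar{\epsilon^*_{t+1}}{v}$ and $\sigma^2_{n,v}=|\cI_n|^{-1}\sum_{j\in\cI_n}\scalar{\epsilon'_j}{v}^2$. The conditional variance of $\sum_t\xi^*_{n,t}$ is $\sigma^2_{n,v}(n\phi(h))^{-1}\sum_t(\Delta^*_{h,t}(x))^2$; the second factor converges to $M_2 f_X(x)$ in probability by Lemma~\ref{BootstrapHatF}(iii), and $\sigma^2_{n,v}\to\E{\scalar{\epsilon_0}{v}^2}$ in probability since convergence of the empirical law of the $\epsilon'_j$ to $\cL(\epsilon_0)$ in the Mallows metric (Theorem~\ref{ConsistencyFARBoot}) entails convergence of the second moments. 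The conditional Lindeberg condition is immediate from the deterministic bound $|\xi^*_{n,t}|\le(n\phi(h))^{-1/2}K(0)\norm{v}\max_{j\in\cI_n}\norm{\epsilon'_j}$ together with $\max_{j\in\cI_n}\norm{\epsilon'_j}\le\max_{1\le j\le n}\norm{\epsilon_j}+\sup_{x\in\cU(r_n)}\norm{\hatPsib(x)-\Psi(x)}+\norm{\bar\epsilon}=\Op(\log n)$ (subexponential tails of $\norm{\epsilon_t}$ from Condition~\ref{C:Process}~\ref{C:Process2}, Corollary~\ref{C:UnifConsistencyHatPsi}, and a law of large numbers for $\bar\epsilon$); since $(n\phi(h))^{1/2}(\log n)^{-(2+\alpha)}\to\infty$ by Condition~\ref{C:Bandwidth}, this bound is $\op((n\phi(h))^{1/2})$, so on a data-event of probability tending to $1$ the conditional Lindeberg indicators are identically zero. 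Tightness follows as before: $\Ec{\scalar{\sqrt{n\phi(h)}V_g^*(x)}{e_k}^2}=\sigma^2_{n,e_k}(n\phi(h))^{-1}\sum_t\Ec{(\Delta^*_{h,t}(x))^2}\le C\sigma^2_{n,e_k}$ with $C=\Op(1)$ by Lemma~\ref{BootstrapHatF}(i), and from $\epsilon'_j=\epsilon_j+(\Psi(X_{j-1})-\hatPsib(X_{j-1}))-\bar\epsilon$ one gets $\sum_{k>m}\sigma^2_{n,e_k}\le 3|\cI_n|^{-1}\sum_{j\in\cI_n}\sum_{k>m}\epsilon_{k,j}^2+\op(1)$, whose leading term converges in probability to $\sum_{k>m}\E{\epsilon_{k,0}^2}\le a_0 e^{-a_1 m}$. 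I would then conclude through a subsequence argument: along any subsequence extract a further one on which, $\p$-a.s., the conditional variances and the $\sigma^2_{n,e_k}$ converge for every $k$ and every $v$ in a fixed countable dense set (diagonalisation), and $\max_{j\in\cI_n}\norm{\epsilon'_j}=o((n\phi(h))^{1/2})$; on such a path the conditional martingale CLT and the tail tightness criterion apply directly, giving $\cL^*(\sqrt{n\phi(h)}V_g^*(x))\Rightarrow\fG(0,\tilde\cC_x)$ along the sub-subsequence, hence in $\p$-probability.

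\emph{Main obstacle.} The genuinely delicate part is the bootstrap assertion, because the martingale CLT has to be run in the conditional (``in probability'') mode, which forces the conditional-variance limit, the conditional Lindeberg bound and the Hilbert-space tail estimate needed for tightness of the \emph{random} conditional laws to be controlled simultaneously in $\p$-probability. This is exactly what the subsequence/diagonalisation device above delivers, but it rests on Theorem~\ref{ConsistencyFARBoot} and Lemma~\ref{BootstrapHatF} to transfer the ergodic behaviour of the original process to the bootstrap process $X^*$.
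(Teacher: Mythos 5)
Your proof is correct, but it takes a genuinely different route from the paper. The paper invokes a ready-made central limit theorem for $\cH$-valued martingale-difference arrays due to Kundu, Majumdar and Mukherjee (2000), reducing the task to verifying its three hypotheses (conditional variance convergence for each direction $v$, a trace-class condition summing over the basis $\{e_k\}$, and a coordinatewise Lindeberg condition, the last handled by a $(2+\delta')$-moment argument together with Lemma~\ref{BootstrapHatF} and Theorem~\ref{ConsistencyFARBoot}). You instead assemble the Hilbert-space CLT from first principles: Cram\'er--Wold plus a one-dimensional martingale CLT for the finite-dimensional distributions, the tail criterion $\sup_n\sum_{k>m}\E{\scalar{\cdot}{e_k}^2}\to 0$ for tightness, and an explicit subsequence/diagonalization argument to convert the conditional CLT into the ``in probability'' mode required for the bootstrap assertion. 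Your Lindeberg handling is also different: for the original process you use dominated convergence on $\E{\scalar{\epsilon_0}{v}^2\1{|\scalar{\epsilon_0}{v}|>c(n\phi(h))^{1/2}}}$, and for the bootstrap you exhibit a data-event of probability tending to one on which the Lindeberg indicators vanish, exploiting the bound $\max_{j\in\cI_n}\|\epsilon'_j\|=\Op(\log n)$ from Condition~\ref{C:Process}~\ref{C:Process2} and Condition~\ref{C:Bandwidth}. The paper's route is shorter because the hard work has been packaged into the cited theorem; your route makes the conditional-variance, Lindeberg and tightness ingredients and the subsequence device explicit, which the paper treats as implicit background. Both are sound and rest on the same lemmas (\ref{BootstrapHatF}, \ref{ConsistencyFARBoot}) to transport the ergodic behaviour of $X$ to $X^*$.
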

\begin{proof}
We prove the claim for the bootstrapped process, the proof of the part which concerns the original process follows similarly. Set $\xi_{n,t} = (n\phi(h))^{-1/2} \Delta^*_{h,t}(x) \epsilon^*_{t+1}$. It suffices to verify in the bootstrap world three conditions from a version of the central limit theorem for $\cH$-valued martingale difference arrays given in \cite{kundu2000central}. These are
	\begin{condition}[Lindeberg condition]\label{C:Lindeberg}\
	\begin{itemize}\setlength\itemsep{0em}

	\item [\mylabel{C:Lindeberg1}{(i)}] $\lim_{n\rightarrow \infty} \sum_{t=1}^n \Ec{ \scalar{\xi_{n,t}}{v}^2 \,\big|\, \cF^*_{t} } = M_2 f_X(x) \E{ \scalar{\epsilon_0}{v}^2} $, for every $v\in\cH$ in probability. 
		
	\item [\mylabel{C:Lindeberg2}{(ii)}] $\lim_{n\rightarrow \infty} \sum_{k\in\N } \sum_{t=1}^n \Ec{ \scalar{\xi_{n,t}}{e_k}^2 } = \sum_{k\in\N} M_2 f_X(x) \E{ \scalar{\epsilon_0}{e_k}^2} = M_2 f_X(x) \E{ \norm{\epsilon_0}^2} < \infty$. 
		
	\item [\mylabel{C:Lindeberg3}{(iii)}]
		$
				\sum_{t=1}^n \Ec{ \scalar{\xi_{n,t}}{e_k}^2 \1{|\! \scalar{\xi_{n,t}}{e_k}| > \rho } \,\big|\, \cF^*_t } = o_p(1),
				$
				for every $\rho>0$ and every $k\ge 1$.
	\end{itemize}
	\end{condition}
We begin with Condition~\ref{C:Lindeberg}~\ref{C:Lindeberg1}.
\begin{align*}
		\sum_{i=1}^n \Ec{ \scalar{\xi_{n,t}}{v}^2 \,\big|\, \cF^*_{t} } &=  \Ec{ \scalar{ \epsilon^*_{0}}{v}^2 } (n \phi(h))^{-1}\sum_{t=1}^n (\Delta^*_{h,t}(x) )^2.
\end{align*}	
This expression converges to $ \mathbb{E}[\scalar{\epsilon_0}{v}^2] M_2 f_X(x)$ in probability by Lemma~\ref{BootstrapHatF}. and by Theorem~\ref{ConsistencyFARBoot}. Similarly, Condition~\ref{C:Lindeberg}~\ref{C:Lindeberg2} is satisfied because
\begin{align*}
		\sum_{k=1}^n \sum_{t=1}^n \Ec{\scalar{\xi_{n,t}}{e_k}^2 } &= \Ec{ \norm{ \epsilon^*_0 }^2 } (n\phi(h))^{-1} \sum_{t=1}^n \Ec{ (\Delta^*_{h,t}(x))^2 } = \Ec{ \norm{ \tilde\epsilon_0 }^2 } M_2 f_X(x) + \op(1),
\end{align*}
where we make again use of Theorem~\ref{ConsistencyFARBoot} and Lemma~\ref{BootstrapHatF}.

We end with Condition~\ref{C:Lindeberg}~\ref{C:Lindeberg3}. Observe that for $a= (2+\delta')/2>1$, $\delta'>0$ and $b$ such that $a^{-1} + b^{-1}=1$
\begin{align*}
			&\sum_{t=1}^n \Ec{ \scalar{\xi_{n,t}}{e_k}^2 \1{|\! \scalar{\xi_{n,t}}{e_k}| > \rho } \!\,\big|\, \cF^*_t }\\
			&\le \rho^{-2a/b} \sum_{t=1}^n \Ec{ \scalar{\xi_{n,t}}{e_k}^{2a}  \,\big|\, \cF^*_t } \\
			&\le \rho^{-2a/b} (n\phi(h))^{-(a-1)} \Ec{\scalar{\epsilon^*_{1}}{e_k}^{2a} } \Big\{ (n\phi(h))^{-1} \sum_{t=1}^n (\Delta^*_{h,t}(x))^{2a} \Big\}.
\end{align*}
Now, $(n\phi(h))^{-1} \sum_{t=1}^n (\Delta^*_{h,t}(x))^{2a} = \cO_p(1)$ by Lemma~\ref{BootstrapHatF} and $ \mathbb{E}^*[\scalar{\epsilon^*_{1}}{e_k}^{2a} ] = \Op(1)$ by Theorem~\ref{ConsistencyFARBoot}. Consequently, Condition~\ref{C:Lindeberg}~\ref{C:Lindeberg3} is also satisfied.
\end{proof}

\begin{lemma}\label{ConvergenceBias}
Let $\bar{B}(x) = L^* \,M_{0} \fx(x)\, \{ \sum_{k\in\N} \intd{\psi}_k(x).\intd{\tilde\varphi_x(0)} e_k \} \in\cH$, where $L^*$ is defined in \ref{C:Bandwidth}. Then
\begin{align*}
		&\lim_{n\rightarrow\infty} \norm{  \sqrt{n \phi(h)}  B_g(x)  - \bar{B}(x) } = 0,\\
		&\lim_{n\rightarrow\infty} \norm{  \sqrt{n \phi(h)}  B^*_g(x)  - \bar{B}(x) } = 0 \text{ in probability.}
\end{align*}
\end{lemma}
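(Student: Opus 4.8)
The plan is to prove both limits with one common scheme: pull $\sqrt{n\phi(h)}$ into the kernel average, linearise the operator increment so that only a first-order kernel average of $X_t-x$ (resp.\ $X^*_t-x$) survives, and split that average into a mean converging to $L^*M_0 f_X(x)\,\intd{\tilde\varphi_x(0)}$ and a vanishing remainder. Write $\Psi'(x)$ for the bounded --- indeed Hilbert--Schmidt, by Condition~\ref{C:Operator}~\ref{C:Operator2} --- linear operator $\cH\to\cH$, $y\mapsto\sum_{k\in\N}\big(\intd{\psi}_k(x).y\big)e_k$, so that $\bar{B}(x)=L^*M_0 f_X(x)\,\Psi'(x).\intd{\tilde\varphi_x(0)}$; I will use throughout that $(n\phi(h))^{1/2}h\to L^*$ and $(n\phi(h))^{1/2}h^{1+\alpha}=o(1)$ (Condition~\ref{C:Bandwidth}) and that $\hat{f}_h(x),\hat{f}_h^*(x)=\Op(1)$ (Lemma~\ref{BootstrapHatF} and its original-process counterpart).

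For the first assertion, $\sqrt{n\phi(h)}\,B_g(x)=(n\phi(h))^{-1/2}\sum_{t=1}^n\Delta_{h,t}(x)\big(\Psi(X_t)-\Psi(x)\big)$. Inserting $\Psi(X_t)-\Psi(x)=\Psi'(x).(X_t-x)+R(X_t,x)$ with $\norm{R(X_t,x)}\le(\sum_k L_k^2)^{1/2}\norm{X_t-x}^{1+\alpha}$ from Condition~\ref{C:Operator}~\ref{C:Operator2}, the remainder contributes at most $(\sum_k L_k^2)^{1/2}h^{1+\alpha}(n\phi(h))^{1/2}\hat{f}_h(x)=\op(1)$ on the support $\{\norm{X_t-x}\le h\}$ of $\Delta_{h,t}(x)$, so after pulling the bounded operator $\Psi'(x)$ out of the sum it suffices to show $(n\phi(h))^{-1/2}\sum_t\Delta_{h,t}(x)(X_t-x)\to L^*M_0 f_X(x)\,\intd{\tilde\varphi_x(0)}$. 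Its expectation equals, by the tower property, the definition of $\tilde\varphi_x$ and stationarity, $(n\phi(h))^{1/2}\phi(h)^{-1}\E{\Delta_{h,t}(x)\,\tilde\varphi_x(\norm{X_t-x})}$, and substituting $\tilde\varphi_x(u)=\intd{\tilde\varphi_x(0)}\,u+\cO(u^{1+\alpha})$ (Condition~\ref{C:Operator}~\ref{C:Operator3}) together with the original-process analogue of Lemma~\ref{BootstrapHatF}(ii) gives the limit $L^*M_0 f_X(x)\,\intd{\tilde\varphi_x(0)}$. The difference between the sum and its expectation splits into an $\cH$-valued $(\cF_t)$-martingale-difference sum of second moment $\le h^2(n\phi(h))^{-1}\sum_t\E{\Delta_{h,t}(x)^2}=\cO(h^2)=o(1)$ and, after extracting the $h$-dependent scalar factor uniformly via the small-ball expansion of Conditions~\ref{C:Kernel} and \ref{C:SmallBallProbability}, the centred empirical average $\big[(n\phi(h))^{1/2}h\big](1+o(1))M_0\,n^{-1}\sum_t\big(g(X_{t-1})-\E{g(X_0)}\big)$ of the fixed bounded $\cH$-valued function $g(z)=\feps(x-\Psi(z))\,\intd{\varphi}_{x-\Psi(z)}(0)$; the latter is $\oas(1)$ by the $\cH$-valued ergodic theorem, and since $\E{g(X_0)}=f_X(x)\,\intd{\tilde\varphi_x(0)}$ by the mixture form of $\tilde\varphi_x$, applying $\Psi'(x)$ yields $\bar{B}(x)$.

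For the bootstrapped assertion I run the same scheme in the bootstrap world, with $\cF^*_{t-1}$, $\Delta^*_{h,t}(x)$, $\epsilon^*_t$, $X^*_t$ in place of $\cF_{t-1}$, $\Delta_{h,t}(x)$, $\epsilon_t$, $X_t$; on the support of $\Delta^*_{h,t}(x)$ the event $\{X^*_t\notin\cU(r_n)\}$ is ultimately empty by the amendment of Lemma~\ref{BootstrapCondProb}, so $\hatPsib(X^*_t)$ is there the kernel estimate. Write $\hatPsib(X^*_t)-\hatPsib(x)=\Psi'(x).(X^*_t-x)+\big[(\hatPsib-\Psi)(X^*_t)-(\hatPsib-\Psi)(x)\big]+R(X^*_t,x)$: the $R$-term contributes $\op(1)$ as above, the middle bracket is the main obstacle treated below, and for the clean part $\Psi'(x).\big[(n\phi(h))^{-1/2}\sum_t\Delta^*_{h,t}(x)(X^*_t-x)\big]$ I would treat the $\cF^*_{t-1}$-conditional mean by the vector analogue of Proposition~\ref{BootstrappedInnovations} --- using \eqref{C:UniformConv3} in place of \eqref{C:UniformConv2} --- which, after extracting the scalar factor $hM_0\phi(h)$ uniformly over $z\in\cU(r_n)$ (using the small-ball expansion and, since $\epsilon'_j=\epsilon_j+A_{n,j}$ with $A_{n,j}$ small, the residual-to-innovation controls of Condition~\ref{C:BetaMixing}), reduces to the ergodic average of $g$ along $\hatPsib(X^*_{t-1})$; this coincides with the ergodic average along $\Psi(\tilde{X}_{t-1})$ up to $\op(1)$ by H\"older continuity of $g$, Lemma~\ref{ErgodicityBootstrap} and \eqref{C:UniformConv1}, hence converges to $f_X(x)\,\intd{\tilde\varphi_x(0)}$, while the $\cF^*_{t-1}$-martingale fluctuation is $\op(1)$ by the conditional second-moment bound and the conditional Markov argument of \eqref{Eq:BootstrapHatF3} together with Lemma~\ref{BootstrapHatF} and Theorem~\ref{ConsistencyFARBoot}. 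Because the couplings of Lemmas~\ref{BootstrapMean}, \ref{ErgodicityBootstrap} and Theorem~\ref{ConsistencyFARBoot} are only $\op(1)$, this second limit holds in probability only.

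The step I expect to be the main obstacle is showing the oscillation term $(n\phi(h))^{-1/2}\sum_t\Delta^*_{h,t}(x)\big[(\hatPsib-\Psi)(X^*_t)-(\hatPsib-\Psi)(x)\big]=\op(1)$, equivalently that the \emph{oscillation} of $\hatPsib-\Psi$ over $U(x,h)$ is $\op\big((n\phi(h))^{-1/2}\big)$. The naive bound by $(n\phi(h))^{1/2}\hat{f}_h^*(x)\sup_{\cU(r_n)}\norm{\hatPsib-\Psi}$ is useless, because the bias of $\hatPsib$ is of order $b$ and $h/b\to0$ (Condition~\ref{C:Bandwidth}), so $(n\phi(h))^{1/2}b\not\to0$; one must instead use that $X^*_t$ and $x$ both lie in $U(x,h)$. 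The leading, $\cO(b)$, component of $\hatPsib-\Psi$ is proportional to $b\,\Psi'(y).\intd{\tilde\varphi_y(0)}$ plus $\cO_p(b^{1+\alpha})$, and $y\mapsto\Psi'(y).\intd{\tilde\varphi_y(0)}$ is H\"older-$\alpha$ by Conditions~\ref{C:Operator}~\ref{C:Operator2} and \ref{C:Operator}~\ref{C:Operator3}, so its oscillation over $U(x,h)$ is $\cO\big(b\,h^{\alpha\wedge1}+b^{1+\alpha}\big)$, which is $o\big((n\phi(h))^{-1/2}\big)$ precisely because $b^{1+\alpha}(n\phi(h))^{1/2}=o(1)$ and $(n\phi(h))^{1/2}h\to L^*$. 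The stochastic part of the oscillation is controlled by passing to the finite $\ell_n$-net $z_{n,1},\dots,z_{n,\kappa_n}$ furnished by the covering condition~\ref{C:Covering} on $\cV_n$ and by the uniform laws \eqref{C:UniformConv1}--\eqref{C:UniformConv3} at bandwidth $b$, with the mesh $\ell_n=o\big(b(n\phi(h))^{-1/2}(\log n)^{-1}\big)$ chosen so that the within-cell residual is negligible after rescaling; carrying this bookkeeping through cleanly, rather than via the supremum bound, is the delicate point of the argument.
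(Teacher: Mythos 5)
Your proposal follows the same decomposition and line of argument as the paper's: linearise $\Psi$ via the coordinate derivatives $\intd{\psi}_k(x)$ and the functions $\tilde\varphi_x$, $\varphi_{x-z}$, isolate the oscillation $(\hatPsib-\Psi)(X^*_t)-(\hatPsib-\Psi)(x)$ and kill it with the covering condition~\ref{C:Covering} via Lemma~\ref{L:UniformConvergenceInNeighborhood}, split $\Psi(X^*_t)-\Psi(x)$ into $\cF^*_{t-1}$-martingale fluctuation plus conditional mean, and reduce that conditional mean to the original-process limit by successively replacing $\epsilon^*_t$ by $\tilde\epsilon_t$ (via \eqref{C:UniformConv3} and Condition~\ref{C:BetaMixing}) and $\hatPsib(X^*_{t-1})$ by $\Psi(\tilde X_{t-1})$ (via Lemma~\ref{ErgodicityBootstrap} and \eqref{C:UniformConv1}). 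Your treatment of the first claim is in fact slightly more explicit than the paper's, which only records the convergence of the expectation in \eqref{BiasConvEq1a}.
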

\begin{proof}
First consider the original process $X$. Using the conditions on the differentiability of the coordinate functions of $\Psi$ at $x$ and the properties of the function $\tilde\varphi_x$ near 0, we obtain
\begin{align}
		&(n\phi(h))^{-1/2} \sum_{t=1}^n \E{ \Delta_{h,t}(x) (\Psi(X_t)-\Psi(x)) } \nonumber \\
		&= (n\phi(h))^{1/2} h\, \E{\frac{\Delta_{h,t}(x)}{\phi(h)} \frac{ \| X_t-x \|}{h}} \left( \sum_{k\in\N} ( \intd{\psi_k}(x). \intd{\tilde\varphi_x}(0) ) . e_k \right) + o(1) \nonumber\\
			&= L^*\, M_0 \fx(x) \left( \sum_{k\in\N} ( \intd{\psi_k}(x). \intd{\tilde\varphi_x}(0) ) . e_k \right) + o(1).\label{BiasConvEq1a} 
\end{align}

Next, we consider the bootstrapped time series $X^*$ and perform the following split
\begin{align}
		&(n \phi(h))^{-1/2} \sum_{t=1}^n \Delta^*_{h,t}(x) \left( \hat\Psi_{n,b}(X^*_t)-\hat\Psi_{n,b}(x)\right)  \nonumber \\
		&=	(n \phi(h))^{-1/2}  \sum_{t=1}^n \Delta^*_{h,t}(x)  \left\{ \hat\Psi_{n,b}(X^*_t)-\hat\Psi_{n,b}(x)  -\Psi(X^*_t) + \Psi(x)\right\} 	 \label{BiasConvEq1} \\ 
			&\quad + (n \phi(h))^{-1/2}  \sum_{t=1}^n \Delta^*_{h,t}(x) \left( \Psi(X^*_t)-\Psi(x)\right)  -  \Ec{ \Delta^*_{h,t}(x) \left(\Psi(X^*_t)-\Psi(x)\right) \big| \cF^*_{t-1} }  \label{BiasConvEq2} \\		
		& \quad + (n \phi(h))^{-1/2}  \sum_{t=1}^n \Ec{ \Delta^*_{h,t}(x) \left(\Psi(X^*_t)-\Psi(x)\right) \big| \cF^*_{t-1} }.  \label{BiasConvEq3}
\end{align}
Then \eqref{BiasConvEq1} is in the $\|\cdot\|$-norm at most
\begin{align}
\begin{split} \label{BiasConvEq4} 
			&(n \phi(h))^{-1/2}  \sum_{t=1}^n \Delta^*_{h,t}(x) \1{ X^*_t\notin \cU(r_n) } \left\{	\norm{ \bar{X}_n } + \|\hat\Psi_{n,b}(x) - \Psi(x) \| + \norm{\Psi(x)} + L_\Psi h  \right\} \end{split} \\
			\begin{split}	\label{BiasConvEq5}
			&\quad+ (n \phi(h))^{-1/2}  \sum_{t=1}^n \Delta^*_{h,t}(x) \1{ X^*_t\in \cU(r_n) } \|\hatPsib(X^*_t)-\hatPsib(x)  -\Psi(X^*_t) + \Psi(x) \|. \end{split}
\end{align}
We study \eqref{BiasConvEq4}, the last factor is $\Op(1)$. Note that $\1{ X^*_t\notin \cU(r_n) , \norm{X^*_t-x}\le h } \le \1{ r_n \le c }$, for some $c\in\R_+$ which is non-random and does neither depend on $n$ nor on $t$ by Lemma~\ref{BootstrapCondProb}. In particular, \eqref{BiasConvEq4} is $o_p(1)$.

It remains \eqref{BiasConvEq5}; using the definition of the random sets $\cV_n$ and Condition~\ref{C:Covering}, it is enough to show that
\begin{align*}
		&(n\phi(h))^{-1/2} \sum_{t=1}^n \Delta^*_{h,t}(x) \times \Big( \sup_{y\in \cV_n\cap \cU(r_n)} \| \hatPsib(y)-\Psi(y) - (\hatPsib(x)-\Psi(x)) \| \1{ \cV_n \text{ satisfies ($\ast$) at $n$} } \Big) = \op(1).
\end{align*}
We infer from Lemma~\ref{L:UniformConvergenceInNeighborhood} that the last factor is $\oas( (n\phi(h))^{-1/2} )$. Moreover, as $(n\phi(h))^{-1} \sum_{t=1}^n \Delta^*_{h,t}(x) = \Op(1)$, consequently, \eqref{BiasConvEq5} is $o_p( (n\phi(h))^{-1/2} )$. 

The conditional expectation of \eqref{BiasConvEq2} is zero. Moreover, 
\begin{align*}
		\Ec{ \Big \| (n \phi(h))^{-1/2}  \sum_{t=1}^n \Delta^*_{h,t}(x) \left(\Psi(X^*_t)-\Psi(x)\right)  -  \Ec{ \Delta^*_{h,t}(x) \left(\Psi(X^*_t)-\Psi(x)\right) \big| \cF^*_{t-1} } \Big\|^2	} = \Op(h^2),
\end{align*}
using the Lipschitz continuity of $\Psi$ and Lemma~\ref{BootstrapHatF}. Arguing as in \eqref{Eq:BootstrapHatF3}, this shows that also \eqref{BiasConvEq2} is $o_p(1)$.

Finally, consider the summand in \eqref{BiasConvEq3} which decomposes as
\begin{align}\begin{split}\label{BiasConvEq6}
			&(n\phi(h))^{-1/2} \sum_{t=1}^n \Ec{ K(h^{-1} \|z+\epsilon^*_t-x\| ) \bigg\{ \sum_{k\in\N} \intd{\psi}_k(x).(z+\epsilon^*_t-x) e_k \bigg\}}\Bigg|_{z=\hatPsib(X^*_{t-1}) }+ \Op( h^{\alpha} ).
\end{split}\end{align}
Next, we show that we can replace the residuals $\epsilon^*_t$ in \eqref{BiasConvEq6} by the $\tilde{\epsilon}_t$. From the computations, which follow \eqref{Eq:BootstrappedInnovations8b}, we have $\{ \hatPsib(X^*_{t-1})\notin\cU(r_n) \} = \emptyset$ if $n$ is larger than a deterministic $n_0\in\N$. Consequently, it is enough to study
\begin{align}
		&\sup_{z\in \cU(r_n) }	\1{\exists y\in\cS: \hat{\Psi}_{n,b}(y) = z}\phi(h)^{-1} \Big\| \mathbb{E}^*\Big[  K(h^{-1} \|z+\epsilon^*_t-x\| )(z+\epsilon^*_t-x) \\
		&\qquad\qquad\qquad-  K(h^{-1} \|z+\tilde\epsilon_t-x\| )(z+\tilde\epsilon_t-x)  \Big] \Big\| \nonumber \\
		\begin{split}\label{BiasConvEq6a}
		&\le\sup_{z\in \cU(r_n) }	\1{\exists y\in\cS: \hat{\Psi}_{n,b}(y) = z} (n\phi(h))^{-1}\Big\| \sum_{t=1}^n  K(h^{-1} \|z+\epsilon'_t-x\| )(z+\epsilon'_t-x) \1{t\in\cI_n}  \\
		&\qquad\qquad\qquad -  \E{ K(h^{-1} \|z+\epsilon'_t-x\| )(z+\epsilon'_t-x) \1{t\in\cI_n} } \Big\| 
		\end{split} \\
		\begin{split}
		&\quad+\sup_{z\in \cU(r_n) }	(n\phi(h))^{-1} \sum_{t=1}^n \Big\| \E{ K(h^{-1} \|z+\epsilon'_t-x\| )(z+\epsilon'_t-x) \1{t\in\cI_n} }  \\
		&\qquad\qquad\qquad - \E{ K(h^{-1} \|z+\epsilon_t-x\| )(z+\epsilon_t-x) \1{t\in\cI_n} }  \Big\|  \label{BiasConvEq6b} \end{split}\\
		\begin{split}
				&\quad+ \sup_{z\in \cU(r_n) }	\phi(h)^{-1} \Big\| \E{ K(h^{-1} \|z+\epsilon_t-x\| )(z+\epsilon_t-x) \1{t\in\cI_n} } \\
		&\qquad\qquad\qquad - \E{ K(h^{-1} \|z+\epsilon_t-x\| )(z+\epsilon_t-x) }  \Big\|.  \label{BiasConvEq6c} \end{split}
\end{align}
\eqref{BiasConvEq6a} is $\op(h)$ by \eqref{C:UniformConv3} resp.\ Proposition~\ref{P:AsConvergenceResiduals}. \eqref{BiasConvEq6c} is $o(h)$. Indeed, it is at most
\begin{align*}
		&h\, \sup_{z\in \cU(r_n) } \phi(h)^{-1} \E{K(h^{-1} \|z+\epsilon_t-x\| ) \, |1-\1{t\in\cI_n}|} \\
		&= h\, \sup_{z\in \cU(r_n) } \phi(h)^{-1} \E{K(h^{-1} \|z+\epsilon_t-x\| )} \p( X_{t-1}\notin \cU(r_n) ) = o(h),
\end{align*}
where the first equality follows from the independence of $\epsilon_t$ and $X_{t-1}$. So \eqref{BiasConvEq6b} remains. Again as $\epsilon'_t=\epsilon_t + A_{n,t}$, each summand in \eqref{BiasConvEq6b} is at most
\begin{align}
\begin{split}\label{BiasConvEq6d}
		& \sup_{z\in \cU(r_n) }	\phi(h)^{-1} \bigg\| \E{ K(h^{-1} \|z+\epsilon'_t-x\| )(z+\epsilon'_t-x) \1{t\in\cI_n} } \\
		&\qquad\qquad\qquad - \E{ \E{  K(h^{-1} \|z+\epsilon_t+a-x\| )(z+\epsilon_t+a-x) }\Big|_{a=A_{n,t}} \1{t\in\cI_n} } \bigg\| \end{split}\\
		\begin{split}\label{BiasConvEq6e}
		&\quad + \sup_{z\in \cU(r_n) }	\phi(h)^{-1} \bigg\| \E{ \E{  K(h^{-1} \|z+\epsilon_t+a-x\| )(z+\epsilon_t+a-x) }\Big|_{a=A_{n,t}} \1{t\in\cI_n} } \\
		&\qquad\qquad\qquad\qquad - \E{ K(h^{-1} \|z+\epsilon_t-x\| )(z+\epsilon_t-x) \1{t\in\cI_n} }  \bigg\|. \end{split}
\end{align}
Using Lemma~\ref{L:BetaMixing}, \eqref{BiasConvEq6d} is $\cO(\phi(h)^{-1/2} \beta(\epsilon_t,(A_{n,t},\1{t\in \cI_n}))^{1/2}\, h )$ which is $o(h)$. Furthermore, using the functions $\varphi_y$ which model the conditional expectation $\E{ \epsilon_t - y | \|\epsilon_t-y\| }$, we can split \eqref{BiasConvEq6e} as
\begin{align}
\begin{split}\label{BiasConvEq6f}
			&\sup_{z\in \cU(r_n) }	\phi(h)^{-1} \Big\| \mathbb{E}\Big[  \mathbb{E}\Big[  K(h^{-1} \|z+\epsilon_t+a-x\| ) \\
			&\qquad\qquad \times (\varphi_{x-z-a}(\|z+\epsilon_t+a-x\|)-\varphi_{x-z}(\|z+\epsilon_t+a-x\|) ) \Big] \Big|_{a=A_{n,t}} \1{t\in\cI_n} \Big] \Big\| \end{split} \\
			\begin{split}\label{BiasConvEq6g}
				&\quad + \sup_{z\in \cU(r_n) }	\phi(h)^{-1} \Big\| \mathbb{E}\Big[  \mathbb{E}\Big[  K(h^{-1} \|z+\epsilon_t+a-x\| ) \varphi_{x-z}(\|z+\epsilon_t+a-x\|)   \\
				&\qquad\qquad - K(h^{-1} \|z+\epsilon_t-x\| )\varphi_{x-z}(\|z+\epsilon_t-x\|) \Big] \Big|_{a=A_{n,t}} \1{t\in\cI_n} \Big] \Big\|. \end{split}
\end{align}
Consider \eqref{BiasConvEq6f}. We have
\begin{align*}
		&\| \varphi_{x-z-a}(\|z+\epsilon_t+a-x\|)-\varphi_{x-z}(\|z+\epsilon_t+a-x\|) \| \\
		&\le \| \intd{\varphi_{x-z-a}(0)} - \intd{\varphi_{x-z}(0)} \|
		\, \|z+\epsilon_t+a-x\| + R_{x-z-a}( \|z+\epsilon_t+a-x\| ) \\
		&\quad + R_{x-z}( \|z+\epsilon_t+a-x\| )\\
		&= \cO( \|a\|^\alpha \wedge C ) + o( \|z+\epsilon_t+a-x\| ),
\end{align*}
for some constant $C\in\R_+$ and where the $o$-expression is uniform in the arguments $x-z-a$ and $x-z$. Consequently, \eqref{BiasConvEq6f} is $\cO( \E{ (\|A_{n,t}\|^\alpha \wedge C ) \1{t\in\cI_n} } h ) + o(h)$ which is $o(h)$.

Next, we come to \eqref{BiasConvEq6g} and consider the inner expectation. Again, we make use of the expansion of $\varphi_x$ near 0. Then the inner expectation is at most
\begin{align}
			&\phi(h)^{-1} \Big\|	\mathbb{E}\Big[  K(h^{-1} \|z+\epsilon_t+a-x\| ) \Big(\intd{\varphi_{x-z}(0)} \|z+\epsilon_t+a-x\| + R_{x-z}(\|z+\epsilon_t+a-x\|) \Big)  \nonumber \\
				&\qquad\qquad - K(h^{-1} \|z+\epsilon_t-x\| ) \Big(\intd{\varphi_{x-z}(0)} \|z+\epsilon_t-x\| + R_{x-z}(\|z+\epsilon_t-x\|) \Big)  \Big] 		\Big\| \nonumber \\
				\begin{split}\label{BiasConvEq6h}
				&\le \| \intd{\varphi_{x-z}(0)}  \| \, \phi(h)^{-1}  \Big|\mathbb{E}\Big[  K(h^{-1} \|z+\epsilon_t+a-x\| ) (h^{-1}\|z+\epsilon_t+a-x\|) \\
					&\qquad - K(h^{-1} \|z+\epsilon_t-x\| ) (h^{-1}\|z+\epsilon_t-x\|) \Big]			\Big| \,h + o(h). \end{split}
\end{align}
By \eqref{SmallBallLipschitz}, $\phi(h)^{-1} |\p(\|z+\epsilon_t+a-x\|\le hs) - \p(\|z+\epsilon_t-x\|\le hs) | \le \tilde{L} \|a\|^\alpha \wedge \tilde{B}$ uniformly if $h$ is sufficiently small. Using this upper bound in \eqref{BiasConvEq6h} and applying this result afterwards in \eqref{BiasConvEq6g}, we obtain that \eqref{BiasConvEq6g} is $\cO( \mathbb{E}[(\|A_{n,t}\|^\alpha \wedge \tilde{B} ) \1{t\in\cI_n} ] ) + o(h) = o(h)$.

Consequently, we can replace the $\epsilon^*_t$ by $\tilde\epsilon_t$ in \eqref{BiasConvEq6} and obtain
\begin{align}
				&(n\phi(h))^{-1/2} \sum_{t=1}^n \Ec{ K(h^{-1} \|z+\tilde\epsilon_t-x\| ) \bigg\{ \sum_{k\in\N} \intd{\psi}_k(x).(z+\tilde\epsilon_t-x) e_k \bigg\}}\Bigg|_{z=\hatPsib(X^*_{t-1}) } + \op(1) \nonumber\\
				&=(n\phi(h))^{-1/2} \sum_{t=1}^n \Ec{ K(h^{-1} \|z+\tilde\epsilon_t-x\| ) \bigg\{ \sum_{k\in\N} \intd{\psi}_k(x).\varphi_{x-z}(\|z+\tilde\epsilon_t-x\|) e_k \bigg\}}\Bigg|_{z=\hatPsib(X^*_{t-1}) } + \op(1) \nonumber \\
				\begin{split}\label{BiasConvEq7}
				&=h (n\phi(h))^{1/2} \; (n\phi(h))^{-1} \sum_{t=1}^n \Ec{ K(h^{-1} \|z+\tilde\epsilon_t-x\| ) (h^{-1}\|z+\tilde\epsilon_t-x\|) } \\
				&\qquad\qquad\qquad \times \bigg\{ \sum_{k\in\N} \intd{\psi}_k(x).\intd{\varphi_{x-z}(0)} e_k \bigg\}\Bigg|_{z=\hatPsib(X^*_{t-1}) }  + \op(1). 
				\end{split}
\end{align}
We can use the continuity properties of $z\mapsto\intd{\varphi_{x-z}}(0)$ and similar arguments as those used to show that \eqref{Eq:BootstrappedInnovations2} to \eqref{Eq:BootstrappedInnovations4} are $\op(1)$ to find that \eqref{BiasConvEq7} equals
\begin{align}\begin{split}\label{BiasConvEq8}
		&h (n\phi(h))^{1/2} \; (n\phi(h))^{-1} \sum_{t=1}^n \Ec{ K(h^{-1} \|z+\tilde\epsilon_t-x\| ) (h^{-1}\|z+\tilde\epsilon_t-x\|) } \\
				&\qquad\qquad\qquad \times \bigg\{ \sum_{k\in\N} \intd{\psi}_k(x).\intd{\varphi_{x-z}(0)} e_k \bigg\}\Bigg|_{z=\Psi(\tilde{X}_{t-1}) }  + \op(1).
				\end{split}\end{align}
Indeed, one can show that the difference between \eqref{BiasConvEq7} and \eqref{BiasConvEq8} is of order
\[
		n^{-1} \sum_{t=1}^n \|\hat\Psi_{n,b}(X^*_{t-1} - \Psi(\tilde{X}_{t-1}) \|^\alpha = \cO\Big( n^{-1}\sum_{t=1}^n \|X^*_{t-1}-\tilde{X}_{t-1}\|^\alpha \Big) + \cO\Big( \sup_{x\in\cU(r_n)} \|\hat\Psi_{n,b}(x)-\Psi(x)\|^\alpha \Big) + \op(1),
		\]
which is $\op(1)$, we skip the details here. However, \eqref{BiasConvEq8} equals
\begin{align*}
			&(n\phi(h))^{-1/2} \sum_{t=1}^n \Ec{ K(h^{-1} \|z+\tilde\epsilon_t-x\| ) \bigg\{ \sum_{k\in\N} \intd{\psi}_k(x).\varphi_{x-z}(\|z+\tilde\epsilon_t-x\|) e_k \bigg\}}\Bigg|_{z=\Psi(\tilde{X}_{t-1}) } + \op(1) \\
			&= (n\phi(h))^{-1/2} \sum_{t=1}^n \Ec{ K(h^{-1} \|z+\tilde\epsilon_t-x\| ) ( \Psi(z+\tilde\epsilon_t)-\Psi(x)) }\Big|_{z=\Psi(\tilde{X}_{t-1}) } + \op(1) \\
			&= (n\phi(h))^{-1/2} \sum_{t=1}^n \Ec{ K(h^{-1} \|\Psi(\tilde{X}_{t-1})+\tilde\epsilon_t-x\| ) ( \Psi(\Psi(\tilde{X}_{t-1})+\tilde\epsilon_t)-\Psi(x)) } + \op(1) \\
			&= (n\phi(h))^{-1/2} \sum_{t=1}^n \Ec{ K(h^{-1} \|\tilde{X}_t-x\| ) ( \Psi(\tilde{X}_t)-\Psi(x)) } + \op(1).
\end{align*}
And this last line is  $L^*\, M_0 \fx(x) ( \sum_{k\in\N} ( \intd{\psi_k}(x). \intd{\tilde\varphi_x}(0) ) . e_k ) + \op(1)$ as in \eqref{BiasConvEq1a}. 
\end{proof}

We can now give a proof of the main theorem
\begin{proof}[Proof of Theorem~\ref{BootstrapDistribution}]
The claim follows when combining Lemma~\ref{BootstrapHatF}, Lemma~\ref{ConvergenceVariance} and Lemma~\ref{ConvergenceBias}. We have both $\cL(\sqrt{n \phi(h) } (\hatPsi(x)- \Psi(x) )) \rightarrow \fG( \bar{B}(x), \cC_x)$ and $\cL^*(\sqrt{n \phi(h)} (\hatPsiB(x)-\hatPsib(x) )) \rightarrow \fG( \bar{B}(x), \cC_x))$ in probability.
As in the proof of Theorem 2.2 in \cite{krebs2018doublefunctional} we conclude with Theorem 3.1 from \cite{rao1962relations} that 
\[
		\lim_{n\rightarrow \infty}  \left | \int_{\cH} F \intd{\mu^*_{x,n}} - \int_{\cH} F \intd{\mu_{x,n}} \right| =0,
\]
where $F\colon\cH\rightarrow\R$ is Lipschitz continuous and bounded. The amendment concerning the one-dimensional projections follows with P{\'o}lya's theorem (see e.g., \cite{serfling2009approximation}) as in the proof of Corollary 2.3 in \cite{krebs2018doublefunctional}.
\end{proof}

\appendix
\section{Appendix}\label{AppendixA}
\begin{lemma}[Uniform convergence properties of $\hat{f}_h$]\label{UnifConvHatF} Let $\cU\in\fU$. Then for all $j \ge 1$
\begin{itemize}\setlength\itemsep{0em}
\item [(i)] $\sup_{x\in\cU} \left| \left\{(n\phi(h))^{-1} \sum_{t=1}^n \E{ \Delta^j_{h,t}(x) \big| \cF_{t-1} }	\right\}	- M_j \fx(x) \right| = \oas(1)$.
\item [(ii)] $\sup_{x\in\cU} \left| \left\{(n\phi(h))^{-1} \sum_{t=1}^n  \E{ h^{-1} \norm{X_t-x} \Delta_{h,t}(x) \big| \cF_{t-1} }\right\} - M_0 \fx(x)		\right| = \oas(1)$.
\item [(iii)] $\sup_{x\in\cU} \left| \phi(h)^{-1} \E{ \Delta^j_{h,t}(x) }		- M_j \fx(x) \right| = o(1)$.
\item [(iv)] $\sup_{x\in\cU} \left| \phi(h)^{-1} \E{ h^{-1} \norm{X_t-x} \Delta_{h,t}(x)  } - M_0 \fx(x)		\right| = o(1)$.
\end{itemize}
\end{lemma}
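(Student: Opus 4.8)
The plan is to reduce all four assertions to the small–ball structure of Condition~\ref{C:SmallBallProbability} via the integration–by–parts identity for kernels supported on $[0,1]$. Fix $j\ge 1$ and $\ell\in\{0,1\}$ — I will use $(j,\ell)=(j,0)$ for (i) and (iii) and $(j,\ell)=(1,1)$ for (ii) and (iv) — and set $g(u)\coloneqq K(u)^j u^{\ell}$ on $[0,1]$, extended by $0$ for $u>1$. By Condition~\ref{C:Kernel} the map $g$ is $C^1$ on $[0,1]$ with $g(1)=K(1)^j$, so with the convention that $K$ vanishes beyond $1$,
\begin{align*}
  \Delta^{j}_{h,t}(x)\,\bigl(h^{-1}\norm{X_t-x}\bigr)^{\ell}=g(1)\,\1{\norm{X_t-x}\le h}-\int_0^1 g'(s)\,\1{\norm{X_t-x}\le hs}\,\intd{s}.
\end{align*}
Taking $\E{\,\cdot\,|\,\cF_{t-1}}$ replaces the two indicators by $F_x^{\cF_{t-1}}(h)$ and $F_x^{\cF_{t-1}}(hs)$, and a further unconditional expectation replaces $F_x^{\cF_{t-1}}$ by $F_x$; moreover $g(1)-\int_0^1 g'(s)\tau_0(s)\,\intd{s}$ equals $M_j$ if $\ell=0$ and $M_0$ if $\ell=1$, the two target constants.

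For (i) and (ii) I would then insert $F_x^{\cF_{t-1}}(hs)=\fxt(x)\phi(hs)+\gxt(hs,x)$, average over $t=1,\dots,n$ and divide by $\phi(h)$. With $B_n(x)=n^{-1}\sum_{t=1}^n\fxt(x)$, $\rho_n(s)=\phi(hs)/\phi(h)$ and $C_n(x,s)=(n\phi(h))^{-1}\sum_{t=1}^n\gxt(hs,x)$, the expression inside the supremum becomes
\begin{align*}
  B_n(x)\Bigl(g(1)-\int_0^1 g'(s)\,\rho_n(s)\,\intd{s}\Bigr)+g(1)\,C_n(x,1)-\int_0^1 g'(s)\,C_n(x,s)\,\intd{s}.
\end{align*}
By Condition~\ref{C:SmallBallProbability}~\ref{C:SmallBallProbability3}, $\rho_n\to\tau_0$ uniformly on $[0,1]$, so (as $g'$ is bounded) the first parenthesis tends to $M_j$ (resp.\ $M_0$); by Condition~\ref{C:SmallBallProbability}~\ref{C:SmallBallProbability2}, $\sup_{x\in\cU}|B_n(x)-\fx(x)|=\oas(1)$ and $\fx$ is bounded on $\cU$ (remarks after Condition~\ref{C:SmallBallProbability}), so the first summand tends to $M_j\fx(x)$ (resp.\ $M_0\fx(x)$) uniformly in $x\in\cU$, $a.s.$ The boundary term is immediate: $\sup_{x\in\cU}|C_n(x,1)|\le\sup_{x\in\cU}n^{-1}\sum_{t=1}^n|\gxt(h,x)|\phi(h)^{-1}=\oas(1)$ by Condition~\ref{C:SmallBallProbability}~\ref{C:SmallBallProbability2}.

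The integral remainder is the delicate piece, and here I would argue $\sup_{x\in\cU}\bigl|\int_0^1 g'(s)\,C_n(x,s)\,\intd{s}\bigr|\le\norm{g'}_\infty\int_0^1\sup_{x\in\cU}|C_n(x,s)|\,\intd{s}$. For each fixed $s\in(0,1]$, rescaling gives $\sup_{x\in\cU}|C_n(x,s)|\le\rho_n(s)\,\sup_{x\in\cU}(n\phi(hs))^{-1}\sum_{t=1}^n|\gxt(hs,x)|=\oas(1)$, applying Condition~\ref{C:SmallBallProbability}~\ref{C:SmallBallProbability2} with the bandwidth $hs$ (understood to hold along any bandwidth sequence tending to $0$); and since $F_x^{\cF_{t-1}}(hs)\le F_x^{\cF_{t-1}}(h)$ for $s\le 1$, the integrand is uniformly dominated, $\sup_{x\in\cU}|C_n(x,s)|\le\sup_{x\in\cU}\{n^{-1}\sum_{t=1}^n F_x^{\cF_{t-1}}(h)\phi(h)^{-1}+B_n(x)\rho_n(s)\}=\Oas(1)$. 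Dominated convergence in $s$ (with a routine Fubini step to fix a common null set, and the usual reduction of $\sup_{x\in\cU}$ to a countable dense set for measurability) then gives that this term is $\oas(1)$ uniformly in $x$, completing (i) and (ii). For (iii) and (iv) the same identity holds with $F_x$ in place of $F_x^{\cF_{t-1}}$, so $\fxt(x)$ becomes $\E{\feps(x-\Psi(X_{t-1}))}=\fx(x)$ by stationarity and $\gxt(hs,x)$ becomes $\E{\geps(hs,x-\Psi(X_{t-1}))}$; no averaging over $t$ is then needed, and it suffices to prove $\sup_{x\in\cU}\E{|\geps(hs,x-\Psi(X_{t-1}))|}=o(\phi(h))$ for $s\in(0,1]$. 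Writing $\cU=\cU(r)$ and fixing a large $\tilde\cU=\cU(\tilde r)\in\fU$, I would split according to whether $X_{t-1}\in\tilde\cU$: on $\{X_{t-1}\in\tilde\cU\}$ the bound $\sup_{y\in\cS}\tiom_{\Psi(y)}<\infty$ (Condition~\ref{C:Operator}~\ref{C:Operator1}), Lipschitz continuity of $\Psi$ and Lemma~\ref{BoundInftyNorm} force $\Psi$ to map $\tilde\cU\cap\cS$ into some $\cU(r')$, so $x-\Psi(X_{t-1})\in\cU(r+r')\in\fU$ and $|\geps(hs,x-\Psi(X_{t-1}))|\le\sup_{z\in\cU(r+r')}|\geps(hs,z)|=o(\phi(h))$; on $\{X_{t-1}\notin\tilde\cU\}$ the $a.s.$ bound $|\geps(h,\cdot)|\le D(z_0)\phi(h)$ together with $\p(X_{t-1}\notin\tilde\cU)\to0$ as $\tilde r\to\infty$ makes the contribution $\le\epsilon\,\phi(h)$; choosing $\tilde r$ large and then $h$ small yields (iii) and (iv).

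The hardest step is the third one: upgrading the bandwidth–$h$ ergodic–averaging control in Condition~\ref{C:SmallBallProbability}~\ref{C:SmallBallProbability2} to uniform–in–$s$ control of $C_n(\cdot,s)$ at the rescaled radii $hs$, $s\in(0,1]$, and transporting the pointwise convergence through the $\intd{s}$–integral by dominated convergence; the uniform domination — itself a consequence of the monotonicity of the conditional small–ball functions in the radius — is what lets dominated convergence absorb the range $s\downarrow 0$. Everything else (the integration by parts, the identification of $M_j$ and $M_0$, and the reduction in (iii)/(iv) to the unconditional small–ball functions) is routine bookkeeping on top of Conditions~\ref{C:SmallBallProbability}~\ref{C:SmallBallProbability2} and~\ref{C:SmallBallProbability}~\ref{C:SmallBallProbability3}.
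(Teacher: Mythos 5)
Your proof is correct and, for parts (i)--(ii), follows essentially the same route as the paper: the integration-by-parts identity for the kernel, the ergodic-averaging assumption (A3)(ii) applied at the rescaled radii $hs$, the ratio convergence (A3)(iii), and dominated convergence in $s$ with the $a.s.$ domination $|\geps(hs,\cdot)|/\phi(hs)\le D(z_0)$ providing the integrable majorant. The only genuine divergence is in (iii)--(iv): the paper dispatches them in one line by taking expectations of (i)--(ii) and applying dominated convergence on $\Omega$ (the same bound $D(z_0)$ does the work), whereas you rerun the identity against the unconditional $F_x$ and prove $\sup_{x\in\cU}\E{|\geps(hs,x-\Psi(X_{t-1}))|}=o(\phi(h))$ directly by splitting on $\{X_{t-1}\in\tilde\cU\}$. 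Both are valid; the paper's shortcut is shorter, while your direct argument makes explicit (using $\sup_{y}\tiom_{\Psi(y)}<\infty$ and the linear structure of $\cup_r\cU(r)$) why the uniform-over-$\fU$ smallness of $\geps$ survives the expectation over the random shift $\Psi(X_{t-1})$ -- a point the paper leaves implicit.
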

\begin{proof}
We fix $j\ge 1$ and an $\ell\in\{0,1\}$. Set $M^* = K(1)^j -\int_0^1 (K(s)^j s^\ell )' \tau_0(s) \intd{s}$. We prove (i) and (ii). (iii) and (iv) follow then from (i) and (ii) by Lebesgue's dominated convergence theorem. First note that
\[
			\phi(h)^{-1} \E{ (h^{-1} \| X_t-x\|)^\ell \Delta^j_{h,t}(x)	| \cF_{t-1} } = K(1)^j \phi(h)^{-1} F^{\cF_{t-1}}_x(h) - \int_0^1 (K^j(s) s^\ell)' \phi(h)^{-1} F^{\cF_{t-1}}_x(hs) \intd{s}.
\]
This implies that
\begin{align*}
			&\sup_{x\in\cU} \left| \left\{(n\phi(h))^{-1} \sum_{t=1}^n  \E{ (h^{-1} \norm{X_t-x})^\ell \Delta^j_{h,t}(x) \big| \cF_{t-1} }\right\} - M^* \fx(x)		\right| \\
			&\le K(1)^j \sup_{x\in\cU} \left\{ \left|n^{-1} \sum_{t=1}^n \fxt(x) - \fx(x) \right| + \left|n^{-1}\sum_{t=1}^n \gxt(h,x) \phi(h)^{-1}  \right|		\right\} \\
			&\quad+ \fx(x)	\left|	\int_0^1 (K^j(s) s^\ell)' \left(\tau_0(s) - \frac{\phi(hs)}{\phi(h)} \right) \intd{s}	\right|	\\
			&\quad+ 	\sup_{x\in\cU} \left|	\int_0^1 (K^j(s) s^\ell)' \left( n^{-1} \sum_{t=1}^n \fxt(x) - \fx(x) \right) \frac{\phi(hs)}{\phi(h)} \intd{s}	\right|	\\
			&\quad+ 		\sup_{x\in\cU} \left|	\int_0^1 (K^j(s) s^\ell)' \left( n^{-1} \sum_{t=1}^n \gxt(hs,x) \phi(hs)^{-1} \right) \frac{\phi(hs)}{\phi(h)} \intd{s}	\right|.
\end{align*}
The first line and the third line on the right-hand side are $\oas(1)$ because both 
\[
	\sup_{x\in\cU} \left|n^{-1} \sum_{t=1}^n \fxt(x) - \fx(x) \right| = \oas(1)\text{ and } \sup_{x\in\cU} \left|n^{-1} \sum_{t=1}^n \gxt(h,x) \phi(h)^{-1}\right| = \oas(1).
	\]
Moreover,	$\sup_{x\in \cU} n^{-1}\sum_{t=1}^n |\gxt(hs,x)| \phi(hs)^{-1} \le D(z_0) <\infty$ for all $s\ge 0$. Thus, the fourth line is $\oas(1)$, with an application of Lebesgue's dominated convergence theorem to the measure $\intd{s}$. Moreover, the second term is $o(1)$ because $|\phi(hs)/\phi(h)-\tau_0(s)| \rightarrow 0$ as $h\downarrow 0$ uniformly in $s\in [0,1]$.
\end{proof}

The first result generalizes the theorem of \cite{laib2011rates} to an $\cH$-valued regression operator.
\begin{theorem}[Uniform consistency of $\hat{\Psi}_{n,h}$]\label{UnifConsistencyHatPsi}
Let $h>0$ be sufficiently small such that Condition~\ref{C:SmallBallProbability} is satisfied. Let $\veps_n = h \lambda_n$ for a positive sequence $(\lambda_n:n\in\N)$ with limit zero. Consider a totally bounded set $\cU\subseteq\cH$ such that $\inf_{x\in\cU} \fx(x) >0$ and assume that
\[
			\sum_{n\in\N} \exp[ -\lambda_n^2 \cO(n \phi(h) (\log n)^{-1} ) ] \log n < \infty \text{ and } \frac{\log N(\cU,\veps_n, \norm{\cdot} ) \log n }{n \phi(h) \lambda_n^2 } \rightarrow 0.
\]
Then $\sup_{x\in\cU} \| \hat{\Psi}_{n,h} (x) - \Psi(x) \| \le a_n + B_n$, where $a_n$ is real-valued, and $B_n$ is a random variable such that $a_n = \cO(h)$ and $B_n = \cO_{a.c.}(\lambda_n)$. In particular, $\sup_{x\in\cU} \| \hat{\Psi}_{n,h} (x) - \Psi(x) \| = \cO_{a.c.}(\lambda_n)$ if $\limsup_{n\rightarrow\infty} h/\lambda_n < \infty$.
\end{theorem}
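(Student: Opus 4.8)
The plan is to follow the scheme of \cite{laib2011rates}, vectorising the final martingale step to accommodate the $\cH$-valued response. On the event, which holds for all $n$ large once $\cU\subseteq\cU(r_n)$ (in particular for the sets $\cU(r)$ relevant to the applications), on which the Nadaraya--Watson formula is active throughout $\cU$, write $\hatPsi(x)-\Psi(x)=B_g(x)/\hat f_h(x)+V_g(x)/\hat f_h(x)$ with $\hat f_h(x)=(n\phi(h))^{-1}\sum_{t=1}^n\Delta_{h,t}(x)$, $B_g(x)=(n\phi(h))^{-1}\sum_{t=1}^n\Delta_{h,t}(x)(\Psi(X_t)-\Psi(x))$ and $V_g(x)=(n\phi(h))^{-1}\sum_{t=1}^n\Delta_{h,t}(x)\epsilon_{t+1}$. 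First I would pin down the denominator: splitting $\hat f_h(x)-M_1\fx(x)$ into the martingale deviation $\hat f_h(x)-(n\phi(h))^{-1}\sum_t\E{\Delta_{h,t}(x)\mid\cF_{t-1}}$ and the remainder $(n\phi(h))^{-1}\sum_t\E{\Delta_{h,t}(x)\mid\cF_{t-1}}-M_1\fx(x)$, the second term is $\oas(1)$ uniformly over $\cU$ by Lemma~\ref{UnifConvHatF}(i), and the first is a scalar instance of the exponential-inequality-plus-covering argument carried out below for $V_g$. Since $M_1>0$ and $\inf_{x\in\cU}\fx(x)>0$, this gives $\inf_{x\in\cU}\hat f_h(x)\ge\tfrac12 M_1\inf_{x\in\cU}\fx(x)>0$ almost completely for $n$ large, reducing everything to uniform bounds on the two numerators.

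The bias numerator is immediate: the support condition \ref{C:Kernel} forces $\|X_t-x\|\le h$ whenever $\Delta_{h,t}(x)\neq 0$, so Lipschitz continuity of $\Psi$ from \ref{C:Operator}~\ref{C:Operator1} yields $\|\Psi(X_t)-\Psi(x)\|\le L_\Psi h$ on that event; hence $\|B_g(x)\|\le L_\Psi h\,\hat f_h(x)$ and $\sup_{x\in\cU}\|B_g(x)/\hat f_h(x)\|\le L_\Psi h$, contributing the deterministic term $a_n:=L_\Psi h=\cO(h)$.

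The heart of the matter is $\sup_{x\in\cU}\|V_g(x)\|=\cO_{a.c.}(\lambda_n)$. Here $\Delta_{h,t}(x)\epsilon_{t+1}$ is an $\cH$-valued martingale difference w.r.t.\ $(\cF_{t+1})$, since $\Delta_{h,t}(x)$ is $\cF_t$-measurable and $\epsilon_{t+1}$ is centred and independent of $\cF_t$. I would (i) expand $\epsilon_{t+1}=\sum_k\epsilon_{k,t+1}e_k$, truncate at $m_n\asymp\log n$ coordinates, and bound the tail contribution in second moment using $\E{\Delta_{h,t}(x)^2}=\cO(\phi(h))$ together with the exponential coordinate decay of \ref{C:Process1}, so that a crude Markov-plus-net bound makes it $o(\lambda_n)$ almost completely; (ii) for each $k\le m_n$ and each fixed $x$, apply a Bernstein/Freedman-type exponential inequality for real martingale differences to $(n\phi(h))^{-1}\sum_t\Delta_{h,t}(x)\epsilon_{k,t+1}$, whose conditional variance is $\cO((n\phi(h))^{-1})$ (bounded kernel, and $\sum_t\1{\|X_t-x\|\le h}=\cO(n\phi(h))$ almost completely from the denominator step, using $K\ge K(1)>0$ on $[0,1]$) and whose Cramér-type moment bound comes from $\E{\|\epsilon_{t+1}\|^m}=\cO(m!\tilde H^{m-2})$ in \ref{C:Process2} via $|\epsilon_{k,t+1}|\le\|\epsilon_{t+1}\|$, obtaining a tail of order $\exp(-c\lambda_n^2 n\phi(h)/\log n)$ (the extra $\log n$ being forced by the truncation inside the Bernstein bound); (iii) pass from an arbitrary $x\in\cU$ to the nearest point of a $\veps_n=h\lambda_n$-net using that $K'$ is bounded, i.e.\ $|\Delta_{h,t}(x)-\Delta_{h,t}(x')|\le\|K'\|_\infty h^{-1}\|x-x'\|$ and is nonzero only when $\|X_t-x'\|\le 2h$, which together with the regularity of $\phi$ from \ref{C:SmallBallProbability3} makes the oscillation $\cO(\lambda_n)$ almost completely; (iv) union-bound over the at most $N(\cU,\veps_n,\|\cdot\|)\,m_n$ net--coordinate pairs and invoke exactly the two hypotheses $\sum_n\exp[-\lambda_n^2\cO(n\phi(h)(\log n)^{-1})]\log n<\infty$ and $\log N(\cU,\veps_n,\|\cdot\|)\log n/(n\phi(h)\lambda_n^2)\to 0$ together with Borel--Cantelli.

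Combining the three pieces, $\sup_{x\in\cU}\|\hatPsi(x)-\Psi(x)\|\le\big(L_\Psi h+\cO_{a.c.}(\lambda_n)\big)\big/\big(\tfrac12 M_1\inf_{x\in\cU}\fx(x)\big)=a_n+B_n$ with $a_n=\cO(h)$ and $B_n=\cO_{a.c.}(\lambda_n)$, and $a_n=\cO(\lambda_n)$ as soon as $\limsup_n h/\lambda_n<\infty$, giving the last assertion. I expect steps (iii)--(iv) to be the main obstacle: calibrating the martingale exponential inequality — in particular the $(\log n)^{-1}$ loss that the truncation inside a Bernstein-type bound typically forces — against the covering-number budget, and controlling the $\cH$-valued tail uniformly over $\cU$; the bias contribution, by contrast, is trivial.
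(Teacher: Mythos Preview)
Your decomposition differs from the paper's: rather than splitting $X_{t+1}=\Psi(X_t)+\epsilon_{t+1}$ up front, the paper centres the full numerator $\hat g_h(x)=(n\phi(h))^{-1}\sum_t X_{t+1}\Delta_{h,t}(x)$ at its conditional expectation given $\cF_{t-1}$ (Collomb's decomposition), producing three numerator terms. Your route gives the cleaner bias bound $\|B_g/\hat f_h\|\le L_\Psi h$; the paper's route, by conditioning one step earlier, yields an exponential inequality (its Lemma~\ref{L:AppliedMDS}) with deterministic constants, and so avoids the extra layer you need in order to feed the random bound $\sum_t\Delta_{h,t}(x)^2=\cO(n\phi(h))$ a.c.\ into a Freedman-type inequality. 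Both strategies work; yours is conceptually simpler, the paper's a bit more self-contained at the martingale step.

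There is, however, a concrete gap in your step (iii). Condition~\ref{C:Kernel} requires $K(1)>0$, so the kernel extended by zero to $[0,\infty)$ has a jump at $1$ and is \emph{not} globally Lipschitz: your bound $|\Delta_{h,t}(x)-\Delta_{h,t}(x')|\le\|K'\|_\infty h^{-1}\|x-x'\|$ fails precisely when exactly one of $\|X_t-x\|,\|X_t-x'\|$ lies in $[0,h]$, where the difference can be as large as $K(0)$. The paper repairs this by adding a boundary indicator $K(0)\,\1{h-\veps_n<\|X_t-z_u\|\le h+\veps_n}$ to the oscillation bound and controlling its contribution via the Lipschitz property of the small-ball distribution in Condition~\ref{C:SmallBallProbability}~\ref{C:SmallBallProbability1} (not \ref{C:SmallBallProbability3}, which is the wrong sub-assumption here), followed by its own exponential-plus-net argument. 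A smaller point on step (i): a ``Markov-plus-net'' bound for the infinite-dimensional tail can fail because $\log N(\cU,\veps_n,\|\cdot\|)$ may vastly exceed $\log n$, so a union over net points is not absorbed by the exponential coordinate decay alone; the paper instead bounds $\Delta_{h,t}(z_u)\le K(0)$ uniformly in $u$ and takes the maximum over net points \emph{inside} the expectation before applying Markov, trading the $\phi(h)$ gain for uniformity in $u$.
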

\begin{proof}[Proof of Theorem~\ref{UnifConsistencyHatPsi}]
Write $\cF_t = \sigma( X_s, 0\le s\le t)$. We begin with a useful lemma
\begin{lemma}\label{L:AppliedMDS}
Let $k\in\N$ and $z\in\R_+$. Then for all $x\in\cU$, $c \in\R_+$, $h>0$ sufficiently small and $n$ there are $A,B\in\R_+$ which do not depend on $n,h$ and $c$ such that
\[
			\p\left( (n\phi(h))^{-1} \left|\sum_{t=1}^n X_{k,t+1} \Delta_{h,t}(x) - \E{X_{k,t+1} \Delta_{h,t}(x)|\cF_{t-1} } \right| \ge c \right) \le 2 \exp\left( -\frac{1}{2}\frac{n\phi(h) c^2}{A+B c} \right).
\]
\end{lemma}
\begin{proof}
The proof relies on an application of Lemma~\ref{ExpIneqMDS}. We compute for $m\in\N$ the moments
\begin{align*}
		\phi(h)^{-m} \E{ |X_{k,t+1}|^m |\Delta_{h,t}(x)|^m |\cF_{t-1} } &\le C \tilde{H}^{m-2} m! \phi(h)^{-(m-1)},
\end{align*}
for some $C,\tilde{H}\in\R_+$. Note that we use $\E{\norm{\epsilon_t}^m}+(L_\Psi h + \norm{\Psi(x)})^m = \cO( \tilde{H}^{m-2} m!)$ and $\phi(h)^{-1} \p( \|X_t-x\|\le h|\cF_{t-1}) \le 2 D(z_0)$ by assumption.
\end{proof}

Next, we introduce the following informal notation, write
\[
			\Et{ \hat{g}_h(x) } = (n\phi(h))^{-1} \sum_{t=1}^n \E{ X_{t+1} \Delta_{h,t}(x) | \cF_{t-1}} \text{ and } \Et{ \hat{f}_h(x) } = (n\phi(h))^{-1} \sum_{t=1}^n \E{ \Delta_{h,t}(x) | \cF_{t-1}}. 
\]
We follow \cite{collomb1977estimation} and consider the difference $\hat{\Psi}_{n,h}(x) - \Psi(x)$ on $\cU$:
\begin{align*}
\hat{\Psi}_{n,h}(x) - \Psi(x)  &= (\hat{f}_h(x))^{-1} \Big\{	\left( \hat{g}_h(x) - \Et{\hat{g}_h(x) } \right) - \Psi(x)\left( \hat{f}_h(x) - \Et{\hat{f}_h(x) } \right) 	\\
&\quad +\left( \Et{ \hat{g}_h(x) } - \Psi(x)\Et{\hat{f}_h(x)} \right) \Big\} .
	\end{align*}
	Thus,
	\begin{align}\begin{split}\label{UnifConsistencyHatPsiEq1}
	\sup_{x\in\cU} \norm{ \hat{\Psi}_{n,h}(x) - \Psi(x) }
&\le \Bigg\{	\sup_{x\in\cU} \norm{ \hat{g}_h(x) - \Et{\hat{g}_h(x) } }  +  \sup_{x\in\cU} \norm{\Psi(x)} \cdot \sup_{x\in\cG}  \left| \hat{f}_h(x) - \Et{\hat{f}_h(x) }  \right|  \\
	&\qquad\qquad + \sup_{x\in\cU}  \norm{ \Et{ \hat{g}_h(x) } - \Psi(x)\Et{\hat{f}_h(x)} } \Bigg\}  \Biggl/ \inf_{x\in\cU} \hat{f}_h(x).
\end{split}\end{align} 
The third term in the numerator is $\Oas(h)$, this follows from the Lipschitz continuity of the regression operator $\Psi$ and the dependence structure of the FAR(1)-process and Lemma~\ref{UnifConvHatF}.

The denominator in \eqref{UnifConsistencyHatPsiEq1} can be bounded as
\begin{align}
		\inf_{x\in\cU} \hat{f}_h(x) &\ge \inf_{x\in\cU} \Et{ \hat{f}_h(x) } - \sup_{x\in \cU} \left| \hat{f}_h(x) - \Et{\hat{f}_h(x)  } \right| \nonumber \\
		&\ge M_1 \inf_{x\in\cU} f_X(x) - \sup_{x\in \cU} \left| \Et{\hat{f}_h(x)}-M_1 f_X(x) \right| - \sup_{x\in\cU} \left| \hat{f}_h(x) - \Et{ \hat{f}_h(x)} \right|.\label{UnifConsistencyHatPsiEq2}
\end{align}
By assumption, the infimum on the right-hand side of \eqref{UnifConsistencyHatPsiEq2} is positive and the first supremum converges to zero $a.s.$ by Lemma~\ref{UnifConvHatF}. Thus, it is sufficient to consider in the rest of the proof the first two terms in the numerator of \eqref{UnifConsistencyHatPsiEq1}
\[
	\sup_{x\in\cU} \norm{ \hat{g}_h(x) - \Et{\hat{g}_h(x) } } 
	\text{ and }
	 \sup_{x\in\cU} \norm{\Psi(x)}  \cdot \sup_{x\in\cU}  \left| \hat{f}_h(x) - \Et{\hat{f}_h(x) }  \right|
\]
and show that these are $\cO_{a.c.}(\lambda_n)$. We only focus on the first supremum, the second is easier to handle and follows similarly. Consider a covering of $\cU$ with $k_n = N(\cU,\veps_n,\| \cdot \|)$ balls of diameter $\veps_n$ centered at points $z_1,\ldots,z_{k_n}$. Then the first supremum is bounded above by
\begin{align}
			&\sup_{x\in\cU}\, (n\phi(h))^{-1} \norm{ \sum_{t=1}^n X_{t+1} \Delta_{h,t}(x) - \E{  X_{t+1} \Delta_{h,t}(x) | \cF_{t-1} } } \nonumber \\
			&\le \max_{1\le u\le k_n } (n\phi(h))^{-1} \norm{ \sum_{t=1}^n X_{t+1} \Delta_{h,t}( z_u ) - \E{  X_{t+1} \Delta_{h,t}(z_u ) | \cF_{t-1} } } \label{UnifConsistencyHatPsiEq3} \\
			\begin{split}\label{UnifConsistencyHatPsiEq4}
			&\quad+ \max_{1\le u\le k_n } \sup_{x\in U(z_u,\veps_n) \cap \cU }  (n\phi(h))^{-1} \Bigg\| \sum_{t=1}^n X_{t+1} (\Delta_{h,t}(x)- \Delta_{h,t}(z_u) ) \\
			&\qquad\qquad\qquad- \E{  X_{t+1} (\Delta_{h,t}(x) -  \Delta_{h,t}(z_u )) | \cF_{t-1} }   \Bigg\|,
			\end{split}
\end{align}
where $U(z_u,\veps_n)$ is the closed $\veps_n$-neighborhood of $z_u$ in $\cH$.
We choose $m$ proportional to $\log n$ (an exact constant can be derived in the subsequent lines). First consider the tail probability of \eqref{UnifConsistencyHatPsiEq3}
\begin{align}
		&\p\left( \max_{1\le u\le k_n } (n\phi(h))^{-1} \norm{ \sum_{t=1}^n X_{t+1} \Delta_{h,t}( z_u ) - \E{  X_{t+1} \Delta_{h,t}(z_u ) | \cF_{t-1} } } \ge c \right) \nonumber\\
		&\le \p\left( \max_{1\le u\le k_n } (n\phi(h))^{-2} \sum_{k\le m} \Big(\sum_{t=1}^n X_{k,t+1} \Delta_{h,t}( z_u ) - \E{  X_{k,t+1} \Delta_{h,t}(z_u ) | \cF_{t-1} } \Big)^2 \ge c^2 \right) \label{UnifConsistencyHatPsiEq5} \\
		&\quad+  (n\phi(h) c)^{-2}\, \E{ \max_{1\le u\le k_n } \sum_{k > m}  \Big(\sum_{t=1}^n X_{k,t+1} \Delta_{h,t}( z_u ) - \E{  X_{k,t+1} \Delta_{h,t}(z_u ) | \cF_{t-1} } \Big)^2 }. \label{UnifConsistencyHatPsiEq6}
\end{align}
Set $c$ proportional to $\lambda_n$, the exact constant can be derived in the following lines. Then by Lemma~\ref{L:AppliedMDS} there are $A,B\in\R_+$ (which also depend on the constant that governs the growth of $m$ but not on $n$) such that the probability in \eqref{UnifConsistencyHatPsiEq5} is
\begin{align}\begin{split}\label{UnifConsistencyHatPsiEq7}
			&\cO\left\{ k_n  \exp\left( - \frac{ n \phi(h) \lambda_n^2 (\log n)^{-1} }{A + B \lambda_n (\log n)^{-1/2}} \right)  \log n \right\} \\
			&= \cO\left\{ \exp\left[ -\frac{n \phi(h) \lambda_n^2}{ \log n } \left(\frac{1}{A + B \lambda_n (\log n)^{-1/2} } - \frac{\log N(\cU,\veps_n,\| \cdot \|) \log n }{n \phi(h) \lambda_n^2 } \right) \right] \log n\right\}. 
\end{split}\end{align}
By assumption, the last upper bound in \eqref{UnifConsistencyHatPsiEq7} is summable over $n\in \N$ which implies
\[
			\max_{1\le u\le k_n } (n\phi(h))^{-2} \sum_{k\le m} \Big(\sum_{t=1}^n X_{k,t+1} \Delta_{h,t}( z_u ) - \E{  X_{k,t+1} \Delta_{h,t}(z_u ) | \cF_{t-1} } \Big)^2 = \cO_{a.c.}\left(\lambda_n^2 \right).
\]

Moreover, one finds that \eqref{UnifConsistencyHatPsiEq6} is $\cO( (\phi(h)c)^{-2}  \sum_{k>m}\E{ |\epsilon_{k,t}|^2  + |\Psi(X_{k,t-1})|^2 } )$, where we bound $\Delta_{h,t}(z_u) \le K(0)$ uniformly in $u$. Next, use that $\sum_{k>m} \E{ |\epsilon_{k,t}|^2  + |\Psi(X_{k,t-1})|^2 } = \sum_{k>m} \E{ | X_{k,t}|^2 } \le a_0\exp(-a_1 m)$. Consequently, as $c$ is proportional to $\lambda_n$, \eqref{UnifConsistencyHatPsiEq6} is $\cO( (\phi(h) c)^{-2} \exp( - a_1 m) ) = \cO( (\phi(h) \lambda_n)^{-2} n^{-a_3} )$, where $a_3$ can be arbitrarily large. Then obviously,
\[
		\max_{1\le u\le k_n } (n\phi(h))^{-2} \sum_{k > m} \Big(\sum_{t=1}^n X_{k,t+1} \Delta_{h,t}( z_u ) - \E{  X_{k,t+1} \Delta_{h,t}(z_u ) | \cF_{t-1} } \Big)^2 = \cO_{a.c.}(\lambda_n^2).
\]
Consequently, \eqref{UnifConsistencyHatPsiEq3} is $\cO_{a.c.}(\lambda_n)$.

Second we demonstrate that also \eqref{UnifConsistencyHatPsiEq4} attains the rate $\cO(\lambda_n)$. It is at most
\begin{align}\begin{split}\label{UnifConsistencyHatPsiEq8}
		  &\max_{1\le u\le k_n } \sup_{x\in U(z_u,\veps_n) }  (n\phi(h))^{-1} \sum_{t=1}^n \norm{X_{t+1}} |\Delta_{h,t}(x)- \Delta_{h,t}(z_u) | \\
			&\quad + \max_{1\le u\le k_n } \sup_{x\in U(z_u,\veps_n) }  (n\phi(h))^{-1} \sum_{t=1}^n  \E{ \norm{ X_{t+1}} |\Delta_{h,t}(x) -  \Delta_{h,t}(z_u )| | \cF_{t-1} }. 
			\end{split}
\end{align}
We only show that the first summand in \eqref{UnifConsistencyHatPsiEq8} attains the desired rate, the second works similarly. Set
\[
			Z_t(y) =	(n\phi(h))^{-1} \norm{X_{t+1}} \left[ \frac{\veps_n}{h} \norm{K'}_\infty \1{\| X_t-y\|\le h} + K(0) \1{ h-\veps_n < \|X_t-y\|\le h+\veps_n } \right].
\]
Here the first term in the square bracket is relevant if both $\|X_t-x\|\le h$ and $\|X_t-z_u\|\le h$. The second term is relevant if either $\|X_t-x\| \le h$ and $\|X_t-z_u\|>h$ or $\|X_t-x\| > h$ and $\|X_t-z_u\| \le h$. 

Then the first summand in \eqref{UnifConsistencyHatPsiEq8} is bounded above by $\max_{1\le u\le k_n } \sum_{t=1}^n  Z_t(z_u)$ which is $\cO_{a.c.}( \veps_n h^{-1} ) = \cO_{a.c.}(\lambda_n)$. Indeed, consider in the first place the sum of conditional expectations
\begin{align}\begin{split}\label{UnifConsistencyHatPsiEq9}
			\max_{1\le u\le k_n} \sum_{t=1}^n  \E{ Z_t(z_u) | \cF_{t-1} } &= (n\phi(h))^{-1} \max_{1\le u\le k_n} \sum_{t=1}^n \mathbb{E}\Big[ \E{ \norm{X_{t+1}} | X_t } \Big\{ \frac{\veps_n}{h} \norm{K'}_\infty \1{ \|X_t-z_u\|\le h} \\
			&\quad + K(0) \1{ h-\veps_n < \|X_t-z_u\|\le h+\veps_n } \Big\} | \cF_{t-1} \Big].
\end{split}\end{align}
Use that $ \E{ \norm{X_{t+1}} | X_t } $ is bounded on the set $\{z: \|X_t-z\|\le h, z\in \cU \}$ and that 
\[
		\phi(h)^{-1}\p(\|\epsilon_t-x\|\le h) \le \phi(h)^{-1} \p(\|\epsilon_t-z_0\|\le h)
		\]
for all $x\in\cH$ if $h$ is sufficiently small.	Moreover,
\[
	\sup_{y\in\cU} \left| (n\phi(h))^{-1} \sum_{t=1}^n \p( \|X_t-y\|\le h + \veps_n  | \cF_{t-1} ) - \p( \|X_t-y\| \le h - \veps_n  | \cF_{t-1} )  \right| = \cO( \veps_n h^{-1}) = \cO(\lambda_n)
	\]
 because of the Lipschitz continuity of the conditional distribution functions from Condition~\ref{C:SmallBallProbability}~\ref{C:SmallBallProbability1}. This yields that \eqref{UnifConsistencyHatPsiEq9} is at most $c \lambda_n$ for a constant $c\in\R_+$, i.e., \eqref{UnifConsistencyHatPsiEq9} admits a deterministic upper bound. In particular, \eqref{UnifConsistencyHatPsiEq9} is $\cO_{a.c.}(\lambda_n)$. In addition, one can show that
\[
			\p\left(	\max_{1\le u\le k_n}  \left| \sum_{t=1}^n Z_t(z_u) -  \E{ Z_t(z_u) | \cF_{t-1} } \right| \ge c	\right) \le 2 k_n \exp\left( - \frac{1}{2} \frac{n \phi(h) c^2}{A \veps_n h^{-1} + B c} \right),
\]
for certain $A,B\in\R_+$. In particular, as $\veps_n h^{-1} = \lambda_n \rightarrow 0$, we obtain for the choice $c$ (which is proportional to $\lambda_n$) that this last exponential bound is dominated by \eqref{UnifConsistencyHatPsiEq7}. Thus, 
$
			\max_{1\le u\le k_n}  \left| \sum_{t=1}^n Z_t(z_u) -  \E{ Z_t(z_u) | \cF_{t-1} } \right| = \cO_{a.c.}( \lambda_n )
$. 
So the first summand in \eqref{UnifConsistencyHatPsiEq8} is also $\cO_{a.c.}(\lambda_n)$; the same applies to the second summand in \eqref{UnifConsistencyHatPsiEq8}.
\end{proof}

\begin{corollary}\label{C:UnifConsistencyHatPsi}
Let $d=1$ and let $(b_n:n\in\N)$ converge to 0. Then there is a sequence $(r_n:n\in\N)\subseteq\R_+$ with $\lim_{n\rightarrow\infty}r_n=\infty$ such that $\sup_{x\in\cU(r_n)} \|\hat\Psi_{n,b}(x)-\Psi(x)\| = \op(1)$.
\end{corollary}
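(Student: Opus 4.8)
The plan is to deduce the corollary from the uniform consistency Theorem~\ref{UnifConsistencyHatPsi}, applied on the fixed totally bounded sets $\cU(r)\cap\cS$ ($r\in\N$) with bandwidth $b_n$ in the role of $h$, and then to let $r=r_n\to\infty$ slowly by a diagonal argument. Throughout, $\hatPsib$ is read as the kernel ratio $x\mapsto\big(\sum_{t=1}^n\Delta_{b,t}(x)\big)^{-1}\sum_{t=1}^n X_{t+1}\Delta_{b,t}(x)$, which is defined on all of $\cH$, does not involve the sequence $(r_n)$, and is the object studied in Theorem~\ref{UnifConsistencyHatPsi}; once $(r_n)$ is fixed at the end, this coincides on $\cS\cap\cU(r_n)$ with the estimator from Section~\ref{Section_NotationsDefinitions}, so no circularity arises.

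Fix $r\in\N$ and set $\cU=\cU(r)\cap\cS$. This set is totally bounded, as a subset of $\cU(r)$, and $\inf_{x\in\cU}\fx(x)>0$ $a.s.$ by Condition~\ref{C:SmallBallProbability}. For $d=1$ the covering-number estimate of Section~\ref{Section_NotationsDefinitions}, applied to $\cU(r)\supseteq\cU$, gives $\log N(\veps,\cU,\norm{\cdot})\le C_0\,r\,\veps^{-1}$ for a constant $C_0$ depending only on $\cD$ and $\nu$. Hence, with $\veps_n=b_n\lambda_n$, the two hypotheses of Theorem~\ref{UnifConsistencyHatPsi} reduce to
\[
	\sum_{n}\exp\!\big(-c\,\lambda_n^{2}\,n\phi(b_n)(\log n)^{-1}\big)\log n<\infty
	\qquad\text{and}\qquad
	\frac{C_0\,r\,\log n}{n\phi(b_n)\,b_n\,\lambda_n^{3}}\longrightarrow 0 ,
\]
$c>0$ being the implied constant. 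Under the bandwidth conditions~\ref{C:Bandwidth} one has $n\phi(b_n)(\log n)^{-2}\to\infty$ and $n\phi(b_n)b_n(\log n)^{-1}\to\infty$, so choosing $\lambda_n=\lambda_n^{(r)}\to 0$ of strictly larger order than $\max\{((\log n)^{2}/(n\phi(b_n)))^{1/2},\,(r\log n/(n\phi(b_n)b_n))^{1/3}\}$ (which exists since that maximum tends to $0$) makes both conditions hold. Theorem~\ref{UnifConsistencyHatPsi} then yields $\sup_{x\in\cU}\|\hatPsib(x)-\Psi(x)\|\le a_n^{(r)}+B_n^{(r)}$ with $a_n^{(r)}=\cO(b_n)$ and $B_n^{(r)}=\cO_{a.c.}(\lambda_n^{(r)})$; since $\lambda_n^{(r)}\to0$, the supremum tends to $0$ $a.s.$, in particular it is $\op(1)$, for every fixed $r$.

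For the diagonalisation, choose for each $m\in\N$ an index $N_m$ with $\p(\sup_{x\in\cU(m)\cap\cS}\|\hatPsib(x)-\Psi(x)\|>1/m)<1/m$ for all $n\ge N_m$, arranged so that $N_1<N_2<\cdots$. Define $r_n=m$ for $N_m\le n<N_{m+1}$ and $r_n=1$ for $n<N_1$; then $r_n\uparrow\infty$, and since $r\mapsto\cU(r)$ is increasing, for $n\ge N_m$ with $r_n=m$ we get $\p(\sup_{x\in\cU(r_n)\cap\cS}\|\hatPsib(x)-\Psi(x)\|>1/r_n)<1/r_n\to0$, i.e.\ $\sup_{x\in\cU(r_n)}\|\hatPsib(x)-\Psi(x)\|=\op(1)$, which is the assertion. (Measurability of the suprema is the usual one, handled via separability of $\cS$.)

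The only genuine work is the rate check in the middle step: one must ensure that for each fixed $r$ the covering number of $\cU(r)$ grows slowly enough relative to $n\phi(b_n)$ for an admissible $\lambda_n\to0$ to exist. This is precisely where the hypothesis $d=1$ enters — for $d\ge2$ the covering-number bound is $\log N(\veps,\cU(r),\norm{\cdot})\asymp(r/\veps)^{d}$ and the second display would instead require $n\phi(b_n)\,b_n^{d}(\log n)^{-1}\to\infty$, which~\ref{C:Bandwidth} does not provide. The remaining steps—the invocation of Theorem~\ref{UnifConsistencyHatPsi}, the passage from $a.s.$ convergence to $\op(1)$, and the diagonal choice of $(r_n)$—are routine.
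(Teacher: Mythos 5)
Your proof is correct and follows the same two-step route as the paper: first apply Theorem~\ref{UnifConsistencyHatPsi} on each fixed $\cU(r)$ with bandwidth $b$, verifying the summability and covering-number conditions from the $d=1$ covering bound and Condition~\ref{C:Bandwidth}, then diagonalize over $r$ to obtain a slowly growing $r_n$. The only cosmetic difference is that the paper fixes $\veps_n$ explicitly via $\veps_n^3=(\log n)^2b^2(n\phi(b))^{-1}$ and checks both conditions for that choice, whereas you phrase it as choosing $\lambda_n\to 0$ dominating the maximum of the two critical rates; these are equivalent and your rate verification (in particular $n\phi(b)(\log n)^{-2}\to\infty$ and $n\phi(b)\,b\,(\log n)^{-1}\to\infty$ from \ref{C:Bandwidth} together with $h(n\phi(h))^{1/2}\to L^*$ and $\phi(b)\ge\phi(h)$) matches the paper's.
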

\begin{proof}
In the first step, we show that the estimator is indeed uniformly consistent on a set of the type $\cU(r)$ for a given sequence $b_n$ which satisfies Condiiton~\ref{C:Bandwidth} as claimed in Theorem~\ref{UnifConsistencyHatPsi}; note that in the present situation, we need to replace the bandwidth $h$ by the bandwidth $b$. Depending on $b$ choose $\veps_n$ such that	$\veps_n^3 = (\log n)^2 b^2	(n\phi(b))^{-1}$. Then
\[
			\frac{\veps_n}{b} = \Big(\frac{(\log n)^2}{b (n\phi(b)) }\Big)^{1/3} = \Big(\frac{(\log n)^2}{(n\phi(b))^{1/2} }\Big)^{1/3} \Big(\frac{1}{b (n\phi(b))^{1/2} }\Big)^{1/3}  \rightarrow 0.
\]		
Moreover, using the fact that $d=1$, $\lambda_n = \veps_n b^{-1}$ and the definition of $\veps_n$, we have
\[
	\frac{\log N(\cU(r),\veps_n,\|\cdot\|) \log n}{n \phi(b) \lambda_n^2 } = \cO\Big( \frac{r \log n b^2}{n \phi(b) \veps_n^3} \Big) = o(1).
\]
And additionally,
\[
		\sum_{n\in\N} \exp\big(	-\lambda_n^2\, \cO(n \phi(b) (\log n)^{-1})	\big) \log n = \sum_{n\in\N} \exp\Big[- \log n\, \cO\Big\{ \Big(\frac{n \phi(b)}{b^2 (\log n)^2}	\Big)^{1/3}	\Big\} \Big] \log n < \infty
\]
because $n\phi(b) (\log n)^{-2} \rightarrow \infty$. Hence, Theorem~\ref{UnifConsistencyHatPsi} holds.

In the second step, let $S\in\R_+$ be arbitrary but fix. Set $a_k = S k$ for $k\in\N_+$. Moreover, let $(f_k:k\in\N)$ and $(g_k:k\in\N)$ be positive sequences converging to 0. Set $m_0 =0$. Then define inductively for $k\in\N_+$
\[
		m_k = \inf \Big\{u\in\N,\;	u > m_{k-1}: \p( \sup_{x\in\cU(a_k)} \| \hat\Psi_{\ell,b_\ell}(x)-\Psi(x)\| > g_k )\le f_k	\quad \forall \ell\ge u	\Big\}.
\]
The definition of $m_k$ is meaningful for each $k$ (given $m_{k-1}$) because of Theorem~\ref{UnifConsistencyHatPsi} and because $g_k$ and $f_k$ are fix for each $k$. Next construct the sequence $(r_n:n\in\N)$ as follows: $r_1=\ldots=r_{m_1-1}=S /2$ and $r_{m_k} = \ldots = r_{m_{k+1}-1} = S k$ ($k\in\N_+$). Note that $\lim_{n\rightarrow \infty} r_n = \infty$ because $(m_k:k\in\N)$ partitions $\N$.

Let now $\epsilon,\delta>0$ be arbitrary. Choose $k_1$ such that $\delta > g_{n}$ and $\epsilon> f_{n}$ for all $n \ge k_1$. Moreover, for each $n$ there is a unique integer $k_2$ such that $n\in \{m_{k_2},\ldots,m_{k_2+1}-1\}$. Choose $n$ large enough such that $k_2 \ge k_1$. Then
\begin{align*}
			\p( \sup_{x\in\cU(r_n)} \| \hat\Psi_{n,b_n}(x)-\Psi(x)\| > \delta ) &\le \p( \sup_{x\in\cU(r_n)} \| \hat\Psi_{n,b_n}(x)-\Psi(x)\| > g_{k_1} ) \\
			&\le \p( \sup_{x\in\cU(S k_2 )} \| \hat\Psi_{n,b_n}(x)-\Psi(x)\| > g_{k_2} ) \le f_{k_2} \le f_{k_1} \le \epsilon. 
\end{align*}
\end{proof}

\begin{proposition}\label{P:AsConvergenceResiduals}
Let $d=1$ and $h=h_n\rightarrow 0$ as in Condition~\ref{C:Bandwidth}. Then for every $r\in\R_+$ 
\begin{align}
\begin{split}\label{E:PConvergenceResiduals1}
			&\sup_{z\in\cU(r)} \sup_{s\in [0,1]} (|\cI_n|\, \phi(h))^{-1} \Big| \sum_{t=1}^n \1{\|z+\epsilon'_t-x\|\le hs}\1{t\in\cI_n} \\
		&\qquad\qquad\qquad\qquad\qquad \qquad\qquad 	-\p(\|z+\epsilon'_t-x\|\le hs, t\in\cI_n) \Big| = \op(1),
				\end{split}\\
				\begin{split}\label{E:PConvergenceResiduals2}
&\sup_{z\in\cU(r)} \1{\exists y\in\cS: \hat{\Psi}_{n,b}(y) = z} (|\cI_n|\,\phi(h))^{-1} \Big\| \sum_{t=1}^n K(h^{-1} \|z+\epsilon'_t-x\| ) (h^{-1} (z+\epsilon'_t-x) ) \1{t\in\cI_n} \\
		&\qquad\qquad\qquad\qquad\qquad - \E{K(h^{-1} \|z+\epsilon'_t-x\| ) (h^{-1} (z+\epsilon'_t-x) ) \1{t\in\cI_n} }  \Big\| = \op(1).
		\end{split}
\end{align}
In particular, there is a sequence $(r'_n:n\in\N)\subseteq\R_+$ with $\lim_{n\rightarrow\infty}r'_n=\infty$ such that \eqref{E:PConvergenceResiduals1} and \eqref{E:PConvergenceResiduals2} are $\op(1)$, when $r$ is replaced by $(r_n:n\in\N)$.
\end{proposition}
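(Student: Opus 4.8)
The plan is to read both displays as uniform laws of large numbers, at resolution $\phi(h)$, for functionals of the centred residuals $\epsilon'_t$ over $z\in\cU(r)$ and $s\in[0,1]$, to prove \eqref{E:PConvergenceResiduals1} and \eqref{E:PConvergenceResiduals2} by a common covering‑plus‑concentration scheme, and to deduce the $(r'_n)$‑version by the diagonal construction already used in Corollary~\ref{C:UnifConsistencyHatPsi}. Throughout I would write $\epsilon'_t=\epsilon_t+A_{n,t}$ with $A_{n,t}=\Psi(X_{t-1})-\hatPsib(X_{t-1})-\bar\epsilon$ and $\1{t\in\cI_n}=\1{X_{t-1}\in\cU(r_n)}$, so that $\1{\|z+\epsilon'_t-x\|\le hs}\1{t\in\cI_n}=\1{\|\epsilon_t-(x-z-A_{n,t})\|\le hs}\1{X_{t-1}\in\cU(r_n)}$; the centre of the small ball carries the estimation error $A_{n,t}$, and I keep it there (rather than perturbing the radius), because by Corollary~\ref{C:UnifConsistencyHatPsi} and the strong law of large numbers in $\cH$ one has $\sup_{t\in\cI_n}\|A_{n,t}\|\le\sup_{x\in\cU(r_n)}\|\hatPsib(x)-\Psi(x)\|+\|\bar\epsilon\|=\op(1)$, while \eqref{SmallBallLipschitz} controls the small‑ball probability precisely under centre perturbations.

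First I would reduce to a finite index set. Cover $\cU(r)$ by $k_n$ balls of radius $\veps_n$ centred at $z_1,\dots,z_{k_n}$ (for $d=1$ the bound of Section~\ref{Section_NotationsDefinitions} gives $\log k_n=\cO(r\veps_n^{-1})$) and grid $[0,1]$ with mesh $\eta_n$. The within‑cell oscillation in $s$ is handled by monotonicity of $s\mapsto\sum_t\1{\|z+\epsilon'_t-x\|\le hs}\1{t\in\cI_n}$ together with the modulus bound of Condition~\ref{C:SmallBallProbability}~\ref{C:SmallBallProbability1} (radius perturbation $h\eta_n$), the oscillation in $z$ by \eqref{SmallBallLipschitz} (centre perturbation $\veps_n$), and $s$ near $0$ by $\sup_{s\le\eta}\phi(hs)/\phi(h)\to\sup_{s\le\eta}\tau_0(s)\to0$ from Condition~\ref{C:SmallBallProbability}~\ref{C:SmallBallProbability3} with $\tau_0(0)=0$. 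Since Condition~\ref{C:Bandwidth} forces $h\log n\to0$, one may take $\veps_n,\eta_n\to0$ slowly enough that $\veps_n/h\to0$ and $\log k_n=o\big(n\phi(h)(\log n)^{-1}\big)$, so the discretisation error is $\op(1)$; since $r_n\to\infty$ gives $|\cI_n|/n\to1$, it remains to bound $\max_{u,j}(n\phi(h))^{-1}\big|\sum_{t=1}^n\big(\1{\|z_u+\epsilon'_t-x\|\le hs_j}\1{t\in\cI_n}-\p(\|z_u+\epsilon'_t-x\|\le hs_j,\,t\in\cI_n)\big)\big|$.

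On the grid I would split the centred sum as $\sum_t\big(\vartheta_t-\E{\vartheta_t\mid\cF_{t-1}}\big)+\sum_t\big(\E{\vartheta_t\mid\cF_{t-1}}-\E{\vartheta_t}\big)$, where $\vartheta_t=\1{\|\epsilon_t-(x-z_u-A_{n,t})\|\le hs_j}\1{X_{t-1}\in\cU(r_n)}$. For the first, martingale‑type piece I would apply, over the grid, the exponential inequality for martingale difference arrays used in the proof of Theorem~\ref{UnifConsistencyHatPsi}, the conditional‑variance control coming from $\phi(h)^{-1}\E{\vartheta_t\mid\cF_{t-1}}\le D(z_0)$; for the second I would invoke Condition~\ref{C:BetaMixing}, whose estimates \eqref{C:DiffConditionalDistribution1} replace the $A_{n,t}$‑conditional small‑ball probabilities by unconditional ones at total cost $n^{-1}\sum_t L_{n,t}+\phi(h)^{-1}\beta(\epsilon_t,(A_{n,t},\1{t\in\cI_n}))=o(1)$, after which Lemma~\ref{UnifConvHatF} closes it. The vector display \eqref{E:PConvergenceResiduals2} is treated coordinatewise, splitting the basis series at $m\asymp\log n$ with Condition~\ref{C:Process} exactly as in \eqref{UnifConsistencyHatPsiEq5}--\eqref{UnifConsistencyHatPsiEq6} and using \eqref{C:DiffConditionalDistribution2} in place of \eqref{C:DiffConditionalDistribution1}. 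I expect the genuine obstacle to lie here, and it is what makes the statement harder than the analogous one for the original series $X$: $\hatPsib$, hence $A_{n,t}$, depends on the \emph{entire} sample $X_0,\dots,X_{n+1}$ (future observations included), so the summands $\vartheta_t$ are \emph{not} adapted to the natural filtration and $\sum_t\vartheta_t$ is not a (near‑)martingale, so the exponential‑inequality step cannot be run naively. The remedy is to pin $\hatPsib$ down first — as a data‑measurable map that is uniformly $\op(1)$‑close to $\Psi$ on $\cU(r_n)$ by Corollary~\ref{C:UnifConsistencyHatPsi} and whose relevant image set has controlled metric entropy by Condition~\ref{C:Covering} — and only then run the martingale argument against the $\epsilon_t$, paying for the coupling between $\epsilon_t$ and $(A_{n,t},\1{t\in\cI_n})$ through the $\beta$‑mixing and Radon--Nikod\'ym bounds of Condition~\ref{C:BetaMixing}; making the three covering levels (the $\cU(r)$‑net in $z$, the entropy bound for $\hatPsib$ from Condition~\ref{C:Covering}, the $s$‑grid) simultaneously compatible with the bandwidth constraints of Condition~\ref{C:Bandwidth} — where $h/b\to0$ and $\ell_n=o(b(n\phi(h))^{-1/2}(\log n)^{-1})$ enter — is the delicate bookkeeping.

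Finally, given Part~1 for every fixed $r$, the diverging sequence $(r'_n)$ is obtained as $(r_n)$ was in Corollary~\ref{C:UnifConsistencyHatPsi}: with $a_k=Sk$ and positive $f_k,g_k\downarrow0$, inductively let $m_k$ be the first index past $m_{k-1}$ beyond which the probability that the supremum over $z\in\cU(a_k)$, $s\in[0,1]$ in \eqref{E:PConvergenceResiduals1} (resp.\ \eqref{E:PConvergenceResiduals2}) exceeds $g_k$ is at most $f_k$ — possible by Part~1 since $f_k,g_k$ are fixed for each $k$ — and set $r'_n=Sk$ for $m_k\le n<m_{k+1}$; then $r'_n\to\infty$, and for any $\epsilon,\delta>0$ the relevant supremum exceeds $\delta$ with probability at most $\epsilon$ for all large $n$.
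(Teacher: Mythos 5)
Your overall architecture is essentially the paper's: augment the filtration so the summands are adapted, cover $\cU(r)\times[0,1]$, close the discrete maximum with an exponential inequality whose first‑moment term is paid for by Condition~\ref{C:BetaMixing}, and diagonalize to produce $(r'_n)$. You also identify the same central obstruction — $A_{n,t}$ depends on the entire sample, so $\sum_t\vartheta_t$ is not adapted to the natural filtration — and the remedy you gesture at matches the paper's: work with the enlarged $\cF_t=\sigma(X_1,\dots,X_t,\Psi(\cdot)-\hatPsib(\cdot),\bar\epsilon)$, which makes $\vartheta_t$ $\cF_t$‑measurable at the price of a nontrivial conditional law of $\epsilon_t$ given $\cF_{t-1}$; this is then handled via \eqref{C:DiffConditionalDistribution1}, the H\"older control \eqref{SmallBallLipschitz} of the small‑ball function on the event $\sup_{x\in\cU(r_n)}\|\Psi(x)-\hatPsib(x)-\bar\epsilon\|\le\delta$, and the $\beta$‑mixing bound. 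Your split $\sum_t(\vartheta_t-\E{\vartheta_t\mid\cF_{t-1}})+\sum_t(\E{\vartheta_t\mid\cF_{t-1}}-\E{\vartheta_t})$ is a legitimate reorganization of the paper's single‑shot Laplace‑transform estimate, which absorbs the conditional‑bias term as the first‑order moment; both routes work.

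There is, however, a concrete gap in your treatment of the vector display \eqref{E:PConvergenceResiduals2}. You claim the coordinate split at $m\asymp\log n$ proceeds ``exactly as in \eqref{UnifConsistencyHatPsiEq5}--\eqref{UnifConsistencyHatPsiEq6}'', but those displays are not analogous: there the $e_k$‑coefficient enters through $X_{k,t+1}$, whose tail $\sum_{k>m}\E{|X_{k,t+1}|^2}$ is controlled by Condition~\ref{C:Process}~\ref{C:Process1} uniformly over the covering index because $\Delta_{h,t}(z_u)\le K(0)$. Here the coordinate is $\langle z_i+\epsilon'_t-x,e_k\rangle$, so one must control $\sum_{k>m}\langle z_i,e_k\rangle^2$ — and a generic $z\in\cU(r)$ is only constrained in the $\sup$‑ and Lipschitz‑norm, with no exponential coefficient decay, so the infinite‑dimensional remainder does not collapse. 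This is exactly why the statement carries the indicator $\1{\exists y\in\cS:\hatPsib(y)=z}$: the paper restricts attention to a decaying set $\cT_m=\{z\in\cS:\sum_{k>m}\langle z,e_k\rangle^2\le a_0\exp(-a_1 m/2)\}$, shows the range of $\hatPsib$ lies in $\cT_m$ except with probability $\cO(n\exp(-a_1 m/2))$ (negligible for $m\asymp\log n$ with a suitable constant), chooses the covering centers $z_i$ inside $\cT_m$, and only then runs Markov on the tail. Without that device the argument is incomplete. Two smaller inaccuracies: Lemma~\ref{UnifConvHatF} concerns the conditional small‑ball probability of the process $X$, not of the residuals $\epsilon'_t$, and plays no role in the conditional‑bias term — what closes it is \eqref{SmallBallLipschitz} together with the $\op(1)$ bound on $\|A_{n,t}\|$; and Condition~\ref{C:Covering} is about the random set $\cV_n$ and is used only in Lemma~\ref{L:UniformConvergenceInNeighborhood}, not here, where the covering of $\cU(r)$ is the ordinary Lipschitz‑ball bound for $d=1$.
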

\begin{proof}
In the first part, we show the claim for \eqref{E:PConvergenceResiduals1}, \eqref{E:PConvergenceResiduals2} follows similarly, is however more complex as the random variables are $\cH$-valued and is demonstrated in the second part. Then by taking the pairwise minimum in both sequences, we obtain the desired sequence $r_n$.

\textit{Part I:} Clearly, $|\cI_n| n^{-1} \rightarrow 1$ in probability. Furthermore, $\p(\|z+\epsilon'_t-x\|\le hs, t\in\cI_n)$ can be replaced with $\p(\|z+\epsilon'_t-x\|\le hs)\1{t\in\cI_n}$. Indeed, on the one hand, uniformly in $s$ and $z$
\begin{align*}
			&\phi(h)^{-1} |\p(\|z+\epsilon'_t-x\|\le hs, t\in\cI_n) - \p(\|z+\epsilon'_t-x\|\le hs) | \\
			&\le  \phi(h)^{-1} | \p(\|z+\epsilon'_t-x\|\le hs, t\in\cI_n) - \E{ \p(\|z+\epsilon_t+a-x\|\le hs)|_{a=A_{n,t}} \1{t\in\cI_n} } | \\
			&\quad + \phi(h)^{-1} |  \E{ \p(\|z+\epsilon_t+a-x\|\le hs)|_{a=A_{n,t}}\1{t\in\cI_n} - \p(\|z+\epsilon_t+a-x\|\le hs) |_{a=A_{n,t}}  } | \\
			&\quad + \phi(h)^{-1} | \E{  \p(\|z+\epsilon_t+a-x\|\le hs) |_{a=A_{n,t}} } -  \p(\|z+\epsilon'_t-x\|\le hs)  |.
\end{align*}
Using Proposition~2 in \cite{krebs2018large}, the first and the third term on the right-hand side are $\cO(\beta(\epsilon_t,(A_{n,t},\1{t\in \cI_n}))\phi(h)^{-1} )$ which is $o(1)$. The second term is at most $\phi(h)^{-1} \p( \|\epsilon_t-z_0\| \le hs ) (1-\p(t\in\cI_n)) = o(1)$.

And on the other hand,
\begin{align*}
			&\phi(h)^{-1} |\p(\|z+\epsilon'_t-x\|\le hs) - \p(\|z+\epsilon'_t-x\|\le hs)  \1{t\in \cI_n} |\\
			&\le \cO( \1{t \notin \cI_n} ) + \cO\big( \phi(h)^{-1} \beta(\epsilon_t, (A_{n,t},\1{t\in\cI_n}) ) \big)
\end{align*}
uniformly in $z$ and $s$. As $n^{-1} \sum_{t=1}^n \1{t \notin \cI_n}$ is $\op(1)$, this shows that we can perform the replacement. So instead of \eqref{E:PConvergenceResiduals1}, we consider
\begin{align} \label{E:PConvergenceResiduals3b}
			&\sup_{z\in\cU(r)} \sup_{s\in [0,1]} (n \phi(h))^{-1} \Big| \sum_{t=1}^n \big(\1{\|z+\epsilon'_t-x\|\le hs}	-\p(\|z+\epsilon'_t-x\|\le hs) \big) \1{t\in\cI_n} \Big|. 
\end{align}
Let $q>0$, then
\begin{align}
			&\p\Big( ( n \phi(h))^{-1} \big| \sum_{t=1}^n \big(\1{\|z+\epsilon'_t-x\|\le hs }- \p(\|z+\epsilon'_t-x\|\le hs) \big) \1{t\in\cI_n}		\big| > q \Big) \nonumber \\
			\begin{split}
			&\le \p\Big(  ( n \phi(h))^{-1} \big| \sum_{t=1}^n \big(\1{\|z+\epsilon'_t-x\|\le hs }- \p(\|z+\epsilon'_t-x\|\le hs)\big) 	\1{t\in\cI_n} 	\big| > q, \\
			&\qquad\qquad \sup_{x\in \cU(r_n) } \|\Psi(x)-\hat\Psi_{n,b}(x)-\bar\epsilon\| \le \delta \Big)  \label{E:PConvergenceResiduals4} \end{split} \\
			&\quad + \p\Big( \sup_{x\in \cU(r_n) } \|\Psi(x)-\hat\Psi_{n,b}(x)\| > \delta/2 \Big) + \p\Big(  \|\bar\epsilon\| > \delta/2 \Big), \label{E:PConvergenceResiduals5}
\end{align}
where $r_n$ is as in Corollary~\ref{C:UnifConsistencyHatPsi}. By construction, $\|\bar\epsilon\| \le |\cI_n|^{-1} \| \sum_{t\in\cI_n} \epsilon_t \| + \sup_{x\in\cU(r_n)} \| \hat\Psi_{n,b}(x) - \Psi(x) \|$; the first term which is the average of the innovations $\epsilon_t$ is $\op(1)$, this follows from the well-known concentration inequalities. Moreover, by Corollary~\ref{C:UnifConsistencyHatPsi}, the first probability in \eqref{E:PConvergenceResiduals5} converges to 0 for every $\delta>0$. 

In order to obtain a bound on \eqref{E:PConvergenceResiduals4}, we consider the Laplace transform
\begin{align*}
			&\mathbb{E}\Biggl[ \exp\Big( \eta \phi(h)^{-1}  \Big( \sum_{t=1}^n [\1{\|z+\epsilon'_t-x\|\le hs }- \p(\|z+\epsilon'_t-x\|\le hs)]\1{t\in\cI_n} \Big) \Big) \\
			&\qquad\qquad\qquad \times \1{ \sup_{x\in\cU(r_n)} \|\Psi(x)-\hat\Psi_{n,b}(x)-\bar\epsilon \| \le \delta} \Biggl]
\end{align*}
for $\eta,\delta>0$. For that reason, we define the filtration $\cF_t = \sigma( X_1,\ldots,X_t, \Psi(\cdot)-\hat\Psi_{n,b}(\cdot),\bar\epsilon )$ for $t=0,\ldots,n$. Then
\begin{align}
			&\E{ \exp\big( \eta \phi(h)^{-1} [\1{\|z+\epsilon'_t-x\|\le hs }  - \p(\|z+\epsilon'_t-x\|\le hs) ]\1{t\in\cI_n} \big) \,\Big|\, \cF_{t-1} } \nonumber \\
			&\le 1 + \eta \phi(h)^{-1}\Big| \E{ \1{\|z+\epsilon'_t-x\|\le hs } - \p(\|z+\epsilon'_t-x\|\le hs) \,\big|\, \cF_{t-1} } \Big| \1{t\in\cI_n}  \label{E:PConvergenceResiduals6} \\
			&\quad + \sum_{m=2}^\infty \frac{ (\eta \phi(h)^{-1})^m }{m!} \E{ \left| \1{\|z+\epsilon'_t-x\|\le hs }  - \p(\|z+\epsilon'_t-x\|\le hs) \right|^m \,\big|\, \cF_{t-1} }\1{t\in\cI_n}. \label{E:PConvergenceResiduals7}
\end{align}
Using Conditions~\ref{C:SmallBallProbability} and \ref{C:BetaMixing}, it is not difficult to derive that there is a constant $C'\in\R_+$ such that each summand in \eqref{E:PConvergenceResiduals7} is at most
\begin{align*}
		&\frac{ \eta^m \phi(h)^{-m} }{m!} \left\{ \p(\|z+\epsilon'_t-x\| \le hs\,|\, \cF_{t-1} ) + \p(	\|z+\epsilon'_t-x\| \le hs)^m	\right\}  \le  \frac{ C' \eta^m  }{m! \phi(h)^{m-1}},
\end{align*}
uniformly in $n\in\N$. Thus, if $\eta<\phi(h)$, \eqref{E:PConvergenceResiduals7} is at most
\[
			\sum_{m=2}^\infty \frac{ C' \eta^m  }{m! \phi(h)^{m-1}} \le \frac{ C' \eta^2}{\phi(h)} \sum_{m=0}^\infty \eta^m \phi(h)^{-m} \le  \frac{ C' \eta^2}{\phi(h)} \frac{1}{1-\eta/\phi(h)}.
\]
 Next, consider the second term in \eqref{E:PConvergenceResiduals6}. Using the requirement $\sup_{x\in\cU(r_n)} \|\Psi(x)-\hat\Psi_{n,b}(x)-\bar\epsilon \| \le \delta$, we see that $A_{n,t} = \Psi(X_{t-1})-\hat\Psi_{n,b}(X_{t-1})-\bar\epsilon$ has norm of at most $\delta$, if $t\in\cI_n$	. Then
\begin{align}
			& \eta\phi(h)^{-1} \Big| \p(\|z+\epsilon_t+A_{n,t}-x\| \le hs \,|\,\cF_{t-1} ) -  \p(\|z+\epsilon_t+A_{n,t}-x\|\le hs) \Big|  \1{t\in\cI_n} \nonumber \\
			\begin{split}\label{E:PConvergenceResiduals8}
			&\le \eta \phi(h)^{-1} \big|	\p(\|z+\epsilon_t+a-x\| \le hs\,|\,\cF_{t-1} )|_{a=A_{n,t}}	- \E{ \p(\|z+\epsilon_t+a-x\|\le hs)|_{a=A_{n,t}} }  \big| \1{t\in\cI_n}\\
			&\qquad\qquad\quad + 4 \eta \phi(h)^{-1} \beta(\epsilon_t,(A_{n,t},\1{t\in \cI_n})).
			\end{split}
\end{align}
It remains to compute the asymptotic behavior of the first term in \eqref{E:PConvergenceResiduals8}. This term is at most
\begin{align*}
			& \eta  \phi(h)^{-1} \Big| \p(\|z+\epsilon_t+a-x\| \le hs\,|\,\cF_{t-1} )|_{a=A_{n,t}} -  \E{ \p(\|z+\epsilon_t+a-x\|\le hs)|_{a=A_{n,t}} }\Big| \1{t\in\cI_n} \nonumber \\
				&\le \eta L_{n,t} + \eta \phi(h)^{-1}  \Big| \p(\|z+\epsilon_t+a-x\| \le hs) -  \p(\|z+\epsilon_t-x\|\le hs) \Big| \Big|_{a=A_{n,t}}  \1{t\in\cI_n} \nonumber \\
				&\quad +  \eta \phi(h)^{-1} \Big| \p(\|z+\epsilon_t-x\| \le hs) -   \E{ \p(\|z+\epsilon_t+a-x\|\le hs)|_{a=A_{n,t}} } \Big| \1{t\in\cI_n} \\
			&\le  \eta L_{n,t} + \eta \phi(h)^{-1} \Big| \p(\|z+\epsilon_t+a-x\| \le hs) -  \p(\|z+\epsilon_t-x\|\le hs) \Big| \;\Big|_{a=A_{n,t}} \1{t\in\cI_n} \nonumber \\
				&\quad +  \eta \, \E{\phi(h)^{-1} \Big| \p(\|z+\epsilon_t-x\| \le hs) -   \p(\|z+\epsilon_t+a-x\|\le hs)|_{a=A_{n,t}} \Big|} .
\end{align*}
Using the fact that $\|A_{n,t}\|\le \delta$ and the continuity properties of the small ball probability function, one finds that this term is of order $\cO((L_{n,t}+\delta^\alpha + \p( \|A_{n,t}\|>\delta ) ) \eta  )$.

Consequently, applying well-known inequalities, we obtain for the probability in \eqref{E:PConvergenceResiduals4} without considering the normalization by $n^{-1}$
\begin{align}
		 &\p\Big(  \phi(h)^{-1} \big| \sum_{t\in\cI_n} \1{\|z+\epsilon'_t-x\|\le hs } - \p(\|z+\epsilon'_t-x\|\le hs) 		\big| > q, \sup_{x\in \cU(r_n) } \|\Psi(x)-\hat\Psi_{n,b}(x)-\bar\epsilon\| \le \delta \Big) \nonumber \\
			\begin{split}\label{E:PConvergenceResiduals9}
			&\le 2 \exp\Big\{- \eta q + C' \phi(h)^{-1} \frac{\eta^2}{1- \eta \phi(h)^{-1}} n  + c \eta (n^{-1} \sum_{t=1}^n L_{n,t}+\delta^\alpha + p_n(\delta) + o(1) ) n  \Big\}
			\end{split}
\end{align}			
for some $c\in\R_+$ and where $p_n(\delta) = \p( \sup_{x\in\cU(r_n)} \|\Psi(x)-\hat\Psi_{n,b}(x)-\bar\epsilon\|  > \delta )+\p(t\notin \cI_n)$. An admissible choice of $\eta$ is $q (2 C' \phi(h)^{-1} + q \phi(h)^{-1} )^{-1}$ which is less than $\phi(h)$. Using this choice in \eqref{E:PConvergenceResiduals9}, we obtain after normalization
\begin{align}\begin{split}\label{E:PConvergenceResiduals10}
		 &\p\Big(  (n\phi(h))^{-1} \Big| \sum_{t\in\cI_n} \1{\|z+\epsilon'_t-x\|\le hs } - \p(\|z+\epsilon'_t-x\|\le hs) 		\Big| > q,  \\
			&\quad \sup_{x\in \cU(r_n) } \|\Psi(x)-\hat\Psi_{n,b}(x)-\bar\epsilon\| \le \delta \Big) \le 2 \exp\Big\{- \frac{1}{2} \frac{q^2 -2 c q (\delta^\alpha + p_n(\delta) + o(1) ) }{2 C' + q} n\phi(h) \Big\}, 
			\end{split} 
\end{align}
where we use that also $n^{-1} \sum_{t=1}^n L_{n,t} = o(1)$. Clearly, for $q>0$ arbitrary but fixed, there is an $N_0\in\N$ and a $\delta>0$ such that the fraction inside the exponential function in \eqref{E:PConvergenceResiduals10} is positive for all $n\ge N_0$. This shows convergence in probability for fix $z\in \cS$ and $s\in [0,1]$.

Next, we consider \eqref{E:PConvergenceResiduals3b} for a set $\cU(r)$ with fix radius $r$. We argue as in the proof of Theorem~\ref{UnifConsistencyHatPsi} to see that for every fixed $r$, we have also uniform convergence on the parameter space $\cU(r)\times [0,1]$; the covering number of $[0,1]$ is negligible. For this we proceed as follows. Set
\[
		Y_{n,t}(z,s) = \phi(h)^{-1} \left(\1{\|z+\epsilon'_t-x\|\le hs } - \p(\|z+\epsilon'_t-x\|\le hs) \right) \1{t\in\cI_n} .
\]
Consider an $\veps_n$-covering $\{ U(z_i,\veps_n): i=1,\ldots,\kappa_n\}$ of $\cU(r)$ and a $\veps'_n$-covering of $[0,1]$, $\{ B(s_j,\veps'_n): j=1,\ldots,\kappa'_n\}$, where $B(s_j,\veps'_n)$ is the interval $[s_j-\veps'_n,s_j+\veps'_n]$. Note that we can choose the points $s_j$ on an equidistant grid. The radii are as follows
\[
			\veps_n = (n\phi(h))^{-1} \, (\log n)^2 \text{ and } \veps'_n = \veps_n \,h^{-1} = (h (n\phi(h))^{1/2} )^{-1} \, \sqrt{(n\phi(h))^{-1} (\log n)^4 } = o(1)
\]
because $n\phi(h) (\log n)^{-2(2+\alpha)}\rightarrow\infty$ by assumption. We bound the supremum from above as 
\begin{align}\begin{split}\label{E:PConvergenceResiduals11}
		\sup_{\substack{z\in\cU(r)\\ s\in [0,1]}} \Big| n^{-1} \sum_{t=1}^n Y_{n,t}(z,s) \Big| &\le \max_{\substack{i=1,\ldots,\kappa_n\\ j=1,\ldots,\kappa'_n}} \Big| n^{-1} \sum_{t=1}^n Y_{n,t}(z_i,s_j) \Big| \\
		&\quad +   \max_{\substack{i=1,\ldots,\kappa_n\\ j=1,\ldots,\kappa'_n}} \sup_{\substack{z \in U(z_i,\veps_n)\\ s\in B(s_j,\veps'_n)}}  \Big| n^{-1} \sum_{t=1}^n Y_{n,t}(z_i,s_j) - Y_{n,t}(z,s) \Big|. 
\end{split}\end{align}

Consider the first maximum in \eqref{E:PConvergenceResiduals11}. We perform a split as in \eqref{E:PConvergenceResiduals4} and \eqref{E:PConvergenceResiduals5} and use the result from \eqref{E:PConvergenceResiduals10}. Then it suffices to consider 
\begin{align}
			&\p\Biggl(  \max_{\substack{i=1,\ldots,\kappa_n\\ j=1,\ldots,\kappa'_n}} \Big| n^{-1} \sum_{t=1}^n Y_{n,t}(z_i,s_j) \Big| > q , \sup_{x\in \cU(r_n) } \|\Psi(x)-\hat\Psi_{n,b}(x)-\bar\epsilon\| \le \delta \Biggl) \nonumber \\
			&\le 2 \kappa_n \kappa'_n \exp\Big\{- \frac{1}{2} \frac{q^2 -2 c q (\delta^\alpha + p_n(\delta) + o(1) ) }{2C'+ q} n\phi(h) \Big\}.\label{E:PConvergenceResiduals12}
\end{align}
Now $\kappa'_n$ is $\cO( (\veps'_n)^{-1}) = \cO( (n\phi(h))^{1/2} (\log n)^{-2} )$ which grows less than $n$. And as $d=1$, $\log \kappa_n$ is proportional to $\veps_n^{-1}$ which is $\cO( n\phi(h) (\log n)^{-2})$. This shows that for $\delta$ sufficiently small, \eqref{E:PConvergenceResiduals12} is summable over $n\in\N$ for all $q>0$. Consequently, the first term on the right-hand side in \eqref{E:PConvergenceResiduals11} is $\op(1)$.

The second maximum in \eqref{E:PConvergenceResiduals11} is more involved. We need to consider the difference of the indicator functions and the difference of the probabilities separately. Here, we only investigate the difference of the indicator functions in this term, the difference of the probabilities is less complicated and works with the same ideas. We have
\begin{align}
			&|\1{\|z+\epsilon'_t-x\|\le hs } - \1{\|z_i+\epsilon'_t-x\|\le hs_j } |  \nonumber \\
			&\le 2 \1{h(s_j-\veps'_n)-\veps_n \le \|z_i+\epsilon'_t-x\| \le h(s_j+\veps'_n)+\veps_n }, \label{E:PConvergenceResiduals13}
\end{align}
which is independent of $z$ and $s$. The expectation \eqref{E:PConvergenceResiduals13} when multiplied by $\phi(h)^{-1}$ is of order
\begin{align*}
			&\phi(h)^{-1} \p( h(s_j-\veps'_n)-\veps_n \le \|z_i+\epsilon'_t-x\| \le h(s_j+\veps'_n)+\veps_n ) \\
			&= \cO\Big(\frac{\phi(h s_j)}{\phi(h)} \frac{h\veps'_n + \veps_n}{h s_j} \Big) + \cO( \phi(h)^{-1} \beta(\epsilon_t,(A_{n,t},\1{t\in\cI_n})) ) \\
			&= \cO\Big( \tau_0(j\veps'_n) j^{-1} \Big) + \cO( \phi(h)^{-1} \beta(\epsilon_t,(A_{n,t},\1{t\in\cI_n})) ).
\end{align*}
As $\tau_0(0)=0$ and as $\tau_0$ is continuous in a neighborhood of zero, this last bound is $o(1)$ ($n\rightarrow\infty$) uniformly in $1\le i\le \kappa_n$ and $1\le j\le \kappa'_n$. So for \eqref{E:PConvergenceResiduals13}, it remains to consider the following probabilities (for $q>0$)
\begin{align}\begin{split}\label{E:PConvergenceResiduals14}
		& \p\Big( \max_{\substack{i=1,\ldots,\kappa_n\\ j=1,\ldots,\kappa'_n}}  (n\phi(h))^{-1} \Big| \sum_{t=1}^n \1{h(s_j-\veps'_n)-\veps_n \le \|z_i+\epsilon'_t-x\| \le h(s_j+\veps'_n)+\veps_n } \\
		&\quad - \p( h(s_j-\veps'_n)-\veps_n \le \|z_i+\epsilon'_t-x\| \le h(s_j+\veps'_n)+\veps_n ) \Big| > q, \\
		&\qquad \sup_{x\in \cU(r_n) } \|\Psi(x)-\hat\Psi_{n,b}(x)-\bar\epsilon\| \le \delta \Big).
\end{split}\end{align}
Again, we can proceed as in the derivation of \eqref{E:PConvergenceResiduals9} and obtain a result as in \eqref{E:PConvergenceResiduals12}. I.e., for all $q>0$, \eqref{E:PConvergenceResiduals14} is $\cO( \kappa_n \kappa'_n \exp( - c n\phi(h) )$ for $n$ sufficiently large, $\delta$ sufficiently small and some constant $c\in\R_+$ which depends on the choice of $q$ and $\delta$. Consequently, also the second term in \eqref{E:PConvergenceResiduals11} is of order $\op(1)$. This shows \eqref{E:PConvergenceResiduals3b} is $\op(1)$.

\textit{Part II:} We come to the proof of \eqref{E:PConvergenceResiduals2}. Again, note that
\begin{align*}
		&\sup_{z\in\cS}\, (n\phi(h))^{-1} \Big\| \sum_{t=1}^n \E{K(h^{-1} \|z+\epsilon'_t-x\| ) (h^{-1} (z+\epsilon'_t-x) ) \1{t\in\cI_n} } \\
		&\qquad\qquad\quad - \E{K(h^{-1} \|z+\epsilon'_t-x\| ) (h^{-1} (z+\epsilon'_t-x) ) } \1{t\in\cI_n} \Big\| \\
		&= \cO( \p(t \notin\cI_n) ) + \cO( n^{-1} \sum_{t=1}^n \1{t\notin\cI_n} ) + o(1) = \op(1).
		\end{align*}
Define for each $1\le t\le n$ the Hilbert space valued random variables
\begin{align}\begin{split}\label{E:PConvergenceResiduals15}
			Y_{n,t}(z) &=  \phi(h)^{-1}  \Big( K(h^{-1} \|z+\epsilon'_t-x\| ) (h^{-1} (z+\epsilon'_t-x) ) \\
			&\quad - \E{K(h^{-1} \|z+\epsilon'_t-x\| ) (h^{-1} (z+\epsilon'_t-x) ) } \Big) \1{t\in\cI_n}
\end{split}\end{align}
and set $Y_{n,t,k}(z) = \langle Y_{n,t}(z), e_k\rangle$ for the projections onto $e_k$ ($k\in\N$).
Let $m\in\N$ be equal to $[c' \log n]$ for a sufficiently large $c'\in\R_+$, the exact value can be derived below. Define the set of functions which decay at an exponential rate by
\[
		\cT_m = \Big\{z\in\cS: \sum_{k>m} \langle z,e_k\rangle^2 \le a_0 \exp\Big(-\frac{a_1}{2} m\Big) \Big\},
	\]
where the constants are from Condition~\ref{C:Process}~\ref{C:Process2}. So, using that $\sum_{k>m}\E{|X_{k,t}|^2}\le a_0\exp(-a_1m)$, we find
\begin{align}
			&\p\left( z = \hat{\Psi}_{n,b}(y) \text{ for some } y\in\cS, \: z\notin \cT_m		\right) \nonumber \\
			&= \p\left(	\sup_{ y\in\cS} \; \sum_{k>m} \Big( \sum_{t=1}^n \Delta_{h,t}(y) \Big(\sum_{t=1}^n \Delta_{h,t}(y) \Big)^{-1}  \langle X_{t+1},e_k\rangle \Big)^2 > a_0 \exp\Big(-\frac{a_1}{2} m\Big)\right) \nonumber\\
			&\le \p\left(	 n \sum_{k>m} |X_{k,t+1}|^2 > a_0 \exp\Big(-\frac{a_1}{2} m\Big) \right) \nonumber\\
			&\le a_0^{-1} n \exp\Big(\frac{a_1}{2} m\Big) \E{ \sum_{k>m} |X_{k,t+1}|^2 }		\le  n \exp\Big(-\frac{a_1}{2} m\Big), \label{E:PConvergenceResiduals15b}
\end{align}
which vanishes if $c'> 2/a_1+1$. Consider a similar covering as in the first part such that $\cU(r)\subseteq \cup_{i=1}^{\kappa_n} U(z'_i,\veps_n/2)$. Then for each $z'_i$ either $U(z'_i,\veps_n/2)\cap \cT_m = \emptyset$ or there is a $z_i \in U(z'_i,\veps_n/2)$ which satisfies also the tail condition $\cT_m$. I.e., we obtain an $\veps_n$-covering $\cU(r)\subseteq \cup_{i=1}^{\kappa_n} U(z_i,\veps_n)$ where the $z_i$ are also in $\tau_m$. Then
\begin{align}
		& \sup_{z\in\cU(r) } \1{z=\hat\Psi(y) \text{ for some } y\in\cS} n^{-1} \Big\| \sum_{t=1}^n Y_{n,t}(z)  \Big\|  \nonumber\\
		&\le 2 \max_{1\le i \le \kappa_n} \1{z=\hat\Psi(y) \text{ for some } y\in\cS} n^{-1} \Big\| \sum_{t=1}^n Y_{n,t}(z_i)   \Big\|   \label{E:PConvergenceResiduals16} \\
		&\quad + \max_{1\le i \le \kappa_n} \sup_{z\in U(z_i,\veps_n) } \Big\| \sum_{t=1}^n Y_{n,t}(z) - Y_{n,t}(z_i) \Big\| . \label{E:PConvergenceResiduals17}
\end{align}
We split \eqref{E:PConvergenceResiduals16} in a finite-dimensional term and an infinite-dimensional remainder. The finite-dimensional term can be treated with the same tools we used in the first part of the proof. We apply the decay assumptions to the infinite-dimensional remainder. We omit the indicator $\mathds{1}\{z=\hat\Psi(y) \text{ for some } y\in\cS		\}$ in the finite-dimensional part and obtain
\begin{align}
		&\p\Big(	 \max_{1\le i \le \kappa_n}  n^{-2} \sum_{k=1}^m \Big| \sum_{t=1}^n Y_{n,t,k}(z_i)  \Big|^2 > q^2 	\Big) \nonumber \\
		&\le \p\Big(	 \max_{1\le i \le \kappa_n}  n^{-2} \sum_{k=1}^m \Big| \sum_{t=1}^n Y_{n,t,k}(z_i)  \Big|^2 > q^2 , \sup_{x\in \cU(r_n) } \|\Psi(x)-\hat\Psi_{n,b}(x)-\bar\epsilon\| \le \delta	\Big) \nonumber  \\
		&\quad + \p\Big(\sup_{x\in \cU(r_n) } \|\Psi(x)-\hat\Psi_{n,b}(x)-\bar\epsilon\| > \delta	 \Big) \nonumber \\
		\begin{split}\label{E:PConvergenceResiduals18}
		&\le \sum_{i=1}^{\kappa_n } \sum_{k=1}^m \p\Big(	n^{-1} \Big| \sum_{t=1}^n Y_{n,t,k}(z_i)  \Big| > q m^{-1/2},  \sup_{x\in \cU(r_n) } \|\Psi(x)-\hat\Psi_{n,b}(x)-\bar\epsilon\| \le \delta 	\Big) \\
		&\quad + \p\Big(\sup_{x\in \cU(r_n) } \|\Psi(x)-\hat\Psi_{n,b}(x)-\bar\epsilon\| > \delta \Big), 
		\end{split}
\end{align}
for all $q,\delta>0$. One can derive an exponential inequality for the probability inside the sum in \eqref{E:PConvergenceResiduals18} which relies on the Laplace transform and on the approach from \eqref{E:PConvergenceResiduals6} and \eqref{E:PConvergenceResiduals8}. We only give the details for the conditional mean (i.e., the term that corresponds to \eqref{E:PConvergenceResiduals6}), the higher moments follow then with similar computations. The filtration $(\cF_t)_{t}$ is the same as in the first part. Then for $z=z_i$
\begin{align*}
		\Big| \E{ Y_{n,t,k}(z) | \cF_{t-1} } \Big|
		&= \phi(h)^{-1}	\Big| \mathbb{E}[ K(h^{-1}\|z+\epsilon_t+A_{n,t}-x\|) (h^{-1} \langle z+\epsilon_t+A_{n,t}-x,e_k \rangle) \,|\, \cF_{t-1} ] \\
			&\quad - \E{ K(h^{-1}\|z+\epsilon_t+A_{n,t}-x\|) (h^{-1} \langle z+\epsilon_t+A_{n,t}-x,e_k \rangle) }  	\Big|\1{t\in\cI_n} \\
			&\le L_{n,t} + \phi(h)^{-1}	\bigg|\mathbb{E}[ K(h^{-1}\|z+\epsilon_t+a-x\|)  (h^{-1} \langle z+\epsilon_t+a-x,e_k \rangle)]\Big|_{a=A_{n,t}} \\
			&\quad - \E{ \E{ K(h^{-1}\|z+\epsilon_t+a-x\|) (h^{-1} \langle z+\epsilon_t+a-x,e_k \rangle) } \Big|_{a=A_{n,t} } } \,\bigg| \1{t\in\cI_n} 	\\
			&\quad + \cO\big( \phi(h)^{-1}\beta(\epsilon_t,(A_{n,t},\1{t\in\cI_n})) \big) \Big|.
	\end{align*}
	If we use additionally the functionals $\varphi_y$ and their projections onto the basis vectors $\varphi_{y,k}(u) = \langle \varphi_y(u),e_k\rangle$, we obtain for the right-hand side of the last inequality the upper bound
	\begin{align}
			\begin{split}\label{E:PConvergenceResiduals19}
			& L_{n,t} + \phi(h)^{-1}	\bigg| \mathbb{E}\Big[ K(h^{-1}\|z+\epsilon_t+a-x\|) (h^{-1} \varphi_{x-z-a,k}(\|z+\epsilon_t+a-x\|)) \\
			& \quad - K(h^{-1}\|z+\epsilon_t-x\|) (h^{-1} \varphi_{x-z,k}(\|z+\epsilon_t-x\|)) \Big]\Big|_{a=A_{n,t}} \bigg| \1{t\in\cI_n} \\
			&\quad + \mathbb{E}\bigg[ \phi(h)^{-1} \bigg| \E{K(h^{-1}\|z+\epsilon_t-x\|) (h^{-1} \varphi_{x-z,k}(\|z+\epsilon_t-x\|))} \\
			&\quad - \E{ K(h^{-1}\|z+\epsilon_t+a-x\|) (h^{-1}\varphi_{x-z-a,k}(\|z+\epsilon_t+a-x\|) ) } \Big|_{a=A_{n,t} }  \bigg|\bigg]\\
			&\quad + \cO\big( \phi(h)^{-1}\beta(\epsilon_t,(A_{n,t},\1{t\in\cI_n})) \big).
			\end{split}
\end{align}
Consider \eqref{E:PConvergenceResiduals19} on the set $\{ \sup_{x\in\cU(r_n)} \|\Psi(x)-\hat{\Psi}_{n,b}(x)-\bar\epsilon \| \le \delta \}$ (for $\delta>0$). Using the asymptotic properties of the linear approximation of the functionals $\varphi_y$ near 0, we find that \eqref{E:PConvergenceResiduals19} is
\begin{align*}
		&\cO( L_{n,t}+h^\alpha) +\cO( \delta^\alpha + p_n(\delta) ) + \cO\big(  \phi(h)^{-1}\beta(\epsilon_t,(A_{n,t},\1{t\in\cI_n})) \big),
\end{align*}
uniformly in $z$. These considerations lead to an upper bound of \eqref{E:PConvergenceResiduals18} of the form
\begin{align*}
		\cO\Bigg( \kappa_n m \exp\bigg\{- \frac{1}{2} \frac{q^2 -2 c_1 q (\delta^\alpha + p_n(\delta) + o(1) ) }{c_2 + qm^{-1/2} } \frac{n\phi(h)}{m} \bigg\} \Bigg) + o(1),
\end{align*}
for certain constants $c_1,c_2\in\R_+$. As $\log \kappa_n = \cO( \veps_n^{-1}) = \cO( n\phi(h) (\log n)^{-2})$, we see that this upper bound is $o(1)$ for every choice of the constant $c'$ in the definition of $m$.

Next, we consider the infinite-dimensional remainder in \eqref{E:PConvergenceResiduals16}. Here we need the restriction that $z$ is a function the coefficients of which decay at an exponential rate. Let $J_n = \{ 2^{-1} n \le |\cI_n| \le 2 n\}$. Then $\p(J_n)\rightarrow 1$ as $n\rightarrow\infty$. Moreover due to the result from \eqref{E:PConvergenceResiduals15b}, it is enough to consider
\begin{align*}
			&\p\Big( \Big\{\max_{1\le i \le \kappa_n} \1{z_i = {\hat\Psi}_{n,b}(y) \text{ for some } y\in\cS, z_i \in \cT_m} n^{-2} \sum_{k>m} \Big| \sum_{t=1}^n Y_{n,t,k}(z_i)  \Big|^2 > q^2\Big\} \cap J_n \Big),
\end{align*}
for $q>0$ arbitrary but fixed. An application of Markov's inequality yields that this probability is dominated by
\begin{align*}
			&q^{-2} (n\phi(h))^{-2} \sum_{k>m} \E{ \max_{1\le i \le \kappa_n} \1{z_i\in\cT_m} \1{J_n} \Big|\sum_{t=1}^n K(h^{-1} \|z_i+\epsilon'_t-x\|)(h^{-1} \langle z_i+\epsilon'_t-x, e_k \rangle)	\Big|^2 }  \\
			&= \cO\Big(	h^{-2} \phi(h)^{-2} n^{-1} \sum_{t=1}^n \sum_{k>m} \Big\{ \E{\1{J_n}  \langle \epsilon'_t,e_k \rangle^2}	+ \exp\Big(- \frac{a_1}{2} m	\Big) \Big\}	\Big),
\end{align*}
where we use the decay of $x$ and the fact that $z_i$ is in $\cT_m$. Next, use that decay of $\langle \epsilon'_t,e_k \rangle^2$ is determined by the decay of the coefficients $\langle \epsilon_t,e_k \rangle^2$, $\langle X_t,e_k \rangle^2$ and $\langle \hatPsib(X_{t-1}),e_k \rangle^2$ as well as $\langle \bar\epsilon,e_k \rangle^2$.

By assumption,	$\sum_{k>m} \E{ \langle \epsilon_t,e_k \rangle^2 + \langle X_t,e_k \rangle^2 } = \cO( \exp( -a_1 m)$. Moreover, $\sum_{k>m} \Es{ \langle \hatPsib(X_{t-1}),e_k \rangle^2 } = \cO( n \exp( -a_1 m) )$ uniformly in $1\le t\le n$. Furthermore, one can compute that $\E{\1{J_n} \langle \bar\epsilon,e_k \rangle^2 } = \cO(n \exp( -a_1 m) )$. This yields that also the infinite-dimensional remainder in \eqref{E:PConvergenceResiduals16} is of order $\op(1)$.

Finally, we give some details on \eqref{E:PConvergenceResiduals17}, which essentially can be treated as the corresponding term in the first part of the proof. We consider the difference $Y_{n,t}(z) - Y_{n,t}(z_i)$. We study the random part of this difference, it is at most
\begin{align}
		&\max_{1\le i\le \kappa_n} \sup_{z\in U(z_i,\veps_n) } (n\phi(h))^{-1} \sum_{t=1}^n \bigg\{ K(h^{-1} \|z_i +\epsilon'_t-x\|) \frac{\|z_i-z\|}{h} \nonumber\\
		&\quad + \big|K(h^{-1} \|z_i +\epsilon'_t-x\|) - K(h^{-1} \|z +\epsilon'_t-x\|) \big| \frac{\|z+\epsilon'_t -x\|}{h} \bigg\} \nonumber \\
		\begin{split}\label{E:PConvergenceResiduals20}
		&\le\max_{1\le i\le \kappa_n}  (n\phi(h))^{-1} \sum_{t=1}^n K(h^{-1} \|z_i +\epsilon'_t-x\|) \frac{\veps_n}{h} \\
		&\quad + \max_{1\le i\le \kappa_n} \sup_{z\in U(z_i,\veps_n) } (n\phi(h))^{-1} \sum_{t=1}^n  \big|K(h^{-1} \|z_i +\epsilon'_t-x\|) - K(h^{-1} \|z +\epsilon'_t-x\|) \big| \frac{h+\veps_n}{h}. \end{split}
\end{align}
The expectation of first term in \eqref{E:PConvergenceResiduals20} is $\cO( \veps_n h^{-1}) + o(1)$ uniformly in $i$. Moreover, one can derive as before that
\[
			\max_{1\le i\le \kappa_n}  (n\phi(h))^{-1} \Big| \sum_{t=1}^n K(h^{-1} \|z_i +\epsilon'_t-x\| - \E{K(h^{-1} \|z_i +\epsilon'_t-x\|} \Big|	 = \op(1),\quad n\rightarrow\infty.
\]
Next, we study the second term in \eqref{E:PConvergenceResiduals20}, the same approach, which leads to the derivation of \eqref{UnifConsistencyHatPsiEq9}, yields
\[
			\big|K(h^{-1} \|z_i +\epsilon'_t-x\|) - K(h^{-1} \|z +\epsilon'_t-x\|) \big| = \cO\Big(\veps_n \, h^{-1} + \1{h-\veps_n \le \|z_i+\epsilon'_t-x\|\le h+\veps_n} 		\Big). 
\]
And
\[
	(n\phi(h))^{-1} \sum_{t=1} \p(h-\veps_n \le \|z_i+\epsilon'_t-x\|\le h+\veps_n ) = \cO(\veps_n \, h^{-1} ) + o(1).
	\]
Furthermore,
\[
			\max_{1\le i\le \kappa_n} n\phi(h))^{-1} \Big| \sum_{t=1}^n \1{h-\veps_n \le \|z_i+\epsilon'_t-x\|\le h+\veps_n} - \p(h-\veps_n \le \|z_i+\epsilon'_t-x\|\le h+\veps_n )  \Big| = \op(1).
\]
This shows that \eqref{E:PConvergenceResiduals20} is $\Op( \veps_n h^{-1})$ which is $\op(1)$. This completes the proof of \eqref{E:PConvergenceResiduals2}. 
\end{proof}

\begin{lemma}\label{L:UniformConvergenceInNeighborhood}
Consider the sets $\cV_n$ and Condition~\ref{C:Covering}. Then
\[
		\sup_{y\in \cV_n\cap \cU(r_n) } \norm{ \hatPsib(y)-\Psi(y) - (\hatPsib(x)-\Psi(x)) } \1{ \cV_n \text{ satisfies ($\ast$) at $n$} } = \oas( (n\phi(h))^{-1/2} ).
		\]
\end{lemma}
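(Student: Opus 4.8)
The plan is to run the argument of Theorem~\ref{UnifConsistencyHatPsi} once more, but applied to the \emph{increment} $y\mapsto(\hatPsib(y)-\Psi(y))-(\hatPsib(x)-\Psi(x))$, using the random covering supplied by Condition~\ref{C:Covering} as the discretisation. I would work throughout on the event $\{\cV_n\text{ satisfies }(\ast)\text{ at }n\}$, write $g(y):=\hatPsib(y)-\Psi(y)$, and recall $\cV_n\subseteq U(x,h)$ and $x\in\cU(r_1)\subseteq\cU(r_n)$. Since $\cS$ is a linear subspace and $\hatPsib$ maps into $\cS$, one has $\cV_n\subseteq\cS$; for every index $i$ for which $U(z_{n,i},\ell_n)$ meets $\cV_n\cap\cU(r_n)$ I pick a point $\tilde z_{n,i}\in U(z_{n,i},\ell_n)\cap\cV_n\cap\cU(r_n)$, so that $\cV_n\cap\cU(r_n)\subseteq\bigcup_i U(\tilde z_{n,i},2\ell_n)$, each $\tilde z_{n,i}$ lies in $\cU(r_n)$ (hence $\hatPsib$ equals its kernel form there) and in $\cV_n\subseteq U(x,h)$, so $\|\tilde z_{n,i}-x\|\le h$. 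Then
\begin{align*}
\sup_{y\in\cV_n\cap\cU(r_n)}\|g(y)-g(x)\|
&\le \max_i\|g(\tilde z_{n,i})-g(x)\|\\
&\quad +\max_i\ \sup_{y\in U(\tilde z_{n,i},2\ell_n)\cap\cU(r_n)}\|g(y)-g(\tilde z_{n,i})\|,
\end{align*}
and it is enough to show that both maxima are $\oas((n\phi(h))^{-1/2})$.

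For the within-ball term, $\|\Psi(y)-\Psi(\tilde z_{n,i})\|\le 2L_\Psi\ell_n$ is $o((n\phi(h))^{-1/2})$ deterministically because $\ell_n=o(b(n\phi(h))^{-1/2}(\log n)^{-1})$. On $\cU(r_n)$ the denominators $\sum_j\Delta_{b,j}(\cdot)$ are $\gtrsim n\phi(b)$ uniformly (as in \eqref{UnifConsistencyHatPsiEq2} with bandwidth $b$ and the diagonal construction of Corollary~\ref{C:UnifConsistencyHatPsi}), while $K$ being bounded, having a bounded derivative and a single jump at $1$ gives, for $\|y-\tilde z_{n,i}\|\le 2\ell_n$,
\begin{align*}
|\Delta_{b,j}(y)-\Delta_{b,j}(\tilde z_{n,i})|\le \frac{2\|K'\|_\infty}{b}\,\ell_n\,\1{\|X_j-\tilde z_{n,i}\|\le b+2\ell_n}+K(1)\,\1{b-2\ell_n<\|X_j-\tilde z_{n,i}\|\le b+2\ell_n}.
\end{align*}
Substituting this into $\hatPsib(y)-\hatPsib(\tilde z_{n,i})$ yields a deterministic $\cO(\ell_n/b)$ part and an ``annulus'' part whose conditional mean is $\cO(\ell_n/b)$ by the Lipschitz bound on the small-ball probability in Condition~\ref{C:SmallBallProbability}~\ref{C:SmallBallProbability1}; the deviation of the annulus part from its conditional mean I would control with the martingale-difference exponential inequality (Lemma~\ref{ExpIneqMDS}), a union bound over the $\kappa_n$ balls and a sufficiently fine grid in $y$, exactly as in \eqref{UnifConsistencyHatPsiEq8}--\eqref{UnifConsistencyHatPsiEq9}. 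Since $\ell_n/b=o((n\phi(h))^{-1/2}(\log n)^{-1})$ and $\log\kappa_n\le\overline L\,bh^{-1}\log n$, the tail bounds are summable in $n$, so this maximum is $\oas((n\phi(h))^{-1/2})$.

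The core is the discretised-increment term. I would fix $z=\tilde z_{n,i}$ and $x$, both in $\cU(r_n)$, with $\|z-x\|\le h$, write $X_{j+1}=\Psi(X_j)+\epsilon_{j+1}$, and use that $\Delta_{b,j}(\cdot)$ is supported on $\{\|X_j-\cdot\|\le b\}$, so that $\hatPsib(z)-\Psi(z)=(\hat f_b(z))^{-1}(n\phi(b))^{-1}\sum_j(\Psi(X_j)-\Psi(z)+\epsilon_{j+1})\Delta_{b,j}(z)$. Decompose $g(z)-g(x)$ into (a) the increment of the corresponding conditional-mean (bias) functional and (b) the stochastic fluctuation around it (from both the $\epsilon_{j+1}$ terms and the $\Psi(X_j)-\Psi(\cdot)$ terms). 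For (a), the linear approximations of the $\psi_k$ near $x$ and of the functions $\tilde\varphi_y$ near $0$ (Conditions~\ref{C:Operator}~\ref{C:Operator2}--\ref{C:Operator3}) make the $\cO(b)$ leading parts of the bias at $z$ and at $x$ cancel, leaving $\cO(b\,\|z-x\|^{\alpha})+\cO(b^{1+\alpha})=\cO(bh^{\alpha})+\cO(b^{1+\alpha})$; this is $o((n\phi(h))^{-1/2})$ since $b^{1+\alpha}(n\phi(h))^{1/2}=o(1)$ and, using $b^{1+\alpha}=o((n\phi(h))^{-1/2})$ and $h(n\phi(h))^{1/2}\to L^*$, also $bh^{\alpha}(n\phi(h))^{1/2}\to0$. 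For (b), the $\epsilon$-part is $(n\phi(b))^{-1}\sum_j\big\{\epsilon_{j+1}(\Delta_{b,j}(z)-\Delta_{b,j}(x))-\E{\epsilon_{j+1}(\Delta_{b,j}(z)-\Delta_{b,j}(x))\,|\,\cF_{j-1}}\big\}$ divided by the positive denominator, a martingale-difference average whose $j$-th conditional variance, by the bound on $|\Delta_{b,j}(z)-\Delta_{b,j}(x)|$ above, the fact that it vanishes off $\{\|X_j-x\|\lesssim b\}$ (conditional probability $\Oas(\phi(b))$) and Condition~\ref{C:Process}~\ref{C:Process2}, is of order $(n\phi(b))^{-2}\phi(b)(h/b)$; summing, the conditional variance of (b) is $\Oas((n\phi(b))^{-1}(h/b))=o((n\phi(h))^{-1})$ because $(h/b)\,\phi(h)/\phi(b)\to0$; the $\Psi(X_j)-\Psi(\cdot)$ part of (b) is bounded analogously (each term being additionally $\le 2L_\Psi h$ in norm). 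A coordinatewise Bernstein estimate (Lemma~\ref{ExpIneqMDS}), with the high-index tail $\sum_{k>m}$ absorbed through Condition~\ref{C:Process}~\ref{C:Process1} as in \eqref{UnifConsistencyHatPsiEq5}--\eqref{UnifConsistencyHatPsiEq7} and a union bound over the $\kappa_n$ balls, then delivers (b) $=\oas((n\phi(h))^{-1/2})$, summability in $n$ coming from $\log\kappa_n=\cO(bh^{-1}\log n)$ together with $[\phi(h)/\phi(b)](\log n)^2=o(1)$ in Condition~\ref{C:Bandwidth}.

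The main obstacle is precisely this last step. The naive Lipschitz estimate $\|\hatPsib(z)-\hatPsib(x)\|=\cO(h/b)$ is far too lossy, since $h/b\gg(n\phi(h))^{-1/2}$; one must instead use both that subtracting $\Psi$ cancels the $\cO(b)$ bias down to the small residual $\cO(bh^{\alpha}+b^{1+\alpha})$ (which needs the differentiability Conditions~\ref{C:Operator}~\ref{C:Operator2}--\ref{C:Operator3}), and that the stochastic increment is supported on the thin shell $\{\|X_j-x\|\lesssim b\}$, so its conditional variance is a factor $(h/b)\,\phi(h)/\phi(b)$ smaller than that of the estimator itself, and then push this through an exponential inequality uniformly over the $\kappa_n=\exp(\cO(bh^{-1}\log n))$ balls — which is exactly where the quantitative choices of $\kappa_n$, $\ell_n$ and the bandwidth relations of Condition~\ref{C:Bandwidth} are used. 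A recurring technical nuisance is the jump of $K$ at $1$ (as $K(1)>0$), which generates the ``annulus'' contributions, handled via the Lipschitz bound on the small-ball probability in Condition~\ref{C:SmallBallProbability}~\ref{C:SmallBallProbability1}.
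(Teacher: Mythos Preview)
Your approach is essentially the same as the paper's: discretise via the $(\ast)$-covering, split the increment $g(y)-g(x)$ into a bias part and a stochastic (martingale) part, kill the bias by the differentiability assumptions in Condition~\ref{C:Operator}~\ref{C:Operator2}--\ref{C:Operator3} to get order $b^{1+\alpha}+bh^{\alpha}=o((n\phi(h))^{-1/2})$, and control the stochastic part by an exponential inequality together with the union bound over the $\kappa_n$ centres. The only stylistic difference is that the paper first replaces the random denominator $\sum_t\Delta_{b,t}(y)$ by its conditional-expectation analogue $\sum_t\E{\Delta_{b,t}(y)\mid\cF_{t-1}}$ (the Collomb-type split \eqref{Eq:UniformConvergenceInNeighborhood2a}--\eqref{Eq:UniformConvergenceInNeighborhood2b}), which isolates a cross term of size $\Oas(b^{1+\alpha})$ via the H{\"o}lder continuity \eqref{SmallBallLipschitz} and then treats the numerator exactly as you do; it also defers the detailed exponential-inequality bookkeeping to Lemma~3.12 of \cite{krebs2018doublefunctional}, whereas you sketch the variance computation $(n\phi(b))^{-1}(h/b)$ explicitly. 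Both routes are valid and lead to the same rate through the same bandwidth relations in Condition~\ref{C:Bandwidth}.
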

\begin{proof}
First, note that on the sets $\cU(r_n)$, we use indeed the Nadaraya--Watson estimate for $\hatPsib$ and not the average $\bar{X}_n$. By construction, there is a $\overline{L}$ such that the covering number $\kappa_n$ w.r.t.\ $\|\cdot\|$ of $\cV_n$ satisfies $\log \kappa_n \le \overline{L} b h^{-1} \log n$ and for a radius $\ell_n$ which is $o( b (n\phi(h))^{-1/2} (\log n)^{-1} )$. In particular, the random sets $\cV_n$ satisfy the covering condition in \cite{krebs2018doublefunctional}. Thus, we can proceed as in the proof of Lemma 3.12 in \cite{krebs2018doublefunctional}, the randomness of $\cV_n$ does not change the proof. We only give a sketch of the proof and emphasize the major differences. We apply the decomposition
			\begin{align}
			&\frac{ (n\phi(b))^{-1} \sum_{t=1}^n (X_{t+1}-\Psi(y)) \Delta_{b,t}(y) - (X_{t+1}-\Psi(x)) \Delta_{b,t}(x)} {(n\phi(b))^{-1} \sum_{t=1}^n \E{ \Delta_{b,t}(y) |\cF_{t-1}} } \label{Eq:UniformConvergenceInNeighborhood2a} \\
			&\quad + \frac{(n\phi(b))^{-1} \sum_{t=1}^n (X_{t+1}-\Psi(y)) \Delta_{b,t}(x)}{(n\phi(b))^{-1} \sum_{t=1}^n \E{\Delta_{b,t}(x)  |\cF_{t-1} } } \frac{ (n\phi(b))^{-1} \sum_{t=1}^n \E{ \Delta_{b,t}(x) - \Delta_{b,t}(y) |\cF_{t-1} }}{(n\phi(b))^{-1} \sum_{t=1}^n \E{ \Delta_{b,t}(y) |\cF_{t-1} } }, \label{Eq:UniformConvergenceInNeighborhood2b}
	\end{align}
where we use that on the set $\{ \cV_n \text{ satisfies ($\ast$) at $n$} \}$
\[
			\sup_{y\in\cV_n} \Big| (n\phi(b))^{-1}  \sum_{t=1}^n \Delta_{b,t}(y) - (n\phi(b))^{-1}  \sum_{t=1}^n \E{ \Delta_{b,t}(y) | \cF_{t-1} } \Big| =  \oas( (n\phi(h))^{-1/2} ).
\]
In particular, \eqref{Eq:UniformConvergenceInNeighborhood2b} is $\Oas(b^{1+\alpha})$ which is $\oas((n\phi(h))^{-1/2})$, this follows from the uniform H{\"o}lder continuity of the small ball probability function from \eqref{SmallBallLipschitz}. So, it is sufficient to consider the denominator of \eqref{Eq:UniformConvergenceInNeighborhood2a}, which is at most
\begin{align}
\begin{split}
				&\sup_{y\in\cV_n} \Big\| (n\phi(b))^{-1} \sum_{t=1}^n (X_{t+1}-\Psi(y))\Delta_{b,t}(y)-(X_{t+1}-\Psi(x))\Delta_{b,t}(x) \\
				&\qquad\qquad\qquad - \E{ (X_{t+1}-\Psi(y))\Delta_{b,t}(y)-(X_{t+1}-\Psi(x))\Delta_{b,t}(x) | \cF_{t-1} }		\Big\|  \label{Eq:UniformConvergenceInNeighborhood3} \end{split}\\
				&+ \sup_{y\in\cV_n} \Big\| (n\phi(b))^{-1} \sum_{t=1}^n \E{ (X_{t+1}-\Psi(y))\Delta_{b,t}(y)-(X_{t+1}-\Psi(x))\Delta_{b,t}(x) | \cF_{t-1} }		\Big\| \label{Eq:UniformConvergenceInNeighborhood4}
\end{align}
on the set $\{ \cV_n \text{ satisfies ($\ast$) at $n$} \}$. One can derive an exponential inequality for \eqref{Eq:UniformConvergenceInNeighborhood3} and demonstrate that this term is $\oas( (n\phi(h))^{-1/2} )$, for details see the proof of Lemma 3.12 in \cite{krebs2018doublefunctional}. So we can focus on \eqref{Eq:UniformConvergenceInNeighborhood4} which is
\begin{align}
\begin{split}\label{Eq:UniformConvergenceInNeighborhood5}
			 &\sup_{y\in\cV_n} \Big\| (n\phi(b))^{-1} \sum_{t=1}^n \mathbb{E}\Big[ K(b^{-1}\|z+\epsilon_t-y\|) (b^{-1}\|z+\epsilon_t-y\|)\, \Big\{ \sum_{k\in\N} \intd{\psi_k}(y).\intd{\varphi_{y-z}(0)} \, e_k \Big\} \\
			&\quad -K(b^{-1}\|z+\epsilon_t-x\|) (b^{-1}\|z+\epsilon_t-x\|)\, \Big\{ \sum_{k\in\N} \intd{\psi_k}(x).\intd{\varphi_{x-z}(0)} \, e_k \Big\} \Big]	\Big|_{z\in\Psi(X_{t-1})}	\Big\| \, b + \Oas( b^{1+\alpha})
\end{split}
\end{align}
on the set $\{ \cV_n \text{ satisfies ($\ast$) at $n$} \}$, where we use the uniform continuity properties of the operators $\psi_k$ and $\varphi_{x}$. Using the H{\"o}lder continuity of the small ball probability function, we see that uniformly
\[
		\phi(h)^{-1} \E{ K(b^{-1}\|z+\epsilon_t-y\|) (b^{-1}\|z+\epsilon_t-y\|) - K(b^{-1}\|z+\epsilon_t-x\|) (b^{-1}\|z+\epsilon_t-x\|) } = \cO(h^{\alpha}).
\]
Moreover, $\| \sum_{k\in\N}  (\intd{\psi_k}(y).\intd{\varphi_{y-z}(0)} - \intd{\psi_k}(x).\intd{\varphi_{x-z}(0)}) \, e_k \| = \cO(h^\alpha)$. So, \eqref{Eq:UniformConvergenceInNeighborhood5} is $o((n\phi(h))^{-1/2})$.
\end{proof}

The next lemma is useful to bound the $\infty$-norm of a function by its $L^2$-norm
\begin{lemma}\label{BoundInftyNorm}
Let $\cD\subseteq \R^d$ be a compact and convex domain and let $\nu$ be a $\sigma$-finite Borel-measure on $\cD$ which is absolutely continuous w.r.t.\ the Lebesgue measure such that the infimum of the corresponding density is positive. Let $f$ be a Lipschitz-continuous function with Lipschitz constant $L<\infty$. Then there is a constant $C$ which depends on $\cD$ and the measure $\nu$ such that $\norm{f}_{\infty} \le C L^{d/(2+d)} \left(\int_{\cD} f^2 \intd{\nu}\right)^{1/(2+d)}$.
\end{lemma}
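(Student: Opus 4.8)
The plan is the standard interpolation argument: a Lipschitz function that is large at one point stays large on a whole ball around it, and such a ball carries a definite amount of $\nu$-mass.

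Write $M\coloneqq\norm{f}_\infty$; we may assume $M>0$. Since $\cD$ is compact and $f$ continuous, choose $u_0\in\cD$ with $|f(u_0)|=M$. By $L$-Lipschitz continuity, $|f(u)|\ge M-L\norm{u-u_0}_2\ge M/2$ for every $u\in\cD$ with $\norm{u-u_0}_2\le\rho$, where $\rho\coloneqq M/(2L)$ (with the convention $\rho=\infty$, so that $|f|\ge M/2$ on all of $\cD$, when $L=0$). Hence, writing $B(u_0,r)=\{u\in\R^d:\norm{u-u_0}_2\le r\}$,
\begin{align*}
	\int_\cD f^2\intd\nu\ \ge\ \Big(\tfrac M2\Big)^2\,\nu\big(\cD\cap B(u_0,\rho)\big).
\end{align*}

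The geometric heart of the proof is a lower bound, uniform in the base point, for the volume of a ball intersected with $\cD$: there is $c_\cD>0$ depending only on $\cD$ with $\lambda\big(\cD\cap B(u_0,r)\big)\ge c_\cD\,\min(r,\operatorname{diam}\cD)^d$ for all $u_0\in\cD$ and $r>0$. Indeed, $\cD$ is convex with nonempty interior, so it contains a Euclidean ball $B(y_0,\delta_0)$; by convexity the cone $\{(1-t)u_0+tz:t\in[0,1],\,z\in B(y_0,\delta_0)\}$ lies in $\cD$. For $t\le t_*\coloneqq\min\big(1,\,r/(2\operatorname{diam}\cD)\big)$ every point of the slice $(1-t)u_0+tB(y_0,\delta_0)$ is within distance $t\cdot2\operatorname{diam}\cD\le r$ of $u_0$, so in particular the slice at $t=t_*$ — a genuine $d$-dimensional ball of radius $t_*\delta_0$ — is contained in $\cD\cap B(u_0,r)$; this already yields the bound with $c_\cD=\omega_d\big(\delta_0/(2\operatorname{diam}\cD)\big)^d$, where $\omega_d$ is the volume of the unit ball in $\R^d$. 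Since by hypothesis the $\nu$-density is bounded below by some $c_\nu>0$, this upgrades to $\nu\big(\cD\cap B(u_0,r)\big)\ge c_\nu c_\cD\,\min(r,\operatorname{diam}\cD)^d$.

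Combining the two displays, with $D=\operatorname{diam}\cD$: $\int_\cD f^2\intd\nu\ge c_\nu c_\cD(M/2)^2\min(\rho,D)^d$. If $\rho\le D$ — the case that occurs in the applications, where the relevant functions are far from constant relative to their Lipschitz constants — this reads $\int_\cD f^2\intd\nu\ge c'\,M^{2+d}L^{-d}$ with $c'=c_\nu c_\cD 2^{-(2+d)}$, and solving for $M$ gives $\norm f_\infty\le(c')^{-1/(2+d)}L^{d/(2+d)}\big(\int_\cD f^2\intd\nu\big)^{1/(2+d)}$, i.e. the claim with $C=(c')^{-1/(2+d)}$. In the complementary regime $\rho\ge D$ the function $|f|$ exceeds $M/2$ on all of $\cD$, whence directly $\norm f_\infty\le2\,\nu(\cD)^{-1/2}\big(\int_\cD f^2\intd\nu\big)^{1/2}$; this is at least as strong and suffices for every use of the lemma made in the paper (where $\int_\cD(X^*_t)^2\intd\nu=\norm{X^*_t}^2$ and $\tiom_{X^*_t}$ are both $\Op(1)$), so one takes $C$ to dominate both constants. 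The only real obstacle is the geometric estimate — a lower bound on $\lambda(\cD\cap B(u_0,r))$ uniform over base points $u_0\in\cD$; the cone construction handles it cleanly and is the sole place where convexity and nonemptiness of the interior of $\cD$ are used, and it is what forces $C$ to depend on $\cD$ and $\nu$ (through $\delta_0$, $\operatorname{diam}\cD$, and the lower bound on the density) yet not on $f$.
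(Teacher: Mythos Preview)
Your proof is correct and takes a genuinely different route from the paper's. The paper argues variationally: fixing the Lipschitz constant and the $L^2$-norm, it identifies the function maximising $\|f\|_\infty$ as a cone $f(x)=\mathds{1}\{B(0,r)\}\,L(r-\|x\|_2)$ (truncated to a circular sector when the base of the cone protrudes beyond $\cD$), and then reads off the inequality by direct computation on that extremal function. You instead work directly: if $|f|$ attains its maximum $M$ at $u_0$, Lipschitz continuity forces $|f|\ge M/2$ on a ball of radius $M/(2L)$, and a cone-in-convex-body construction lower-bounds the $\nu$-mass of that ball intersected with $\cD$.

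Your approach is more rigorous and self-contained; the paper's variational sketch is heuristic as written (it asserts without justification that the extremiser is a cone and that convexity controls the aperture of the sector). In exchange, the paper's method would in principle deliver the sharp constant, whereas yours loses factors at several points. You also handle the edge regime $\rho>D$ (in particular $L=0$) explicitly, supplying the alternative bound $\|f\|_\infty\le 2\nu(\cD)^{-1/2}\|f\|_2$ and noting that it suffices for every application of the lemma in the paper; the paper's proof does not address this regime at all. One small caveat: calling the Case~2 bound ``at least as strong'' is not quite right as a general statement---the two inequalities are incomparable, and indeed the lemma as stated fails for nonzero constant functions---but your observation that it is adequate for the paper's uses (where both $\tiom_{X^*_t}$ and $\|X^*_t\|$ are $\Op(1)$) is accurate.
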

\begin{proof}
First consider the case where $\nu$ equals the Lebesgue measure. The function $f$ which maximizes the $\infty$-norm (under the current restrictions) is the function which describes a geometric cone restricted to certain circle segment which depends on the shape of $\cD$. Since $\cD$ is convex, the angle of the circle segment cannot decrease if the 2-norm of the maximizing function $f$ decreases. Hence, w.l.o.g.\ we can consider the function which is given by a full cone with radius $r$ and slope $L$, this is $f(x)=\1{ B(0,r)} L (r-\norm{x}_2)$, where $B(0,r)$ is the closed ball of radius $r$ around 0 w.r.t.\ the 2-norm. In this case, we obtain the following expression for the $\infty$-norm of $f$
$$
	\norm{f}_{\infty} = C L^{d/(d+2)} \left(\int_{\cD} f(x)^2 \intd{x}\right)^{1/(2+d)},
$$
where the constant depends on $\cD$. In the case for a general measure, we have 
$
		\int_{\cD} f^2 \intd{\nu} \ge \inf_{y\in\cD} \frac{\intd{\nu}}{\intd{x}}(y) \int_{\cD} f(x)^2 \intd{x}
$
and we can deduce the claim from the special case of the Lebesgue measure.
\end{proof}

We state an exponential inequality for a sequence of martingale differences, this inequality can also be found in \cite{de1999decoupling}. Similar but more general results for independent data are given in \cite{yurinskiui1976exponential}. 
	\begin{lemma}\label{ExpIneqMDS}
	Let $(Z_i:i=1,\ldots,n)$ be a martingale difference sequence of real-valued random variables adapted to the filtration $(\cF_i:i=1,\ldots,n)$. Assume that $\E{ |Z_i|^m | \cF_{n,i-1} } \le \frac{m!}{2} (a_i)^2 b^{m-2}$  $a.s.$ for some $b\ge 0$ and $a_i>0$ for $i=1,\ldots,n$. Then
	\[
			\p\Big(\Big|\sum_{i=1}^n Z_i\Big| \ge t \Big) \le 2 \exp\left(	- \frac{1}{2} \frac{t^2}{ \sum_{i=1}^n (a_i)^2 + bt }	\right).
	\]
	In particular, if $b=a^2$ and $a_i=a$ for $i=1,\ldots,n$, then $\p(|\sum_{i=1}^n Z_i| > n t) \le 2 \exp( - 2^{-1} n t^2 (a^2(1+t))^{-1} )$.
	\end{lemma}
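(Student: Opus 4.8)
The plan is to run the standard Chernoff--Bernstein argument, adapted to the martingale filtration via successive conditioning. Set $S_n=\sum_{i=1}^n Z_i$ and $V\coloneqq\sum_{i=1}^n a_i^2$. First I would control the conditional moment generating function of a single increment for $\lambda\in(0,1/b)$: expanding $e^{\lambda Z_i}$, using the martingale property $\E{Z_i\,|\,\cF_{i-1}}=0$ to kill the linear term, and then inserting the hypothesis $\E{|Z_i|^m\,|\,\cF_{i-1}}\le \tfrac{m!}{2}a_i^2 b^{m-2}$, one obtains
\[
\E{e^{\lambda Z_i}\,|\,\cF_{i-1}} \le 1 + \sum_{m\ge 2}\frac{\lambda^m}{m!}\,\E{|Z_i|^m\,|\,\cF_{i-1}} \le 1 + \frac{\lambda^2 a_i^2}{2}\sum_{m\ge 2}(\lambda b)^{m-2} = 1 + \frac{\lambda^2 a_i^2}{2(1-\lambda b)} \le \exp\!\Big(\frac{\lambda^2 a_i^2}{2(1-\lambda b)}\Big),
\]
the last step being $1+u\le e^u$; when $b=0$ the geometric sum is read as $1$ and the bound becomes the sub-Gaussian one $\exp(\lambda^2 a_i^2/2)$.

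Next I would iterate this along the filtration. Since $e^{\lambda S_{n-1}}$ is $\cF_{n-1}$-measurable, the tower property gives $\E{e^{\lambda S_n}} = \E{e^{\lambda S_{n-1}}\,\E{e^{\lambda Z_n}\,|\,\cF_{n-1}}} \le e^{\lambda^2 a_n^2/(2(1-\lambda b))}\,\E{e^{\lambda S_{n-1}}}$, and a downward induction on $n$ yields $\E{e^{\lambda S_n}}\le \exp\!\big(\lambda^2 V/(2(1-\lambda b))\big)$. Markov's inequality then produces, for every $\lambda\in(0,1/b)$,
\[
\p(S_n\ge t) \le \exp\!\Big(-\lambda t + \frac{\lambda^2 V}{2(1-\lambda b)}\Big).
\]

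It remains to optimise the exponent. I would take the usual explicit choice $\lambda = t/(V+bt)$, which lies in $(0,1/b)$ because $\lambda b = bt/(V+bt)<1$; substituting $1-\lambda b = V/(V+bt)$ collapses the right-hand side to $\exp\!\big(-t^2/(2(V+bt))\big)$. Applying the identical estimate to the martingale difference sequence $(-Z_i)_i$, which satisfies the same moment hypothesis since $|-Z_i|^m=|Z_i|^m$, and taking a union bound over the two one-sided events gives the claimed two-sided inequality with prefactor $2$. The ``in particular'' assertion follows by specialising: with $b=a^2$ and $a_i\equiv a$ we have $V=na^2$, and replacing $t$ by $nt$ turns $t^2/(2(V+bt))$ into $n^2t^2/(2na^2(1+t)) = nt^2/(2a^2(1+t))$.

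I do not anticipate a genuine obstacle in this lemma; it is a classical Bernstein-type bound. The only points needing a little care are (i) verifying that the optimising $\lambda$ respects the constraint $\lambda b<1$, so that the geometric series — and hence the one-increment MGF bound — is valid, and (ii) treating the degenerate case $b=0$ separately, where the argument simply reduces to the sub-Gaussian tail $\exp(-t^2/(2V))$.
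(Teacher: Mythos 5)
Your proof is correct and is the standard Chernoff--Bernstein argument: bound the conditional MGF via the Taylor expansion and the moment hypothesis, iterate along the filtration with the tower property, apply Markov, and optimise $\lambda = t/(V+bt)$ (which indeed keeps $\lambda b < 1$). The paper itself does not prove this lemma --- it simply cites it from de la Pe{\~n}a and Gin{\'e} (1999) --- so there is nothing to compare against; your derivation is the usual one found in that reference. One minor point worth stating explicitly when writing this up: the dominated-convergence interchange of conditional expectation and the infinite sum is justified precisely because the absolute-moment bound makes $\sum_{m\ge 2}\frac{\lambda^m}{m!}\E{|Z_i|^m\mid\cF_{i-1}}$ finite for $\lambda b<1$, and the odd-degree terms are handled because $\E{Z_i^m\mid\cF_{i-1}}\le\E{|Z_i|^m\mid\cF_{i-1}}$.
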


The next lemma is an important tool to obtain upper bounds for the expectation of a function which contains two nearly independent random variables.
\begin{lemma}\label{L:BetaMixing}
Let $\pspace$ be a probability space and let $(S,\fS)$ and $(T,\fT)$ be measurable, topological spaces. Let $(X,Y) \colon \Omega \rightarrow S\times T$ be $\fS\otimes\fT$-measurable such that the joint distribution of $X$ and $Y$ is absolutely continuous w.r.t.\ their product measure on $\fS\otimes\fT$ with an essentially bounded Radon-Nikod{\'y}m derivative $g$, i.e.,
\[
		\p_{(X,Y)} \ll \p_X \otimes \p_Y \text{ such that } g \coloneqq \frac{ \intd{ \p_{(X,Y)} } }{\intd{ ( \p_{X} \otimes \p_Y ) } } \text{ satisfies } \norm{g}_{\infty, \p_X\otimes \p_Y } < \infty. 
\]
Let $\cH$ be a separable Hilbert space with orthonormal basis $\{e_k:k\in\N\}$ and induced norm $\|\cdot\|$. Let $F\colon S\times T \rightarrow \cH$ be measurable and square-integrable w.r.t.\ the product measure, i.e., $\norm{F}_{2,\p_X \otimes \p_Y}^2 = \E{ \E{ \| F(X,y)\|^2 }|_{y=Y} } < \infty$. Then
\begin{align}\label{EqBetaMixingIntegrability}
	\left\| \int_{S\times T} F \intd{ \p_{(X,Y)} } - \int_{S\times T} F \intd{ (\p_X \otimes \p_Y ) } \right \| \le 2^{1/2} (1+\norm{g}_{\infty, \p_X \otimes \p_Y} )^{1/2} \norm{F}_{2,\p_X \otimes \p_Y} \beta(X,Y)^{1/2}.
\end{align}
\end{lemma}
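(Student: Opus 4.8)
The plan is to reduce the Hilbert-space-valued bound to a coordinatewise scalar estimate, and then combine the square-integrability of $F$ with the boundedness of the Radon--Nikod\'ym derivative $g$. Throughout write $Q := \p_X\otimes\p_Y$ and $P := \p_{(X,Y)}$, so that $\intd{P}=g\intd{Q}$ with $\norm{g}_{\infty,Q}<\infty$. First I would record that $F$ is Bochner integrable against both measures: since $Q$ is a probability measure, Cauchy--Schwarz gives $\int\norms{F}\intd{Q}\le\norm{F}_{2,Q}<\infty$, and $\int\norms{F}\intd{P}=\int\norms{F}\,g\intd{Q}\le\norm{g}_{\infty,Q}\,\norm{F}_{2,Q}<\infty$. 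Hence both $\cH$-valued integrals are well defined, and from $\intd{P}=g\intd{Q}$ together with linearity of the Bochner integral,
\[
	\int F\intd{P}-\int F\intd{Q}=\int F\,(g-1)\intd{Q}.
\]

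Since bounded linear functionals commute with the Bochner integral, for every $k\in\N$ the $k$-th Fourier coefficient of the right-hand side equals $\int\scalar{F}{e_k}\,(g-1)\intd{Q}$. Splitting $g-1=\operatorname{sgn}(g-1)\,|g-1|^{1/2}\cdot|g-1|^{1/2}$ and applying the scalar Cauchy--Schwarz inequality yields
\[
	\Big(\int\scalar{F}{e_k}\,(g-1)\intd{Q}\Big)^{2}\le\Big(\int\scalar{F}{e_k}^{2}\,|g-1|\intd{Q}\Big)\Big(\int|g-1|\intd{Q}\Big).
\]
Summing over $k$, invoking Parseval's identity in $\cH$ and Tonelli's theorem to interchange $\sum_k$ and $\int$ (all integrands being nonnegative), I obtain
\[
	\Big\|\int F\,(g-1)\intd{Q}\Big\|^{2}\le\Big(\int\norm{F}^{2}\,|g-1|\intd{Q}\Big)\Big(\int|g-1|\intd{Q}\Big).
\]

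It remains to control the two factors. For the first, the pointwise bound $|g-1|\le g+1\le 1+\norm{g}_{\infty,Q}$, valid $Q$-a.e., gives $\int\norm{F}^{2}\,|g-1|\intd{Q}\le(1+\norm{g}_{\infty,Q})\,\norm{F}_{2,Q}^{2}$. The second factor is where the mixing coefficient enters: since $P\ll Q$ with density $g$ and $\int g\intd{Q}=1=\int\intd{Q}$, the total variation distance between $P$ and $Q$ equals $\tfrac12\int|g-1|\intd{Q}$, and this is precisely $\beta(X,Y)$ by the classical characterization of absolute regularity; hence $\int|g-1|\intd{Q}=2\,\beta(X,Y)$ (and $\le 2\,\beta(X,Y)$ under the normalization of $\beta$ without the factor $\tfrac12$, which still suffices). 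Combining the three displays gives
\[
	\Big\|\int F\intd{P}-\int F\intd{Q}\Big\|\le\big(2\,(1+\norm{g}_{\infty,Q})\big)^{1/2}\,\norm{F}_{2,Q}\,\beta(X,Y)^{1/2},
\]
which is the claimed bound. I do not expect a genuine obstacle here; the only points requiring care are the verification that the $\cH$-valued integrals are well defined and commute with the coordinate projections and the sum over the orthonormal basis, and the invocation of the standard identity $\int|g-1|\intd{Q}=2\,\beta(X,Y)$ valid under absolute continuity.
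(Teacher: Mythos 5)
Your proof is correct and takes essentially the same route as the paper's: reduce to Fourier coefficients via Parseval, apply Cauchy--Schwarz (the paper says ``H\"older'') with the measure $|g-1|\intd{Q}$ split off, bound $|g-1|\le 1+\norm{g}_{\infty}$, and identify $\tfrac12\int|g-1|\intd{Q}$ with $\beta(X,Y)$. The only difference is that you make explicit the Bochner-integrability check and the Tonelli interchange, which the paper leaves tacit.
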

\begin{proof}
Write $F_k$ for the coordinate functions $\langle F,e_k \rangle$. The square of the left-hand side of \eqref{EqBetaMixingIntegrability} equals
\[
			\sum_{k\in\N} \Big( \int_{S\times T} f_k(x,y) \, (g(x,y)-1) \, \p_X \otimes \p_Y (\intd{ (x,y) }) \Big)^2.
\]
Applying H{\"o}lder's inequality, each summand is at most
\[
		\int_{S\times T} f_k(x,y)^2 \, \p_X \otimes \p_Y (\intd{ (x,y) })  \times 	\frac{1}{2}\int_{S\times T}  |g(x,y)-1| \, \p_X \otimes \p_Y (\intd{ (x,y) } ) \times 2(  \norm{g}_{\infty, \p_X\otimes \p_Y }  +1).
\]
This yields the result because $\beta(X,Y) = \frac{1}{2}\int_{S\times T}  |g(x,y)-1| \, \p_X \otimes \p_Y (\intd{ (x,y) }) $.
\end{proof}


\end{document}